\newcommand{\bn}{\bold{n}}
\newcommand{\bx}{\boldsymbol{x}}
\newcommand{\by}{\boldsymbol{y}}
\newcommand{\br}{\boldsymbol{r}}
\newcommand{\uref}{u^{\text{ref}}}
\newcommand{\bol}{\boldsymbol}
\newcommand{\ney}{\boldsymbol{y}}
\newcommand{\nex}{\boldsymbol{x}}
\newcommand{\nez}{\boldsymbol{z}}
\newcommand{\ner}{\bol{r}}
\newcommand{\de}{\,\mathrm{d}}
\newcommand{\e}{\operatorname{e}}
\newcommand{\inc}{\mathrm{inc}}
\newcommand{\andtext}{\quad\mbox{and}\quad}
\newcommand{\p}{\partial}
\newcommand{\uinc}{u_{\mathrm{inc}}}
\newcommand{\lf}{\left}
\newcommand{\rg}{\right}
\newcommand{\R}{\mathbb{R}}
\newcommand{\C}{\mathbb{C}}
\newcommand{\nor}{\bold n}
\newtheorem{theorem}{Theorem}[section]
\newtheorem{lemma}[theorem]{Lemma}
\newtheorem{remark}[theorem]{Remark}
\newtheorem{definition}[theorem]{Definition}
\newcommand{\half}{\scalebox{0.9}{$\dfrac{1}{2}$}}
\newcommand{\Dcal}{\mathcal{D}}
\newcommand{\Gcal}{\mathcal{G}}
\newcommand{\Hcal}{\mathcal{H}}
\newcommand{\Lcal}{\mathcal{L}}
\newcommand{\Mcal}{\mathcal{M}}
\newcommand{\Ocal}{\mathcal{O}}
\newcommand{\Pcal}{\mathcal{P}}
\newcommand{\Qcal}{\mathcal{Q}}
\newcommand{\Scal}{\mathcal{S}}
\newcommand{\Wcal}{\mathcal{W}}
\title{General-purpose kernel regularization of boundary integral equations via density interpolation}
 \author[1]{Luiz M. Faria\thanks{luiz.faria@ensta.fr  (corresponding author)}}
 \affil[1]{\small{POEMS (CNRS-ENSTA-INRIA), ENSTA Paris, Palaiseau, France}}
 \author[2]{Carlos P\'erez-Arancibia\thanks{cperez@mat.uc.cl} \thanks{This work was supported by FONDECYT under Grant 11181032.}}
 \affil[2]{\small{Institute for Mathematical and Computational Engineering, Pontificia Universidad Cat\'olica de Chile}}
\author[1]{Marc Bonnet\thanks{mbonnet@ensta.fr}}
\date{\today}
\begin{document}

\maketitle

\begin{abstract}
  This paper presents a general high-order kernel regularization technique applicable to all four integral operators of Calder\'on calculus associated with linear elliptic PDEs in two and three spatial dimensions. Like previous density interpolation methods, the proposed technique relies on interpolating the density function around the kernel singularity in terms of solutions of the underlying homogeneous PDE, so as to recast singular and nearly singular integrals in terms of bounded (or more regular) integrands. We present here a simple interpolation strategy which, unlike previous approaches, does not entail explicit computation of high-order derivatives of the density function along the surface. Furthermore, the proposed approach is kernel- and dimension-independent in the sense that the sought density interpolant is constructed as a linear combination of point-source fields, given by the same {Green's function} used in the integral equation formulation, thus making the procedure applicable, in principle, to any PDE with known {Green's function}. For the sake of definiteness, we focus here on Nystr\"om methods for the (scalar) Laplace and Helmholtz equations and the (vector) elastostatic and time-harmonic elastodynamic equations. The method's accuracy, flexibility, efficiency, and compatibility with fast solvers are demonstrated by means of a variety of large-scale three-dimensional numerical examples.\enlargethispage*{5ex}
\end{abstract}

\textbf{Keywords}: Boundary integral equations, Nystr\"om methods, singular integrals. \\



\section{Introduction}

As is well-known linear elliptic partial differential equations (PDEs) can be
recast in the form of boundary integral equations (BIEs) which can be solved
numerically provided the associated PDE's {free-space or domain-specific Green's
function} is known. While BIE methods offer several advantages over classical
volume discretization techniques, such as finite difference and element methods,
they are also more difficult to understand and implement. The added difficulties
largely stem from the fact that integral operators arising in BIE formulations
give rise to challenging numerical integration problems associated with the
presence of nearly-singular, weakly-singular, singular and hypersingular
kernels. This makes the robust and accurate evaluation of these integral
operators over arbitrary curves or surfaces, a persistent challenge, and the
choice of the ``singularity integration" technique constitutes a fundamental
part of any BIE solution method. Depending on the choice of discretization
method (e.g. Galerkin boundary element method (BEM), Nystr\"om or collocation
BEM) applied to the governing BIE, several techniques are available for dealing
with the singular or nearly singular integrals, such as semi-analytical
methods~\cite{klockner2013quadrature,lenoir:salles:12,beale2001method,Barnett:2014tq,wala2019fast,helsing2008evaluation}, singularity
substraction~\cite{caorsi1993theoretical,graglia1993numerical,jarvenpaa2006singularity,Jarvenpaa:2003cs,wilton1984potential,YlaOijala:2003bn},
Duffy-like
transformations~\cite{duffy1982quadrature,Sauter:2001iw,Sauter:1996iw,reid2015generalized,hackbusch1993efficient},
polar singularity
cancellation~\cite{Bruno:2001ima,Hackbusch:1994tq,schwab1992numerical,bremer2012nystrom,ying2006high},
singularity extraction~\cite{Schulz:1998kl,Schwab:bj}, spectral methods for smooth surfaces represented using spherical harmonics~\cite{ganesh2004high,gimbutas2013fast} and direct
methods~\cite{gui:92,gui:98}. Despite this plethora of options, each of which
targets specific integration problems associated to specific methods, the
``singularity integration" problem remains an active research area due to its
importance and the lack of a universal and robust approach to handle these
integrals. We refer the reader to~\cite{HDI3D,perez2019planewave} for a more
in-depth review of the various existing methods.\enlargethispage*{1ex}

A novel class of singular integral evaluation methods, namely density interpolation methods (DIMs), has been recently put forth in a series of papers addressing Laplace~\cite{HDI3D,gomez2020regularization}, Helmholtz~\cite{plane-wave:2018,perez2019planewave}, and Maxwell equations~\cite{perez2019planewave2}. DIMs are semi-analytical kernel regularization techniques that rely on interpolations of the density function at singular and nearly-singular points on the surface. The density interpolant is devised in the form of a linear combination of solutions of the underlying homogeneous PDE. This density interpolant is then combined with Green's representation formula to recast the integral operators and layer potentials in terms of ``regularized'' operators with integrands whose smoothness at the singular/nearly-singular points is directly controlled by the interpolation order. The aforementioned density interpolant is sought so that it closely mimics a truncated Taylor series on the surface, in the sense that tangential derivatives of the interpolant at the singular/nearly-singular point match
the corresponding derivatives of the density, which are computed numerically in the case of Nystr\"om methods~\cite{plane-wave:2018,HDI3D,perez2019planewave} and analytically in the case of low-order BEM on triangular meshes~\cite{perez2019planewave,perez2019planewave2}. Higher-order Nystr\"om DIMs, in particular, require numerical computation of high-order derivatives of both the density and the parametrization of the curve/surface. Naturally, the accuracy of these tangential derivatives becomes increasingly more sensitive to the quality and smoothness of the surface parametrization as the derivative order increases, thus limiting the accuracy and applicability of these methods when applied to engineering problems involving complex three-dimensional surfaces. DIMs can be viewed as the generalization and systematization of indirect regularization approaches based on integral identities satisfied by simple (constant, affine...) solutions of the underlying zero-frequency PDE, which appeared early in the development stages of the BEM~\cite{RIZZO1967,riz:85,kri:92,B-1992-3} and underlie the exposition in~\cite{bonnet1999boundary}.

This paper presents a new density interpolation approach that tackles in a unified manner the nearly singular, weakly singular, strongly singular, and hypersingular surface integrals arising in BIE formulations of linear elliptic PDEs. The proposed DIM relies on a novel density interpolant whose construction does not entail evaluation of tangential derivatives. Instead, the desired Taylor-interpolation property is approximately achieved by simply matching the density and a linear combination of solutions of the homogeneous PDE at an appropriately selected set of surface nodes at and around the singular/nearly-singular point. By contrast with previously-proposed DIMs, which target specific linear elliptic PDEs, we present here a general methodology where the interpolant is sought as a linear combination of point-source fields given in terms of the {Green's function} involved in the BIE formulation of the PDE. Relying on both higher-order curved triangular meshes~\cite{geuzaine2009gmsh}, non-overlapping quadrilateral patch manifold representation of surfaces~\cite{bruno2018chebyshev,HDI3D}, and local grids corresponding to high-order quadrature rules, we demonstrate through a variety of numerical examples, that the proposed DIM yields a high-order accurate Nystr\"om method for BIEs for the Laplace, Helmholtz, elastostatic, and time-harmonic elastodynamic equations.

The structure of this paper is as follows. Section~\ref{sec:prelim} introduces the notation and presents the four PDEs considered in this paper. Next, Section~\ref{sec:dens-interp} provides a comprehensive description of the proposed density interpolation technique. The details on the construction of the density interpolant functions using point-source fields, together with the algorithmic considerations, are given in Section~\ref{sec:numer-discr}, with supporting error estimates derived in Section~\ref{sec:error_analysis}. Section~\ref{sec:numerics}, finally, presents a variety of numerical examples in two and three spatial dimensions for both scalar and vector problems, some of them involving large-scale models and $\Hcal$-matrix compression.\enlargethispage*{3ex}

\section{Preliminaries}\label{sec:prelim}
We let $\Omega \subset \R^d$, $d=2,3$, be an open and bounded domain with smooth
boundary $\Gamma = \partial \Omega$ which is, for the time being, assumed to be
of class $C^2$. Letting $a$,~$b$ and $f$ be given functions defined on the
boundary $\Gamma$, we let $u: \Omega \to \C^\sigma$ be the solution of the
interior boundary value problem
\begin{subequations}\begin{align}
    \Lcal u(\br)                                  & = 0       & \br \mbox{ in } \Omega,  \\
    a(\bx) (\gamma_0 u)(\bx) + b(\bx)(\gamma_1 u)(\bx) & = f(\bx)  & \bx \mbox{ in } \Gamma,
  \end{align}\label{eq:BVPs}\end{subequations}
where $\Lcal$ stands for either of the following linear differential operators:
\begin{align}
  \label{eq:considered-PDEs}
  \Lcal u =
  \begin{cases}
    -\Delta u                                                                 & \quad\mbox{(Laplace, $\sigma=1$)},       \\
    -\Delta u - (\omega/c)^2u                                                 & \quad\mbox{(Helmholtz, $\sigma=1$)},     \\
    -\mu \Delta u - (\mu + \lambda) \nabla (\nabla \cdot u)                   & \quad\mbox{(elastostatic, $\sigma=d$)},  \\
    -\mu \Delta u - (\mu + \lambda) \nabla (\nabla \cdot u) - \omega^2 \rho u & \quad\mbox{(elastodynamic, $\sigma=d$)},
  \end{cases}
\end{align}
involving the wave velocity $c$ of an acoustic medium, the mass density $\rho$ and Lam\'e's first and second parameters $\lambda,\mu$ of an elastic medium, and a prescribed angular frequency $\omega$. 
\begin{subequations}
The interior Dirichlet and Neumann trace operators $\gamma_0^-$ and $\gamma_1^-$, and their exterior counterparts $\gamma_0^+$ and $\gamma_1^+$, are defined for sufficiently smooth fields $u$ defined in a neighborhood of $\Gamma$ as the limits
\begin{align}
    (\gamma_0^\pm u)(\bx) & := \displaystyle\lim_{\varepsilon \to 0^+} u(\bx \pm \varepsilon \bn(\bx)), \\
    (\gamma_1^\pm u)(\bx) & :=
    \begin{cases}
      \displaystyle\lim_{\varepsilon \to 0^+} \nabla u(\bx \pm \varepsilon \bn(\bx)) \cdot \bn(\bx) & \mbox{for Laplace/Helmholtz},                           \\
      \displaystyle\lim_{\varepsilon \to 0^+}\left\{\lambda \left(\nabla \cdot u(\bx \pm \varepsilon \bn(\bx))\right)\bn(\bx) +2 \mu \nabla u(\bx \pm \right. \\ \left.\varepsilon \bn(\bx)) \cdot \bn(\bx)
      +\mu \bn(\bx) \times \left(\nabla \times u(\bx \pm \varepsilon \bn(\bx))\right)\,\right\}     & \mbox{for elastostatics/elastodynamics},
    \end{cases}
\end{align}
\end{subequations}
(where, as usual,  $\bn(\bx)$ denotes the outward unit normal to $\Omega$ at $\bx\in\Gamma$) and extended by density to suitable Sobolev spaces, see e.g.~\cite{Mclean2000Strongly}. Throughout this article, $\gamma_0$ and $\gamma_1$ will refer to the interior traces if $\Omega$ is bounded, as in problem~\eqref{eq:BVPs}, or to the exterior traces if $\mathbb{R}^d\setminus\Omega$ is bounded, as in most examples of Section~\ref{sec:numerics}.

As is known~\cite{Mclean2000Strongly}, Green-like integral representation formulae can be derived for all boundary value problems of the form~\eqref{eq:BVPs}. Indeed, letting $G$ denote the {free-space Green's function} associated to the operator~$\Lcal$, the following integral representation formula holds:
\begin{equation}
  u(\br) = \Scal[\gamma_1 u](\br) - \Dcal[\gamma_0u](\br) \quad \mbox{for} \quad \br \in \Omega,
  \label{eq:greens-representation}\end{equation}
where the single- and double-layer potentials above are defined for a given density function $\varphi$~as
\begin{subequations}
  \begin{eqnarray}
    \Scal[\varphi](\br) & := &\int_\Gamma {G}(\br, \by)\varphi(\by) \de s(\by),                                 \\
    \Dcal[\varphi](\br) & :=& \int_\Gamma \left(\gamma_{1,\by}{G}(\br, \by)\right)^\top\varphi(\by) \de s(\by),
  \end{eqnarray}
  \label{eq:potentials}
\end{subequations}
respectively. In~\eqref{eq:potentials}, $\gamma_{1,\by}$ denotes the operator $\gamma_1$ applied with respect to the $\by$ variable and acting in the elastic case on each column of the tensor function $G$. Furthermore, taking the traces $\gamma_0u$ and $\gamma_1 u$ in~\eqref{eq:greens-representation} we arrive at
\begin{subequations}
  \label{eq:greens-formulas}
  \begin{align}
    \label{eq:greens-formula-1}
    \half\gamma_0 u(\bx) & = S[\gamma_1 u](\bx) - K[\gamma_0 u](\bx) \andtext                               \\
    \label{eq:greens-formula-2}
    \half\gamma_1 u(\bx) & = K'[\gamma_1 u](\bx) - T[\gamma_0 u](\bx) \quad\mbox{for} \quad \bx \in \Gamma,
  \end{align}
\end{subequations}
where $S,K,K'$ and $T$ are, respectively, the single-layer, double-layer, adjoint double-layer,
and hypersingular operators. Their definition in terms of $G$ (and for a sufficiently regular density $\varphi$) are:
\begin{equation}
  \label{eq:BIOS}
  \begin{split}
    S[\varphi](\bx)  & := \int_\Gamma G(\bx,\by) \varphi(\by) \de s(\by)                                                             \\
    K[\varphi](\bx)  & := {\rm p.v.}\! \int_\Gamma\left(\gamma_{1,\by}G(\bx,\by)\right)^\top
    \varphi(\by) \de s(\by)                                                                                                          \\
    K'[\varphi](\bx) & := {\rm p.v.}\! \int_\Gamma \left(\gamma_{1,\bx}G(\bx,\by)\right)\varphi(\by) \de s(\by)                      \\
    T[\varphi](\bx)  & := {\rm f.p.}\! \int_\Gamma \gamma_{1,\bx}\left(\gamma_{1,\by}G(\bx,\by)\right)^\top \varphi(\by) \de s(\by),
  \end{split}
\end{equation}
where p.v.~and f.p.~indicate Cauchy principal-value and Hadamard finite-part integrals, respectively. The explicit definitions of the {free-space Green's functions} used in~\eqref{eq:potentials} and~\eqref{eq:BIOS} are provided in~Appendix~\ref{app:free_space_Green}.

Table~\ref{tab:singularity-types} presents a summary of the singularities occurring in each kernel of the integral operators defined in~\eqref{eq:BIOS}. We note that in the scalar cases $(\sigma=1)$, the double-layer and adjoint double-layer operators are in fact given (for sufficiently smooth surfaces) in terms of integrable kernels, so that the principal value is not needed in their definition. This summary makes it clear that the integral operators~\eqref{eq:BIOS} have integrands with various degrees of singularity. Quadrature rules designed for smooth integrands cannot be expected to provide a good approximation of such integrals.

\begin{table}
  \centering
  \begin{tabular}{c  c  c  c  c  c }
    \toprule
    $\Lcal$                               & $d$ & $G(\bx, \by )$        & $\gamma_{1,\by}G(\bx,\by)$ & $\gamma_{1,\bx}G(\bx,\by)$ & $\gamma_{1,\bx} \gamma_{1,\by}G(\bx,\by)$ \\
    \midrule
    \multirow{2}{*}{Laplace/Helmholtz}          & 2 & $\Ocal(\log R)$ & $\Ocal(1)$      & $\Ocal(1)$      & $\Ocal(R^{-2})$                     \\

                                                & 3 & $\Ocal(R^{-1})$ & $\Ocal(R^{-1})$ & $\Ocal(R^{-1})$ & $\Ocal(R^{-3})$                     \\
    \midrule
    \multirow{2}{*}{Elastostatic/Elastodynamic} & 2 & $\Ocal(\log R)$ & $\Ocal(R^{-1})$ & $\Ocal(R^{-1})$ & $\Ocal(R^{-2})$                     \\

                                                & 3 & $\Ocal(R^{-1})$ & $\Ocal(R^{-2})$ & $\Ocal(R^{-2})$ & $\Ocal(R^{-3})$                     \\
    \bottomrule
  \end{tabular}
  \caption{Asymptotic behavior as $R = |\bx - \by| \to 0$ ($\bx,\by\in\Gamma$) of the kernels associated with the integral operators defined in~\eqref{eq:BIOS} corresponding to the differential operators introduced in~\eqref{eq:considered-PDEs}.}
  \label{tab:singularity-types}
\end{table}

\begin{remark}
  The developments to follow focus for expository convenience on interior boundary value problems. However, integral equations and representations similar to~\eqref{eq:greens-representation} and~\eqref{eq:greens-formulas} are applicable to exterior problems (i.e., $\Lcal u=0$ for $\br \in \R^d \setminus \overline{\Omega}$) provided a suitable condition is imposed as $|\br| \to \infty$, namely decay conditions (for the zero-frequency cases) or radiation conditions of Sommerfeld~\cite{COLTON:2012} or Kupradze~\cite{kup:79} types (for the time-harmonic cases). The methods proposed in this work therefore apply with minor modifications to exterior problems, and are in fact mostly demonstrated on exterior problems in the examples of~Section~\ref{sec:numerics}.
\end{remark}

\section{Density interpolation}\label{sec:dens-interp}

This section presents a general density interpolation approach for the regularization of the various singular kernels arising in the integral operators defined in~\eqref{eq:BIOS}. The proposed approach enables the integral operators to be recast as ``regularized'' boundary integrals to which elementary quadrature rules are directly applicable and yield high-order convergence.

\subsection{Density interpolant and kernel-regularized boundary integral operators}

We begin by formalizing the definition of a (high-order) density interpolant, and then showing how it can be used to regularize the singular integral operators defined in~\eqref{eq:BIOS}:
\begin{definition}[Density Interpolant]  \label{def:density-interpolant}
  Given a surface  density $\varphi : \Gamma \to \C^\sigma$, the function $\Phi(\br;\bx) : \Omega \times \Gamma \to \C^\sigma$ is said to be a density $(\alpha,\beta)$-interpolant of degree $\varrho\geq 0$
  if  for a given $\nex\in\Gamma$ it satisfies the conditions:
  \begin{equation}\begin{aligned}
      &\text{(a) \ }& \Lcal\Phi(\br;\bx) & = 0 \quad \mbox{for} \quad \br \in \Omega, \\
      &\text{(b) \ }& \gamma_0\Phi(\by;\bx)    & = \alpha \varphi(\bx) + \Ocal(|\bx-\by|^{\varrho+1})\quad \mbox{and} \\
      &\text{(c) \ }& \gamma_1\Phi(\by;\bx)    & = \beta \varphi(\bx) + \Ocal(|\bx-\by|^{\varrho+1})\quad\mbox{as}\quad \Gamma\ni\ney\to\nex\in\Gamma.
    \end{aligned}\label{eq:hom_pde}
    \end{equation}
\end{definition}
For a sufficiently smooth surface $\Gamma$ that locally corresponds to the graph of a smooth function $\chi:\R^{d-1}\to\R^d$ in a neighborhood of $\nex$, it follows by Taylor's theorem that conditions~(\ref{eq:hom_pde}b) and~(\ref{eq:hom_pde}c) are satisfied if
\begin{equation}
  \begin{aligned}
    \partial^\theta\lf( \gamma_0\Phi(\cdot;\bx)\circ \chi \rg) & = \alpha \partial^\theta \lf(\varphi\circ\chi\rg) \andtext \\
    \partial^\theta \lf(\gamma_1\Phi(\cdot;\bx)\circ \chi\rg)  & = \beta \partial^\theta \lf(\varphi\circ\chi\rg)
  \end{aligned} \label{eq:interpolation-condition-taylor}%
\end{equation}
at $\chi^{-1}(\nex) \in\R^{d-1}$, for all $|\theta| \leq \varrho$, where, utilizing the standard multi-index notation, $\partial^\theta $ denotes the derivative $\p^{\theta_1}\p^{\theta_2}\cdots\p^{\theta_{d-1}}$ of order $|\theta| = \theta_1\!+\!\theta_2\!+\!\cdots\!+\!\theta_{d-1}$ with respect to the parameter space variables. We note that in all previous works on DIMs, the construction of the interpolant was achieved by imposing conditions \eqref{eq:interpolation-condition-taylor}, where the required (high-order) surface derivatives were computed in spectrally accurate fashion using a Chebyshev grid.

According to Definition~\ref{def:density-interpolant}, the interpolants $\{\Phi(\cdot;\nex)|\,\nex\in\Gamma\}$ form a parametric family of solutions of the associated homogeneous PDE such that for every $\bx \in \Gamma$ the Dirichlet and Neumann traces of $\Phi(\cdot;\nex)$ provide a local and high-order approximation of $\varphi$ (up to scaling factors $\alpha$ and $\beta$) on the surface. Definition~\ref{def:density-interpolant} formalizes and extends the concept of density interpolant introduced in previous works~\cite{plane-wave:2018,HDI3D,perez2019planewave}. For example, arbitrarily higher-order $(\alpha,\beta)$-interpolants for the Laplace equation are developed in~\cite{HDI3D} where a basis of harmonic polynomials is used to numerically construct them.

The simplest example of a density interpolant is perhaps the lowest-order interpolant $\Phi(\br;\bx) = \varphi(\bx)$ for the Laplace equation, long used by the BEM community. It is not difficult to verify that such a $\Phi$ is in fact a $(1,0)$-interpolant of degree $\varrho=0$, as per Definition~\ref{def:density-interpolant}. A more general class of $(1,1)$-interpolants of order $\varrho=0$ have been put forth by D. Chan and collaborators for the kernel regularization of direct Laplace~\cite{sun2014robust,Klaseboer:2012ks}, Stokes flow~\cite{Klaseboer:2012ks}, elasticity~\cite{klaseboer2019helmholtz} and Helmholtz~\cite{sun2015boundary} boundary integral equation formulations, the latter having then been recently applied to electromagnetic scattering problems~\cite{klaseboer2016nonsingular}. The low-order character of this class of regularization techniques makes them aplicable only to weakly-singular integral operators enabling them to be recast in terms of bounded but not differentiable kernels on which elementary quadrature rules render limited accuracy. Finally, we mention that a $(1,0)$-interpolant of order $\varrho=1$ can be found in~\cite[section 5.3]{bonnet1999boundary} for the regularization of the challenging elastostatic/elastodynamic hypersingular kernels, which is constructed by means of solutions associated to rigid-body rotation and translation. This kernel-regularization approach expresses hypersingular elasticity operators in terms of weakly singular integrals for which a variety of specialized procedures exist for their accurate numerical evaluation,~e.g.~\cite{bruno2018chebyshev,Sauter2010}. 

The following lemma, which forms the basis for density interpolation techniques, is an immediate consequence of the boundary integral identities~\eqref{eq:greens-formulas} applied to the density interpolant and the local properties~(\ref{eq:hom_pde}b) and~~(\ref{eq:hom_pde}c) of the interpolant:
\begin{lemma}[Regularized integral operators]\label{lm:reg}
  Let $\alpha,\beta \in \C$ and consider the following linear combinations of integral operators
  \begin{equation}\label{eq:reg_int_op}
    \begin{aligned}
      V_{\alpha,\beta}[\varphi](\bx) & := \alpha K[\varphi](\bx) - \beta S[\varphi](\bx),  & \\
      W_{\alpha,\beta}[\varphi](\bx) & := \alpha T[\varphi](\bx) - \beta K'[\varphi](\bx), &
    \end{aligned}\end{equation}
  for $\nex\in\Gamma$, where $S$, $K$, $K'$ and $T$ are defined in~\eqref{eq:BIOS},  and let $\Phi$ be a $(\alpha,\beta)$-interpolant of order $p\geq 0$, as per Definition~\ref{def:density-interpolant}. Then, using identities~\eqref{eq:greens-formulas} applied to $\Phi$, they can be recast as
  \begin{equation}\begin{aligned}
      &\text{(a) \ }& V_{\alpha,\beta}[\varphi](\bx) & = K[\alpha \varphi(\cdot) - \gamma_0\Phi(\cdot;\bx)](\bx) - S[\beta \varphi(\cdot) - \gamma_1\Phi(\cdot;\bx)](\bx) - \half\gamma_0\Phi(\bx;\bx),  \\
      &\text{(b) \ }& W_{\alpha,\beta}[\varphi](\bx) & = T[\alpha \varphi(\cdot) - \gamma_0\Phi(\cdot;\bx)](\bx) - K'[\beta \varphi(\cdot) - \gamma_1\Phi(\cdot;\bx)](\bx) - \half\gamma_1\Phi(\bx;\bx).
    \end{aligned}\label{eq:reg-comb-integral-op}\end{equation}
  Furthermore, $\Phi$ being an $(\alpha,\beta)$-interpolant of order $\varrho\geq 0$, we have that
  \begin{equation}\begin{aligned}
      \gamma_0 \Phi(\by;\bx) - \alpha\varphi(\bx) &= \Ocal(|\bx -\by|^{\varrho+1})\andtext\\
      \gamma_1 \Phi(\by;\bx) - \beta\varphi(\bx) &= \Ocal(|\bx - \by|^{\varrho+1}),
    \end{aligned}\end{equation}
as $\nex\to\ney$ and thus the surface  integrands in~\eqref{eq:reg-comb-integral-op} are more regular than those of~\eqref{eq:reg_int_op}. In particular, if
$\gamma_1G(\bx,\by) = \Ocal(|\bx - \by|^{-q_1})$ and $\gamma_0
  G(\bx,\by) = \Ocal(|\bx-\by|^{-q_0})$ as $\by \to \bx$, then the corresponding
integral operators in~(\ref{eq:reg-comb-integral-op}a) are defined in terms of integrands
that behave as $\Ocal(|\bx - \by|^{\varrho-q_1+1})$ and
$\Ocal(|\bx-\by|^{\varrho-q_0+1})$ as $\by \to \bx$. Similarly, if
$\gamma_{1,\bx} G(\bx,\by) = \Ocal(|\bx - \by|^{-q_2})$ and $\gamma_{1,\bx}\gamma_{1,\by}
G(\bx,\by) = \Ocal(|\bx-\by|^{-q_3})$ as $\by \to \bx$, then the corresponding
integral operators in~(\ref{eq:reg-comb-integral-op}b) are defined in terms of integrands
that satisfy $\Ocal(|\bx - \by|^{\varrho-q_2+1})$ and
$\Ocal(|\bx-\by|^{\varrho-q_3+1})$ as $\by \to \bx$.
  \label{lem:regularized-sldl}
\end{lemma}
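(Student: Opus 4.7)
The plan is to exploit the fact that by condition~(a) of Definition~\ref{def:density-interpolant}, for each fixed $\bx\in\Gamma$ the interpolant $\Phi(\cdot;\bx)$ solves the homogeneous equation $\Lcal\Phi(\cdot;\bx)=0$ in $\Omega$ and is therefore admissible as the argument $u$ of the Green's identities~\eqref{eq:greens-formula-1}--\eqref{eq:greens-formula-2}. Specialized to $u=\Phi(\cdot;\bx)$ and evaluated at the trace point $\bx\in\Gamma$, these yield the two auxiliary relations $K[\gamma_0\Phi(\cdot;\bx)](\bx) - S[\gamma_1\Phi(\cdot;\bx)](\bx) = -\tfrac{1}{2}\gamma_0\Phi(\bx;\bx)$ and $T[\gamma_0\Phi(\cdot;\bx)](\bx) - K'[\gamma_1\Phi(\cdot;\bx)](\bx) = -\tfrac{1}{2}\gamma_1\Phi(\bx;\bx)$, which hold independently of the density $\varphi$.

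To establish identity~(a) of~\eqref{eq:reg-comb-integral-op}, I would add and subtract $K[\gamma_0\Phi(\cdot;\bx)](\bx) - S[\gamma_1\Phi(\cdot;\bx)](\bx)$ inside the definition of $V_{\alpha,\beta}[\varphi](\bx)$ from~\eqref{eq:reg_int_op} and then use the linearity of $S$ and $K$ to merge the density arguments. Substituting the first auxiliary identity into the bracketed remainder produces the $-\tfrac{1}{2}\gamma_0\Phi(\bx;\bx)$ boundary term, yielding~(a). Identity~(b) is obtained in an entirely symmetric fashion, with $(K,S)$ replaced by $(T,K')$ and the second auxiliary identity used in place of the first.

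The regularity statements are not a separate task: the two asymptotic bounds $\gamma_0\Phi(\by;\bx) - \alpha\varphi(\bx) = \Ocal(|\bx-\by|^{\varrho+1})$ and $\gamma_1\Phi(\by;\bx) - \beta\varphi(\bx) = \Ocal(|\bx-\by|^{\varrho+1})$ are nothing more than conditions~(b) and~(c) of Definition~\ref{def:density-interpolant} restated, and the claimed orders for the regularized integrands then follow by multiplying these bounds against the assumed kernel singularities $\Ocal(|\bx-\by|^{-q_i})$.

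No significant obstacle is anticipated: the lemma is essentially an algebraic rearrangement made possible by applying Green's identities to the interpolant, combined with a direct reading of the definition. The only point that warrants some care is that the trace value $\gamma_0\Phi(\bx;\bx)$ appearing in~(a)---and analogously $\gamma_1\Phi(\bx;\bx)$ in~(b)---must be interpreted as the $\by\to\bx$ limit of $\gamma_0\Phi(\by;\bx)$ along $\Gamma$; this limit is well defined because $\Phi(\cdot;\bx)$ is a smooth solution of the PDE in a neighborhood of $\Gamma$.
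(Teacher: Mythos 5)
Your proposal is correct and reproduces exactly the argument the paper has in mind: the paper does not give a formal proof but merely notes that the lemma ``is an immediate consequence of the boundary integral identities~\eqref{eq:greens-formulas} applied to the density interpolant and the local properties~(\ref{eq:hom_pde}b) and~(\ref{eq:hom_pde}c),'' which is precisely the add-and-subtract rearrangement you carry out, made admissible by $\Lcal\Phi(\cdot;\bx)=0$. The regularity claims are, as you observe, a direct restatement of Definition~\ref{def:density-interpolant}(b)--(c) combined with the assumed kernel singularity orders.
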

\begin{remark}
Since the four Calder\'on operators in~\eqref{eq:BIOS} can be expressed in terms of $V_{\alpha,\beta}$ and $W_{\alpha,\beta}$ as $K = V_{1,0}$, $S = V_{0,-1}$, $T = W_{1,0}$ and $K' = W_{0,-1}$, they can be regularized individually by means of the density interpolant. Moreover, linear combinations of the form $V_{\alpha,\beta}$ and $W_{\alpha,\beta}$ occur naturally e.g. in combined field integral equations (CFIEs) for exterior problems, see examples in Section~\ref{sec:numerics}.
\end{remark}
Lemma~\ref{lm:reg} states that, provided a suitable density interpolant is constructed, the boundary integrals~\eqref{eq:BIOS} can be rewritten in terms of integrals with regular (e.g., bounded or differentiable) integrands which can be numerically evaluated by means of elementary quadrature rules. We mention in passing that nearly-singular integrals, where \eqref{eq:greens-representation} is to be evaluated for an observation point which is close to, but not on, the surface $\Gamma$, can be similarly regularized by introducing the orthogonal projection $\bx^{\star} := {\rm arg\,min}_{\ney \in \Gamma}|\ney - \bx^{\star}|$ of $\bx$ on $\Gamma$ and performing the regularization about $\bx^{\star}$. We refer the reader to~\cite{HDI3D} for a more through discussion on the regularization of nearly singular integrals in the context of high-order DIMs.

\subsection{Density interpolation via collocation}\label{sec:dens-interp-colloc}

This section presents the novel high-order approximate density interpolant that, unlike existing Taylor-like density interpolants,  can be constructed by means of a simple collocation procedure which does not entail evaluation of tangential derivatives of the density function.

Specifically, for a given point $\nex\in\Gamma$ we consider a finite set of distinct neighboring points $\{\ney_j\}_{j=1}^P\subset\Gamma_h(\nex)\subset\Gamma$ where $\Gamma_h(\nex)$ is a simply connected subset of $\Gamma$ containing $\nex$ and satisfying ${\rm diam }(\Gamma_h(\nex)) =\sup_{\ney,\nez\in \Gamma_h(\nex)}|\ney-\nez|<h$ for some small $h>0$. As discussed later in Section~\ref{sec:numer-discr}, once a surface discretization is available, $\Gamma_h(\nex)$ is selected as the surface patch to which $\nex$ belongs, $h$ is the characteristic patch diameter, and the neighboring points are chosen as the interior quadrature nodes used for numerical integration over that patch. The interpolant is then sought in the form
\begin{equation}\label{eq:approx_interp_exp}
  \Phi(\ner;\nex) = \sum_{\ell=1}^L G(\ner,\nez_\ell)c_\ell(\nex),
\end{equation}
where $\{\nez_\ell\}_{\ell=1}^{L}\subset \R^d\setminus\overline\Omega$ is a judiciously chosen fixed (independent of $\nex$) set of exterior points placed outside the domain~$\Omega$ and $\{c_\ell(\nex)\}_{\ell=1}^{L}\subset \C^{\sigma}$ is a set of (a priori unknown, possibly vector) coefficients, the total number of scalar coefficients involved in~\eqref{eq:approx_interp_exp} being $\sigma L$ (with $\sigma=1$ for scalar PDEs and $\sigma=d$ for vector PDEs, see~\eqref{eq:considered-PDEs}). This is akin to the method of fundamental solutions~\cite{fairweather1998method,barnett2008stability}.

Since $\Lcal G(\cdot,\nez_\ell)c_\ell(\nex)=0$ in $\Omega$ for each $\ell=1,\ldots,L$, the interpolant $\Phi(\cdot;\nex)$ defined by~\eqref{eq:approx_interp_exp} satisfies $\Lcal \Phi(\cdot;\nex)=0$ in $\Omega$, i.e.~(\ref{eq:hom_pde}a), by construction. To find the coefficients $\{c_\ell(\nex)\}_{\ell=1}^{L}$ we impose the following collocation-interpolation conditions:
\begin{equation}\label{eq:collocation_conds}
  \gamma_0\Phi(\ney_j;\nex) = \alpha\varphi(\ney_j)\quad\mbox{and}
  \quad \gamma_1\Phi(\ney_j;\nex) = \beta\varphi(\ney_j),
  \quad j=1,\ldots,P,
\end{equation}
at all neighboring/collocation points $\{\ney_j\}_{j=1}^P$. The conditions~\eqref{eq:collocation_conds} give rise to a total of $2\sigma P$ linear equations for the $\sigma L$ unknown coefficients, which can be solved by means of the Moore-Penrose pseudo-inverse of the associated matrix provided $L\geq 2P$. More details on the construction of $\Phi$ are given in Section~\ref{sec:interpolant_construct}.

While the regularization of the singular integrals by means of an exact interpolant (see Definition~\ref{def:density-interpolant}) can be easily understood through Lemma~\ref{lem:regularized-sldl}, the analysis of the regularized operators through an approximate interpolant is more subtle. This is because the approximate interpolant no longer enforces conditions of the type $\varphi(\by) - \Phi(\by;\bx) = \Ocal(|\bx - \by|^{p})$ as $\Gamma \ni \by \to \bx \in \Gamma$, and therefore the kernels in the ``regularized'' integral operators are no longer smooth when an approximate interpolant is used. As it turns out, however, the collocation interpolant can effectively regularize weakly-singular, singular, and even hypersingular integral kernels, provided the set of interpolation point~$\{\ney_j\}_{j=1}^P$ is suitably selected. This is so because the resulting interpolation error terms $\alpha \varphi - \gamma_0 \Phi$ and $\beta \varphi - \gamma_1\Phi$, present in the integrands of~\eqref{eq:reg-comb-integral-op}, remain ``small'' within the set (patch) $\Gamma_h(\nex)$ containing  the kernel's singularity.

Therefore, in order to avoid the kernel singularity altogether in the context of the collocation density interpolants, we resort to the following approximations of the regularized integral operators:
\begin{equation}
\begin{aligned}
  V_{\alpha,\beta}[\varphi](\bx)
 \approx \widetilde V_{\alpha,\beta}[\varphi](\bx)
 &:=\int_{\Gamma\setminus\Gamma_h(\nex)}(\gamma_{1,\ney}G(\nex,\ney))^\top\lf\{\alpha\varphi(\ney)-\gamma_0\Phi(\ney;\nex)\rg\}\de s(\ney) \\
 & \quad -\int_{\Gamma\setminus\Gamma_h(\nex)} G(\nex,\ney)\lf\{\beta\varphi(\ney)-\gamma_1\Phi(\ney;\nex)\rg\}\de
      s(\ney)- \half\gamma_0\Phi(\bx;\bx), \\
  W_{\alpha,\beta}[\varphi](\bx)
 \approx \widetilde W_{\alpha,\beta}[\varphi](\bx)
 &:= \int_{\Gamma\setminus\Gamma_h(\nex)} \gamma_{1,\bx}\left(\gamma_{1,\by}G(\bx,\by)\right)^\top\lf\{\alpha\varphi(\ney)-\gamma_0\Phi(\ney;\nex)\rg\}\de s(\ney) \\
 & \quad -\int_{\Gamma\setminus\Gamma_h(\nex)} (\gamma_{1,\nex}G(\nex,\ney))\lf\{\beta\varphi(\ney)-\gamma_1\Phi(\ney;\nex)\rg\}\de s(\ney)- \half\gamma_1\Phi(\bx;\bx).
\end{aligned}\hspace*{-1em}\label{eq:approx_ops}\end{equation}
A proper justification of the approximations~\eqref{eq:approx_ops}, and an assessment of the induced errors, are provided later in Section~\ref{sec:error_analysis}. We first focus on the numerical discretization of the proposed DIMs.

\section{Numerical discretization}\label{sec:numer-discr}

This section presents the details of the proposed numerical methods for the construction of the collocation density interpolant and the evaluation of the regularized boundary integral operators~\eqref{eq:approx_ops}. Since the expressions in~\eqref{eq:approx_ops} bypass the need to evaluate singular integrals of any kind, we develop here a Nystr\"om discretization method based on the direct evaluation of the desingularized integrals~\eqref{eq:approx_ops} by means of elementary quadrature rules, which are used for numerical integration over the surface patches making up~$\Gamma$.

\subsection{Boundary discretization and quadrature}
\label{sec:quadrature}

We start by describing the BIE discretization approach used in this work. The boundary $\Gamma\subset\R^d$, $d=2,3$, is assumed to be given by the union of a finite number of non-overlapping patches $\Gamma_m$, $m=1,\ldots,M$. A canonical example of such a boundary representation would be a triangular surface mesh in three-dimensions formed by $M$ planar triangles. Integrals over the whole boundary $\Gamma$ can thus be expressed as
\begin{equation}
  \int_{\Gamma}F(\ney)\de s= \sum_{m=1}^M\int_{\Gamma_m}F(\ney)\de s,
  \nonumber
\end{equation}
where the function $F:\Gamma\to \C^\sigma$ is assumed to be regular (at least bounded). To approximate such integrals over a given patch $\Gamma_{m}$, we resort to quadrature rules comprising a total of $P_m$ nodes $\Pcal_m = \{\ney_{m,r}\}_{r=1}^{P_m}$, always assumed to lie in the interior of $\Gamma_m$, and weights $\Wcal_m = \{w_{m,r}\}_{r=1}^{P_m}$.

For two-dimensional problems (i.e., one-dimensional line integrals), we resort to standard Gauss-Legendre quadrature rules. In three dimensions, we consider two different quadrature types depending on the shape of the reference patch: (a) tensor products of one-dimensional Gauss-Legendre quadratures for quadrilaterals, and (b) tabulated two-dimensional Gauss quadratures for triangles~\cite{cowper1973gaussian,dunavant1985high}. Figure~\ref{fig:quad_points} depicts the quadrature node locations for patches and rules used in this paper for three-dimensional problems.
\begin{figure}[h] \centering
  \includegraphics[width=0.7\textwidth]{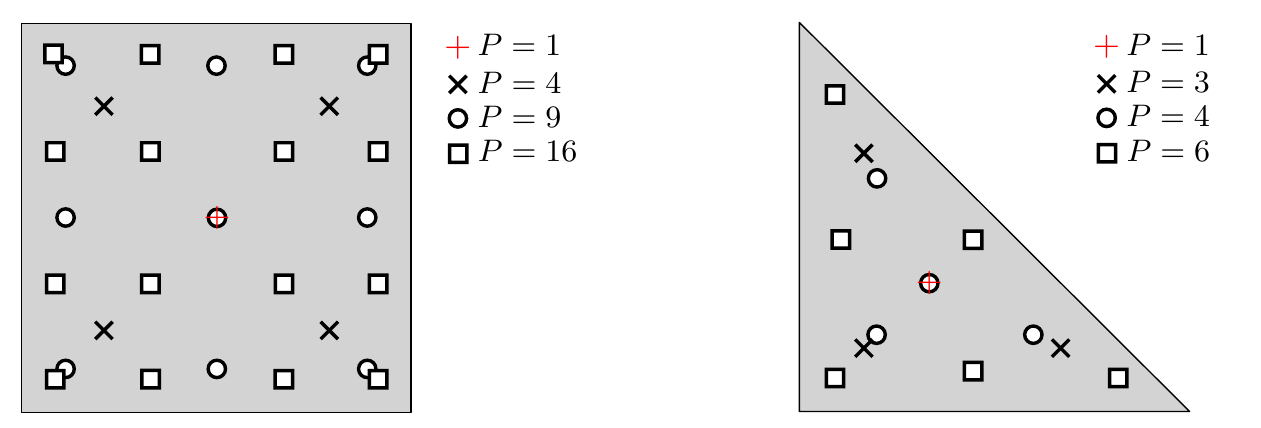}
  \caption{Left: tensor-product Gauss-Legendre quadrature points for integrals over quadrilateral patches. Right: tabulated Gauss quadrature points for integrals over triangular patches~\cite{cowper1973gaussian,dunavant1985high}. $P$ is the total number of quadrature nodes.}\label{fig:quad_points}
\end{figure}

Aggregating the quadrature nodes and weights\footnote{In practice, the normal vector at each quadrature node is also stored since it is needed in the computation of some of the integral kernels involved. For the sake of presentation simplicity, this detail has been omitted.} for each patch, we obtain the global set $\Qcal$ of nodes and weights for the whole boundary $\Gamma$: $\Qcal = \Big\{ \big\{ \ney_{m,r} \big\}_{r=1}^{P_m}, \big\{ w_{m,r} \big\}_{r=1}^{P_m} \Big\}_{m=1}^M$ . In the algorithmic descriptions of the proposed numerical procedures (Sections~\ref{sec:interpolant_construct} and~\ref{sec:forward_map}), it will be convenient to utilize a global indexing system, where the global quadrature is given by $\Qcal = \left\{\by_i, w_i\right\}_{i=1}^N$, with $N = \sum_{m=1}^M P_m$ being the total number of quadrature points. We henceforth denote by $m_i$, where $1 \leq m_i \leq M$, the index of the patch $\Gamma_{m_i}$ to which the node $\by_i$ belongs. Since we use quadrature nodes which lie strictly within the patches, there is a unique patch index $m_i$ for each node $\by_i$, $1 \leq i \leq N$. Unless otherwise stated, we employ the global indexing system of quadrature nodes in the sequel. We additionally define the list $I(m):=\big\{ i : \by_i\in\Gamma_{m} \ \big\}$ of the (global) indices of all nodes lying on a given patch $\Gamma(m)$ and the complementary list $\overline{I}(m):=\big\{\,1,\ldots,N\,\big\}\setminus I(m)$. With this definition, $I(m_i)$ gives the global list of quadrature nodes lying in the same patch as $\by_i$.

We then adopt the Nystr\"{o}m solution method, whereby the relevant integral operators~\eqref{eq:approx_ops} are approximated by applying the above quadrature to the integrals and performing a collocation at the quadrature nodes (i.e. setting $\bx=\by_i$, $1\leq i\leq N$ in~\eqref{eq:approx_ops}), the resulting discretized BIE being solved for the density values at the quadrature nodes.\enlargethispage*{3ex}

Finally, we mention that although the boundary $\Gamma$ has been so far assumed to be globally smooth, in order for the proposed methodology to work (see Section~\ref{sec:error_analysis}) $\Gamma$ is is only required to be piecewise smooth, with each patch $\Gamma_m$ admitting a smooth $C^\infty$ parametrization $\chi_m:\R^{d-1}\to\Gamma_m$.

\subsection{Collocation interpolant construction}\label{sec:interpolant_construct}

This section provides details on the construction of the collocation density interpolant introduced in Section~\ref{sec:dens-interp-colloc}.

\paragraph{Selecting the source points:}
The first task in the construction of the collocation density interpolant~\eqref{eq:approx_interp_exp} is to select an appropriate set of source points $\{\nez_\ell\}_{\ell=1}^L$. The main principle underlying this selection is that a small number $L$ of sources should suffice to produce, via~\eqref{eq:approx_interp_exp}, the density interpolants $\Phi(\ner;\nex)$ associated to each quadrature node $\nex=\ney_i$ on $\Gamma$. In order to achieve that, the set of source points is chosen as the nodes of a high-order quadrature rule on a curve/surface $S$ surrounding $\Gamma$ (Figure~\ref{fig:source-location}). The rationale behind this choice is that the density interpolant can then be interpreted as the approximation
\begin{equation}
  \Phi(\ner;\nex) =\sum_{\ell=1}^L G(\ner,\nez_\ell)c_\ell(\nex)\approx\int_{S}G(\ner,\ney)\tilde c(\ney;\nex)\de s(\ney)
\end{equation}
of a certain integral over $S$. Since the quadrature rule on $S$ exhibits high-order convergence as the number of quadrature nodes increases, a moderate number $L$ of quadrature nodes produces a sufficiently good approximation of $\Phi(\ner;\nex)$ for all $\ner\in\Omega$ and $\nex\in\Gamma$. For the sake of implementation simplicity, $S$ is selected as either a circle ($d=2$) or a sphere ($d=3$).\enlargethispage*{3ex}

To define a suitable surrounding circle/sphere $S$, we consider the minimal (axis-aligned) bounding box $B\subset\R^{d}$ of the set of surface nodes $\left\{\ney_{i}\right\}_{i=1}^N\subset\Gamma$, and denote by $\bx_c$ and $R>0$ its center and radius, respectively (Figure~\ref{fig:source-location}). In two dimensions we then select $L$ source points $\nez_\ell$ on the circle of radius $t R$ and center $\bx_c$, the multiplier $t>1$ ensuring that those points do not lie too close to~$\Gamma$. The points are chosen by sampling $L$ equispaced angles on $[0,2\pi)$ and using polar coordinates to map those onto the circle (Figure~\ref{fig:source-location}); they are therefore associated to the trapezoidal rule on~$S$, which yields superalgebraic convergence for $C^\infty$ integrands. The value $t=5$ is used for all the numerical experiments presented in this paper. We have found the present DIM to be largely insensitive to $t$, with values ranging from $2$ to $1000$ producing approximately the same results. Indeed, in the limit $t\to\infty$, the point source fields used for the Helmholtz and elastodymanic boundary value problems, could be replaced by planewaves in the directions $-\nez_\ell/|\nez_\ell|$, in a manner akin to the planewave DIM~\cite{perez2019planewave}.

Similarly, in three dimensions the $L$ source points $\nez_\ell$ lie on the sphere $S$ with center $\bx_c$ and diameter $t R$ (with $t=5$ in practice); more precisely, the $\{\nez_\ell\}_{\ell=1}^L\subset S$ are taken as Lebedev quadrature nodes~\cite{lebedev1976quadratures}, which are constructed so as to exactly integrate spherical harmonics up to a specified order. We briefly mention here that other selection strategies were also examined, such as points given in spherical coordinates by uniform grids in both azimuthal and polar angles, or uniformly distributed random points on $S$. Overall, although the observed difference between the aforementioned strategies was not significant (e.g., smaller than a factor of 10), the Lebedev points appeared to consistently yield better results, so we settled on that choice.
\begin{figure} \centering
  \includegraphics[width=0.35\textwidth]{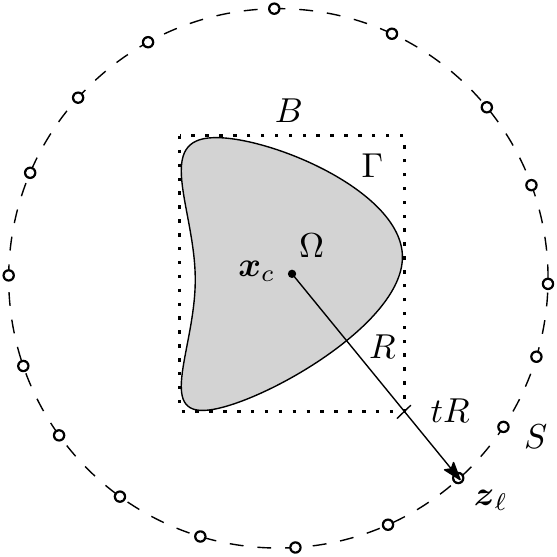}
  \caption{Schematic of source locations used in the interpolant construction.}\label{fig:source-location}
\end{figure}

\paragraph{Computing the coefficients:} With the set $\{z_\ell\}_{\ell=1}^L\subset S$ now selected, we need to find the coefficients $\{c_\ell(\nex)\}_{\ell=1}^L$ in the definition of the density interpolant~\eqref{eq:approx_interp_exp} at~$\nex\in\Gamma$.

Instead of constructing a collocation interpolant $\Phi(\cdot;\nex)$ for each surface node $\nex=\ney_i$, $i=1,\ldots,N$, we construct a single patch interpolant
\begin{equation}
  \Phi_m(\br) = \sum_{\ell=1}^L G(\ner,\nez_\ell)c_{m,\ell}, \quad m=1,\ldots, M.  \label{eq:element-interpolant}
\end{equation}
associated with all the nodes $\Pcal_m=\{\ney_{r,m}\}_{r=1}^{P_m}$ contained in a surface patch $\Gamma_m$,  $m=1,\ldots,M$. The collocation interpolant $\Phi(\ner,\nex)$ introduced in~\eqref{eq:approx_interp_exp} is then taken at the nodes of $\Pcal_m$ as
\begin{equation}
  \Phi(\ner,\nex) := \Phi_m(\ner)\quad\mbox{for}\quad \nex\in\Pcal_m,\  m=1,\ldots,M.
\end{equation}
The $L$ coefficients $\left\{c_{m,\ell}\right\}_{\ell=1}^L$ featured in the patch interpolant $\Phi_m$ in~\eqref{eq:element-interpolant} are obtained by imposing the interpolation-collocation conditions~\eqref{eq:collocation_conds} at all quadrature nodes $\by_j \in \Pcal_m$, i.e. by requiring
$$(\gamma_0\Phi_m)(\br) =\alpha \varphi(\br)\andtext (\gamma_1\Phi_m)(\br) = \beta \varphi(\br)\quad\mbox{for all}\quad \ner\in\Pcal_m=\{\ney_{m,r}\}_{r=1}^{P_m}.$$
For each patch
$\Gamma_m$, $m=1,\ldots,M$, we thus obtain the following  linear system for the unknown set of  coefficients $\{c_{m,\ell}\}_{\ell=1}^L$:
\begin{equation}
  \mathrm{M}_m \mathbf{c}_m =\mathrm{D}_{\alpha,\beta} \mathbf{b}_m,
  \quad \mathbf{c}_m = \big\{ c_{m,1},\ldots,c_{m,L} \big\}^T,
  \quad \mathbf{b}_m = \big\{ \varphi(\ney_{m,1}),\ldots,\varphi(\ney_{m,P_m}) \big\}^T,
  \label{eq:expansion-coefficients-system}
\end{equation}
where
\begin{align}
  \mathrm{M}_m &= \begin{bmatrix} \mathrm{M}^{(0)}_m \\ \mathrm{M}^{(1)}_m \end{bmatrix}, \ \
  [\mathrm{M}^{(0)}_m]_{r,\ell} = \gamma_0G(\ney_{m,r},\nez_\ell),  \ \
  [\mathrm{M}^{(1)}_m]_{r,\ell} = \gamma_{1}G(\ney_{m,r},\nez_\ell),\ \
  1 \leq r \leq P_m,\  1 \leq \ell \leq L, \\
  \mathrm{D}_{\alpha,\beta} &= \begin{bmatrix} \alpha \mathrm{I} \\ \beta \mathrm{I} \end{bmatrix}, \quad
  [\mathrm{I}]_{i,j} = \delta_{ij},\quad 1 \leq i,j \leq P_m.
\end{align}

\begin{remark}\label{rem:matrix_dimensions}
  In the vector case ($\sigma=d$), each ``coefficient'' is a vector $c_{m,\ell}\in \C^{d}$, and the corresponding entries of $\mathrm{M}_m$ are matrices $[\mathrm{M}^{(j)}_m]_{r,\ell}\in \C^{d\times d}$, $j=0,1$. This means that
  $\mathrm{M}_m$, as a matrix with scalar complex entries, has size $(2\sigma P_m) \times (\sigma L)$
  where $\sigma=1$ (scalar problems) or $\sigma=d$ (vector problems), $d=2,3$ being the dimension of the ambient space~$\R^d$. For the sake of presentation simplicity, in what follows we treat $\mathrm{M}_m$ as a matrix of size $(2P_m) \times L$ with entries in $\C^{\sigma\times \sigma}$ and $\mathbf{c}_{m}$ as a vector of length $L$ with entries in $ \C^\sigma$. This allows for the same notation and, in particular, the same indexing system, to be used for all operators introduced in~\eqref{eq:considered-PDEs}.
\end{remark}
With that in mind,  Algorithm~\ref{alg:interpolant-construction} presents the procedure for assembling the local interpolation matrices $\mathrm{M}_m$ and computing the coefficients $\mathbf{c}_m$ for a given patch $\Gamma_m$ using the quadrature nodes~$\Pcal_m$.

\begin{algorithm}[H]
  \caption{Computation of the coefficients of the patch density interpolant~\eqref{eq:element-interpolant}}
  \label{alg:interpolant-construction}
  \begin{algorithmic}[1]
    \REQUIRE Patch quadrature nodes $\Pcal_m = \left\{\ney_{m,r}\right\}_{r=1}^{P_m}$, {free-space Green's function} $G$; source points, $\left\{\nez_\ell\right\}_{\ell=1}^L$, surface density $\varphi$
    \STATE $\mathrm{M}_m \leftarrow $ initialize matrix of size $(2P_m) \times L$
    \STATE $\bold b_m \leftarrow $ initialize vector of size $2P_m$
    \FOR{$r=1$ \TO $P_m$}
    \STATE $[\bold b_m]_r\leftarrow \alpha \varphi(\ney_{m,r})$
    \STATE $[\bold b_m]_{P_m+r} \leftarrow \beta \varphi(\ney_{m,r})$
    \FOR{$\ell=1$ \TO $L$}
    \STATE $[\mathrm{M}_m]_{r,\ell}  \leftarrow (\gamma_0G)(\nez_\ell,\ney_{m,r})$
    \STATE $[\mathrm{M}_m]_{P_m+r,\ell} \leftarrow (\gamma_1G)(\nez_{\ell},\ney_{m,r})$
    \ENDFOR
    \ENDFOR
    \STATE Solve $\mathrm{M}_m\bold c_m = \bold b_m$ \label{alg:line:pinv}
    \RETURN $\bold c_m$
  \end{algorithmic}
\end{algorithm}

The calculation of the interpolant coefficients $c_{m,\ell}$ involves solving the local linear system~\eqref{eq:expansion-coefficients-system}, where the matrix $\mathrm{M}_m$ has size $(2 P_m)\times  L$ over $\C^{\sigma\times\sigma}$. For computational efficiency, one would want to take $L$ as small as possible, while still having $\mathrm{D}_{\alpha,\beta}\mathbf{b}_m$ in the column space of $\mathrm{M}_m$ to guarantee that the system~\eqref{eq:expansion-coefficients-system} is (possibly non-uniquely) solvable, so that the collocation-interpolation conditions~\eqref{eq:collocation_conds} are verified exactly. We have found that the minimum value $L=2P_m$ leads to rank-deficient matrices $\mathrm{M}_m$ for some patches $\Gamma_m$; $L$ is therefore chosen such that $L > 2P_m$, in order to enrich the column space of $\mathrm{M}_m$. In practice, values as small as $L = 2P_m + 1$ appear to be sufficient. In view of the relatively small computational cost of taking a larger $L$, compared to the other parts of the overall method, and of the improved conditioning observed for larger values of $L$, we have set $L \approx 3P$
for all examples presented in this paper, where $P = \max_{1 \leq m \leq M} P_m$. While we found practical use for allowing the number of quadrature points per patch $P_m$ to be variable (see the examples in Section~\ref{sec:hybrid-meshes} where hybrid meshes are considered), we saw no clear advantage of letting $L$ vary from patch to patch; consequently, $L$ is taken to be constant for all patches.

The solution of the local system~\eqref{eq:expansion-coefficients-system} for each patch $\Gamma_m$, can be obtained by means of the $LQ$ decomposition of the matrix $\mathrm{M}_m$: letting $(p,q):=(2\sigma P_m,\sigma L)$ denote the size of $\mathrm{M}_{m}$ , we have
\begin{equation}
   \mathrm{M}_m = \begin{bmatrix} \mathrm{L} & \mathbf{0} \end{bmatrix} \mathrm{Q}
 = \begin{bmatrix} \mathrm{L} & \mathbf{0} \end{bmatrix}
   \begin{bmatrix} \mathrm{Q}_1 \\ \mathrm{Q}_2 \end{bmatrix}
 = \mathrm{L} \mathrm{Q}_1,
\end{equation}
where $\mathrm{L}\in\C^{p\times p}$ is a lower triangular matrix and $\mathrm{Q}\in\C^{q\times q}$ is a unitary matrix.
The matrix $\mathrm{Q}_1\in\C^{p\times q}$ consists of the first $p$ rows of $\mathrm{Q}$. The coefficients $\mathbf{c}_m$ are then found by the steps
\begin{enumerate}[topsep=3pt,itemsep=3pt,parsep=0pt,partopsep=0pt]
  \item Solving $\mathrm{L}\mathbf{y} = \mathbf{b}_m$ using forward
        substitution, where $\mathbf{y}$ is a temporary vector of size $m \times 1$,
  \item Performing the matrix-vector product  $\mathbf{c}_m=\mathrm{Q}_1^{\star}\mathbf{y}$,
\end{enumerate}
which amount to setting $\mathbf{c}_m=\mathrm{M}^{\dagger}_m\mathbf{b}_m$ with the pseudo-inverse $\mathrm{M}^{\dagger}_m$ of $\mathrm{M}_m$ given by $=Q_1^{\star}L^{-1}$. Note that in the case $\sigma=d$, this step requires $\mathrm{M}_m$ to be converted to a standard matrix over $\C$. Note also that the $LQ$ decomposition bypasses the need to explicitly invert $\mathrm{L}_m$, which due to the possibly bad conditioning of the matrix $\mathrm{M}_m$, could also be poorly conditioned.

The dominant cost of solving the local system~\eqref{eq:expansion-coefficients-system} is then given by the $LQ$ decomposition, which has complexity $\Ocal(LP_m^2)$ using the Householder algorithm. Under the assumption that $L = \Ocal(P_m)$, this gives a dominant cost of $\Ocal(P_m^3)$ per surface patch $\Gamma_m$. Since there are $M = \Ocal(N/P)$ patches, the cost of computing the coefficients $c_{m,\ell}$ for all patches is of the order $\Ocal(N P^2)$, and therefore linear in the number of quadrature nodes (and problem size) $N$.

\subsection{Operator evaluation}\label{sec:forward_map}

With the collocation density interpolant constructed, we are now in a position to evaluate the regularized integral operators $\widetilde V_{\alpha,\beta}$ and $\widetilde W_{\alpha,\beta}$ defined in~\eqref{eq:approx_ops}. We focus on the task of evaluating the operator $\widetilde V_{\alpha,\beta}$, which is a linear combination of the single- and double-layer operators, applied to a given density $\varphi$. The evaluation of $\widetilde W_{\alpha,\beta}\varphi$ follows a completely analogous process.\enlargethispage*{5ex}

\paragraph{Direct operator evaluation:} Fixing a target collocation point $\nex=\by_i$, let $\Gamma_{m_i}$ be the patch containing $\by_i$, and let $\Phi(\cdot;\nex)=\Phi_{m_i}$ denote the collocation density interpolant~\eqref{eq:element-interpolant} associated with the quadrature nodes of $\Gamma_{m_i}$. Since solving the local linear system~\eqref{eq:expansion-coefficients-system} allows to satisfy all interpolation-collocation conditions~\eqref{eq:collocation_conds}, the operator evaluation~$V_{\alpha,\beta}[\varphi](\by_i)$ is approximated by $\widetilde V_{\alpha,\beta}[\varphi](\by_i)$ as given by~\eqref{eq:approx_ops} with $\Gamma_h(\bx)=\Gamma_{m_i}$, i.e.:
\begin{multline}
  \widetilde V_{\alpha,\beta}[\varphi](\by_i)
 = -\half\gamma_0\Phi_{m_i}(\by_i) +\sum_{j\in\overline{I}(m_i)}  w_{j}\big(\gamma_{1,\by}G(\by_i,\by_{j})^T\left\{\alpha \varphi(\by_{j}) - \gamma_0\Phi_{m_i}(\by_{j}) \right\} \\
  - G(\by_i,\by_{j})\left\{\beta \varphi(\by_{j}) - \gamma_1 \Phi_{m_i}(\by_{j}) \right\} \big),\quad i=1,\ldots,N.
    \label{eq:reg-forward-map}
\end{multline}
The validity of ignoring the ``self-patch'' contribution will be supported by theoretical arguments (Section~\ref{sec:error_analysis}) and computational evidence (Section~\ref{sec:numerics}). A straightforward (naive) implementation of formula~\eqref{eq:reg-forward-map} is shown in Algorithm~\ref{alg:forward-map}, where for each target point $\by_i \in \Qcal$ we simply replace the integrand by its kernel-regularized version, and then integrate over the surface (lines~\ref{lst:line:quad-begin}--\ref{lst:line:quad-end}).
\begin{algorithm}
  \caption{Integral operator evaluation}
  \label{alg:forward-map}
  \begin{algorithmic}[1]
    \REQUIRE Quadrature $\Qcal = \left\{\ney_{i},w_i \right\}_{i=1}^{N}$; density $\varphi$\;
    \STATE $\bold b \leftarrow$ initialize zero vector of length $N$
    \STATE $\left\{\Phi_{m}\right\}_{m=1}^M \leftarrow$ construct patch interpolant for each patch $\Gamma_{m}$ (see Algorithm~\ref{alg:interpolant-construction})
    \FOR{$i=1$ \TO $N$}
    \STATE $m_i \leftarrow$ patch index of $\by_i$
    \FOR{$j\in\overline{I}(m_i)$} \label{lst:line:quad-begin}
    \STATE $[\bold b]_i \leftarrow [\bold  b]_i + w_{j}\left(\gamma_{1,\by}G(\by_i,\by_j)\left\{\alpha \varphi(\by_j) - \gamma_0\Phi_{m_i}(\by_j) \right\} - G(\by_i,\by_j)\left\{\beta \varphi(\by_j) - \gamma_1 \Phi_{m_i}(\by_j) \right\} \right)$
    \ENDFOR \label{lst:line:quad-end}
    \STATE $[\bold b]_i \leftarrow [\bold b]_i - \Phi_{m_i}(\by_{i})/2$
    \ENDFOR
    \RETURN $\bold b$
  \end{algorithmic}
\end{algorithm}

\paragraph{Efficient operator evaluation.}

Algorithm~\ref{alg:forward-map} has a $O(N^2)$ complexity, incompatible with fast BIE solution methods, and a more efficient version is therefore called for. We now reorganize it in a way that will facilitate the use of fast operator evaluation methods, and also allow to precompute some of the operations associated with the collocation density interpolant. The latter feature is beneficial when repeated operator evaluations (for different input density functions) are needed, as when iterative linear algebra solvers (such as GMRES) are applied to the discrete BIE system.

These enhancements are based on the following splitting of~\eqref{eq:reg-forward-map}: letting $\bol{\varphi} \approx \big\{\varphi(\by_1),\ldots,\varphi(\by_N)\big\}^T$, we set
\begin{equation}
  \widetilde  V_{\alpha,\beta}[\varphi](\by_{i})
 = \lf[\mathrm{V}_{\alpha,\beta}\bol{\varphi}\rg]_i
 := \big[\mathrm{V}_{\alpha,\beta}^{(0)}\bol{\varphi}\big]_i
 +  \big[\mathrm{V}_{\alpha,\beta}^{(1)}\bol{\varphi}\big]_i,\quad i=1,\ldots,N,
  \label{eq:splitting}
\end{equation}
where the matrix $\mathrm{V}_{\alpha,\beta}^{(0)}$ is given by
\begin{align}
  \mathrm{V}_{\alpha,\beta}^{(0)} & = \left(\alpha  \mathrm{K}^{(0)} - \alpha \mathrm{S}^{(0)}\right)\text{diag}(\bold w),
\end{align}
with
\begin{equation}
\begin{aligned}
  \big[\mathrm{K}^{(0)}\big]_{i,j} = &
  \begin{cases}
    \gamma_{1,\by}G(\by_i,\by_j)\quad & \mbox{for} \quad j\in\overline{I}(m_i), \\
    0 \quad                           & \mbox{for} \quad j\in I(m_i),
  \end{cases}                                                                                               \\
  \big[\mathrm{S}^{(0)}\big]_{i,j} = & \begin{cases}
    G(\by_i,\by_j)\quad & \mbox{for} \quad j\in\overline{I}(m_i), \\
    0 \quad             & \mbox{for} \quad j\in I(m_i),
  \end{cases}
\end{aligned} \quad \andtext [\bold w]_{j} = w_j.
\label{eq:S0_K0_definition}
\end{equation}

We can view $\mathrm{V}_{\alpha,\beta}^{(0)}$ as an approximation of the complete matrix $\mathrm{V}_{\alpha,\beta}$ that ignores the ``self-patch'' contributions: the matrix $\mathrm{V}_{\alpha,\beta}^{(0)}$ has a zero block diagonal and is otherwise dense. Since this approximation retains all far-field contributions to $\mathrm{V}_{\alpha,\beta}$, $\Hcal$-matrix or fast-multipole methods can be applied to accelerate evaluations of $\big[\mathrm{V}_{\alpha,\beta}^{(0)}\bol{\varphi}\big]_i$.\enlargethispage*{7ex}

Then, the matrix $\mathrm{V}_{\alpha,\beta}^{(1)}$ can be understood as a correction to $\mathrm{V}_{\alpha,\beta}^{(0)}$, which accounts for the local kernel-regularization performed around the diagonal (singular) entries by gathering all the contributions arising from the collocation density interpolants. It will be found to be sparse (block-diagonal). Unlike $\mathrm{V}_{\alpha,\beta}^{(0)}$, the explicit form of the entries of $\mathrm{V}_{\alpha,\beta}^{(1)}$ cannot be straightforwardly given and the actual procedure for its evaluation is postponed to Section~\ref{sec:assembl-integr-oper}. Nevertheless, it follows from~\eqref{eq:reg-forward-map} and~\eqref{eq:splitting} that\enlargethispage*{1ex}
\begin{equation}
  \big[\mathrm{V}_{\alpha,\beta}^{(1)}\bol{\varphi}\big]_i
 = - \half\Phi_{m_i}(\by_{i}) + \sum_{j\in\overline{I}(m_i)}  w_{j}\left(\gamma_{0}G(\by_{i},\by_{j})\gamma_1\Phi_{m_i}(\by_{j}) - \gamma_{1,\by}G(\by_{i},\by_{j})\gamma_{0}\Phi_{m_i}(\by_{j})  \right), \label{eq:correction-operator}
\end{equation}
where the dependency of~\eqref{eq:correction-operator} on the density vector
$\bol{\varphi}$ is implicit through the density interpolant~$\Phi_{m_i}$. The specific form~\eqref{eq:element-interpolant} of $\Phi_{m_i}$ suggests the definition of the following matrix $\Theta$, of size
$N\times L$:
\begin{equation}
  \label{eq:theta-ell}
  [\Theta]_{i,\ell} = -\half G(\nez_\ell,\by_{i})
  + \sum_{j\in\overline{I}(m_i)} w_{j}\left(\gamma_{0}G(\by_{i},\by_{j})\gamma_1G(\nez_\ell,\by_{j})
  - \gamma_{1,\by}G(\by_{i},\by_{j})\gamma_0G(\nez_\ell,\by_{j})  \right).
\end{equation}
With this definition, the evaluation of $\big[\mathrm{V}_{\alpha,\beta}^{(1)}\bol{\varphi}\big]_i$ becomes
\begin{equation}
  \big[\mathrm{V}_{\alpha,\beta}^{(1)}\bol{\varphi}\big]_i
  = \sum_{\ell=1}^L [\Theta]_{i,\ell} c_{m_i,\ell}, \label{eq:correction-forward-map-fast}
\end{equation}
where only the set of coefficients $\{c_{m_i,\ell}\}$, which depends on the density $\varphi$, needs to be updated when $\varphi$ change. This reformulation suggests an ``offline'' stage for the evaluation of the operator $ V_{\alpha,\beta}$, where the matrix $\Theta$ as well as the factorized forms $\mathrm{M}_m=\mathrm{LQ}_1$ of the matrices $\mathrm{M}_m$ involved in the local (patch-based) systems~\eqref{eq:expansion-coefficients-system} governing the coefficients $c_{m_i,\ell}$, are precomputed. This ``offline'' stage is summarized in Algorithm~\ref{alg:forward-map-precompute}, where the availability of a routine performing (fast) evaluations of $\lf[\mathrm{V}^{(0)}_{\alpha,\beta}\bol{\varphi}\rg]$ on given (discrete) densities $\bol{\varphi}$ is assumed. The latter requirement may involve some additional offline pre-computation work, in particular if $\Hcal$-matrix compression is used.\enlargethispage*{3ex}

Once the required quantities are computed, the correction $\big[\mathrm{V}_{\alpha,\beta}^{(1)}\bol{\varphi}\big]_i$ can be inexpensively calculated (i.e., in order $\Ocal(N)$ complexity) through~\eqref{eq:correction-forward-map-fast}. The cost of the precomputation, on the other hand, depends on the availability of an acceleration routine since evaluations of $\mathrm{V}_{1,0}$ and $\mathrm{V}_{0,-1}$ on $L$ different densities are needed in order to compute $\Theta$ (see line~\ref{alg:line:forward-map} in Algorithm~\ref{alg:forward-map-precompute}). The overall cost of the pre-computation is then of the same order as that of an evaluation of $V^{(0)}_{\alpha,\beta}$.
\begin{algorithm}[b]
  \caption{Pre-computations for operator evaluations}
  \label{alg:forward-map-precompute}
  \begin{algorithmic}[1]
    \REQUIRE Quadrature $\Qcal$, operator $V_{\alpha,\beta}$, routine for (fast) evaluation of $\bol{\varphi}\mapsto\lf[\mathrm{V}^{(0)}_{\alpha,\beta}\bol{\varphi}\rg]$
    \STATE $\left\{ \nez_\ell \right\}_{\ell=1}^L \leftarrow$ compute source locations
    \STATE $\mathrm{B} \leftarrow$ initialize an $N \times L$ matrix
    \STATE $\mathrm{C} \leftarrow$ initialize an $N \times L$ matrix
    \FOR{$\ell=1$ \TO $L$}
    \FOR{$i=1$ \TO $N$}
    \STATE $\big[\mathrm{B}\big]_{i,\ell} \leftarrow G(\nez_\ell,\by_i)$
    \STATE $\big[\mathrm{C}\big]_{i,\ell} \leftarrow \gamma_1G(\nez_\ell,\by_i)$
    \ENDFOR
    \ENDFOR
    \STATE $\Theta \leftarrow \frac{-\mathrm{B}}{2} + \mathrm{V}^{{(0)}}_{1,0}\mathrm{B} - \mathrm{V}^{{(0)}}_{0,-1}\mathrm{C}$ \qquad (columnwise evaluations of the form $\bol{\varphi}\mapsto\lf[\mathrm{V}^{(0)}_{\alpha,\beta}\bol{\varphi}\rg]$) \label{alg:line:forward-map}
    \FOR{$m=1$ \TO $M$}
    \STATE $\mathrm{M} \leftarrow$ assemble interpolation matrix, see algorithm~\ref{alg:interpolant-construction}
    \STATE $\mathrm{L}_m,\mathrm{Q}_m \leftarrow$ precompute LQ decomposition of $M$
    \ENDFOR
    \RETURN $\mathrm{S}^{(0)}, \mathrm{K}^{(0)}, \Theta, \{\mathrm{L}_p,\mathrm{Q}_p\}_{p=1}^M$
  \end{algorithmic}
\end{algorithm}

\subsection{Assembling the integral operator}\label{sec:assembl-integr-oper}

We jsut saw how the density interpolation procedure recasts the operator $\mathrm{V}_{\alpha,\beta}$ as the sum of a dense matrix $\mathrm{V}_{\alpha,\beta}^{(0)}$, and a correction $\mathrm{V}_{\alpha,\beta}^{(1)}$, and only considered evaluations for the latter. We now describe how $\mathrm{V}_{\alpha,\beta}^{(1)}$ may be assembled as a sparse matrix, assuming $\mathrm{S}^{(0)},\mathrm{K}^{(0)},\Theta, \{\mathrm{L}_m, \mathrm{Q}_m\}_{m=1}^M$ to have been pre-computed as per Algorithm~\ref{alg:forward-map-precompute}.

Consider the $i$-th row of $\mathrm{V}_{\alpha,\beta}^{(1)}$. Since $\Phi_{m_i}$ is computed using only values of $\bol{\varphi}_j$ for $j \in I(m_i)$, it is easy to see that $\big[ \mathrm{V}_{\alpha,\beta}^{(1)} \big]_{i,j}=0$ if $j \not \in I(m_i)$, i.e. the matrix $\mathrm{V}_{\alpha,\beta}^{(1)}$ is block-diagonal. Under the assumption that $P_{m_i} = \# I(m_i) \ll N$, $\mathrm{V}_{\alpha,\beta}^{(1)}$ is thus sparse, as expected. To compute the nonzero entries $\big[ \mathrm{V}_{\alpha,\beta}^{(1)} \big]_{i,j}$ of the $i$-th row, we define the row vector $\bol{\Theta}_i = [\Theta_{i,1},\ldots, \Theta_{i,L}]$, and observe that
\begin{equation}
  \big[\mathrm{V}_{\alpha,\beta}^{(1)}\bol{\varphi}\big]_i
 = \sum_{j\in I(m_i)} \big[\mathrm{V}_{\alpha,\beta}^{(1)}\big]_{i,j}\bol{\varphi}_j
 = \sum_{\ell=1}^L \Theta_{i,\ell}c_{m_i,\ell} = \bol{\Theta}_i \mathbf{c}_{m_i}.
\end{equation}
Using~\eqref{eq:expansion-coefficients-system} to express $\mathbf{c}_{m_i}$ in terms of the density $\bol{\varphi}$ yields
\begin{equation}
  \sum_{j\in I(m_i)} \big[\mathrm{V}_{\alpha,\beta}^{(1)}\big]_{i,j}\bol{\varphi}_j
 = \bol{\Theta}_i \mathrm{M}_{m_i}^\dagger \mathrm{D}_{\alpha,\beta} \big\{ \bol{\varphi} \big\}_{m_i}
\end{equation}
where $\big\{ \bol{\varphi} \big\}_{m_i}:=\big\{ \bol{\varphi}_j \big\}^T_{j\in I(m_i)}$.
This gives that the non-zero entries of the $i$-th row are $\big[\mathrm{V}_{\alpha,\beta}^{(1)}\big]_{i,j} = [\bol{\Theta}_{i}\Mcal_{m_i}^\dagger \mathrm{D}_{\alpha,\beta}]_n$ for $1 \leq n \leq P_{m_i}$, $j$ being the global index of the $n$-th quadrature node of $\Gamma_{m_i}$.

It is important to mention that the product $\mathbf{w} = \bol{\Theta}_{i}\mathrm{M}_{m_i}^\dagger \mathrm{D}_{\alpha,\beta}$, which can be interpreted as the regularized quadrature weights, is computed through the $LQ$ decomposition of $\mathrm{M}_{m_i}$ as $\mathbf{w} = \left(\left(\bol{\Theta}_{i}Q^{\star}\right)L^{-1}\right)D_{\alpha,\beta}$, where the parentheses specify the order in which the operations are performed. The explicit inversion of $\mathrm{L}$ is avoided due to the bad conditioning of $\mathrm{M}_{m_i}$, multiplications by $\mathrm{L}^{-1}$ being performed via forward substitutions. As shown in the results of Section~\ref{sec:numerics}, we did not observe any numerical stability issues in our calculations when using the described procedure, and were able to obtain convergence to tolerances smaller than $10^{-10}$ in many of the examples presented without resorting to e.g. higher-precision arithmetic.


\section{Error estimates}\label{sec:error_analysis}

This section aims at justifying and assessing the errors in the approximations introduced in~\eqref{eq:approx_ops} when
the collocation interpolant is used, and implemented by the algorithms developed in Section~\ref{sec:numer-discr}.

In order to examine these errors, it is helpful to consider the following one-dimensional weakly-singular, singular (principal value), and hypersingular (finite part) integral operators:
\begin{subequations}
  \label{eq:1d_integrals}
  \begin{align}
    I_0[g](t) = & ~\int_{a}^{b}g(s)\log|t-s|\de s,                                \\
    I_1[g](t)=  & ~{\rm p.v.}\!\int_{a}^{b}\frac{g(s)}{t-s}\de s,\label{eq:id_PV} \\
    I_2[g](t) = & ~{\rm f.p.}\!\int_{a}^{b}\frac{g(s)}{(t-s)^2}\de s,
  \end{align}
\end{subequations}
where $a < t < b$ and $b-a = \delta  < 1$. Regarding these operators we have the next lemma:
\begin{lemma}\label{lem:1D_IO}
  Let $g\in C^{1,1}[a,b]$. Then the estimates
  \begin{subequations}\begin{align}
      |I_0[g](t)|\leq & ~2  \delta\{1+|\log (\mu\delta)|\}\|g\|_{C^{0}[a,b]},                                             \\
      |I_1[g](t)|\leq & ~ \delta|g|_{C^{0,1}[a,b]}+|\log\mu|\|g\|_{C^0[a,b]},                                    \\
      |I_2[g](t)|\leq & ~\frac{2}{\mu\delta}\|g\|_{C^0[a,b]}+\delta|g|_{C^{1,1}[a,b]}+|\log\mu|\|g\|_{C^1[a,b]},
    \end{align}\end{subequations}
  hold true for  all $t\in [a+\mu\delta,b-\mu\delta]$, where $0<\mu<\frac12$.
\end{lemma}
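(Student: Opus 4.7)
My plan is to treat each of the three estimates separately, using standard decompositions of the density $g$ around the singularity $t$ combined with explicit evaluation of the building-block integrals (integrals of $\log|t-s|$, ${\rm p.v.}\,1/(t-s)$, and ${\rm f.p.}\,1/(t-s)^2$ against polynomials of $s-t$). The key point that makes each bound work is that because $t\in[a+\mu\delta,b-\mu\delta]$, both distances $t-a$ and $b-t$ satisfy $\mu\delta\le t-a,b-t\le(1-\mu)\delta<1$, so logarithms and reciprocals of these quantities are controlled by $|\log(\mu\delta)|$, $|\log\mu|$ and $1/(\mu\delta)$ respectively.

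For the weakly singular operator $I_0$, the bound is obtained by a direct triangle inequality: $|I_0[g](t)|\le\|g\|_{C^0[a,b]}\int_a^b|\log|t-s||\,\mathrm{d}s$. A change of variable $\tau=s-t$ splits the integral into $\int_0^{t-a}|\log\tau|\,\mathrm{d}\tau+\int_0^{b-t}|\log\tau|\,\mathrm{d}\tau$, each handled by the elementary antiderivative $\int_0^x|\log\tau|\,\mathrm{d}\tau=x(1+|\log x|)$ valid for $x\in(0,1)$. Since $t-a,b-t\in[\mu\delta,(1-\mu)\delta]$ and $|\log x|$ is monotone on $(0,1)$, both contributions are bounded by $\delta(1+|\log(\mu\delta)|)$, which yields the claim.

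For the principal-value operator $I_1$ I would use the standard subtraction trick $g(s)=g(t)+(g(s)-g(t))$, exploiting only $g\in C^{0,1}$. The first term produces $g(t)\cdot{\rm p.v.}\!\int_a^b\mathrm{d}s/(t-s)=g(t)\log\bigl((t-a)/(b-t)\bigr)$, whose absolute value is bounded by $\|g\|_{C^0}|\log((1-\mu)/\mu)|\le\|g\|_{C^0}|\log\mu|$ thanks to $\mu<1/2$. The second term becomes the ordinary (no principal value needed) integral of $(g(s)-g(t))/(t-s)$, whose integrand is pointwise bounded by the Lipschitz seminorm $|g|_{C^{0,1}[a,b]}$, giving a contribution of at most $\delta|g|_{C^{0,1}[a,b]}$. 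Adding the two pieces gives the stated bound.

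For the hypersingular operator $I_2$ the idea is the same, now with the second-order expansion $g(s)=g(t)+g'(t)(s-t)+r(s,t)$, where $g\in C^{1,1}$ implies $|r(s,t)|\le\tfrac12|g|_{C^{1,1}}(s-t)^2$. The three resulting pieces are: (i) $g(t)\cdot{\rm f.p.}\!\int_a^b\mathrm{d}s/(t-s)^2$, which I compute explicitly by splitting around $t\pm\varepsilon$ and cancelling the $2/\varepsilon$ divergence, yielding $-1/(t-a)-1/(b-t)$ and hence a contribution bounded by $2\|g\|_{C^0}/(\mu\delta)$; (ii) $g'(t)\cdot{\rm p.v.}\!\int_a^b(s-t)/(t-s)^2\,\mathrm{d}s$, which reduces to the previous logarithmic integral and gives $|\log\mu|\|g\|_{C^1}$; and (iii) the remainder, which is a bona fide integral of $r(s,t)/(t-s)^2$ with integrand bounded by $\tfrac12|g|_{C^{1,1}}$, contributing at most $\tfrac12\delta|g|_{C^{1,1}}\le\delta|g|_{C^{1,1}}$. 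Summing yields the third estimate.

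The only nontrivial step is the explicit evaluation of the two regularized integrals in (i)--(ii), which is a careful but routine $\varepsilon\to0$ computation; I do not anticipate any genuine obstacle. All other bounds are straightforward once the expansion of $g$ of the appropriate order is fixed and the range $t\in[a+\mu\delta,b-\mu\delta]$ is used to control $\log(t-a)$, $\log(b-t)$, $1/(t-a)$ and $1/(b-t)$ uniformly in $t$.
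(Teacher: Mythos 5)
Your bounds for $I_0$ and $I_1$ coincide with the paper's argument: triangle inequality plus explicit evaluation of $\int_a^b|\log|t-s||\,\mathrm{d}s$ for $I_0$, and the standard constant-subtraction $g(s)=g(t)+(g(s)-g(t))$ with $\mathrm{p.v.}\!\int_a^b\mathrm{d}s/(t-s)=\log\bigl((t-a)/(b-t)\bigr)$ for $I_1$.

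For $I_2$ you take a genuinely different route. The paper integrates by parts once, producing
\begin{equation*}
I_2[g](t)=-\frac{g(a)}{t-a}-\frac{g(b)}{b-t}-\mathrm{p.v.}\!\int_a^b\frac{g'(s)}{t-s}\,\mathrm{d}s,
\end{equation*}
so that the boundary terms supply the $2\|g\|_{C^0}/(\mu\delta)$ piece and the remaining integral is simply $I_1[g']$, to which the second estimate is applied with $g'$ in place of $g$. You instead expand $g$ to second order about $t$, splitting $\mathrm{f.p.}\!\int_a^b g(s)/(t-s)^2\,\mathrm{d}s$ into the three regularized integrals against $1$, $(s-t)$, and the $C^{1,1}$ Taylor remainder. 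Both decompositions are correct and land on exactly the same three terms; the paper's version is slightly more economical because it recycles the already-proved $I_1$ bound rather than recomputing the logarithmic piece, while your version has the pedagogical virtue of exhibiting $I_0$, $I_1$, $I_2$ as a uniform family handled by Taylor expansions of increasing order (constant subtraction being the order-one case). Your intermediate steps — the computation $\mathrm{f.p.}\!\int_a^b\mathrm{d}s/(t-s)^2=-1/(t-a)-1/(b-t)$, the reduction $\mathrm{p.v.}\!\int_a^b(s-t)/(t-s)^2\,\mathrm{d}s=-\log\bigl((t-a)/(b-t)\bigr)$, the bound $|r(s,t)|\le\tfrac12|g|_{C^{1,1}}(s-t)^2$ from $g'$ Lipschitz, and the control $|\log((t-a)/(b-t))|\le\log((1-\mu)/\mu)<|\log\mu|$ for $\mu<1/2$ — all check out and yield the stated inequality.
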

\begin{proof} For the first operator $I_{0}$, the integrability of the logarithmic kernel yields
  \begin{equation}
    |I_0[g](t)|\leq \int_{a}^{b}|g(s)\log|t-s||\de s
    \leq \|g\|_{C^{0}[a,b]}\int_{a}^b\!\!|\log|t-s||\de s \leq 2  \delta\{1+|\log (\mu\delta)|\}\|g\|_{C^{0}[a,b]},
  \end{equation}
  where we used  that $\delta<1$. For the integral $I_{1}[g](t)$ we have that the Cauchy principal-value integral can be expressed as
  \begin{align}
    I_1[g](t) = \int_{a}^{b} \frac{g(s)-g(t)}{t-s} \de s + g(t)~{\rm p.v.}\!\int_{a}^{b}\frac{1}{t-s} \de s
    = \int_{a}^{b} \frac{g(s)-g(t)}{t-s} \de s + g(t)\log\left(\frac{t-a}{b-t}\right),
  \end{align}
  which can be bounded as $$|I_1[g](t)| \leq \delta|g|_{C^{0,1}[a,b]}+|\log\mu|\|g\|_{C^0[a,b]}.$$
  Finally, for the finite-part integral operator we have that it can be expressed as
  $$
    I_2[g](t) = -\frac{g(a)}{t-a}-\frac{g(b)}{b-t}-{\rm p.v.}\int_{a}^{b} \frac{g^{\prime}(s)}{t-s} \mathrm{d} s,
  $$
  and hence, using the bound for the principal value integral, we obtain
  \begin{equation}\label{eq:estimate3}
    |I_2[g](t)| \leq \frac{2}{\mu\delta}\|g\|_{C^0[a,b]}+\delta|g|_{C^{1,1}[a,b]}+|\log\mu|\|g\|_{C^1[a,b]}.
  \end{equation}
  The proof is now complete.
\end{proof}

As it turns out, Lemma~\ref{lem:1D_IO} can be used to estimate the neglected quantities in the approximations~\eqref{eq:approx_ops} of each one of the four integral operators and PDEs considered, in the $d=2$ case. Indeed, in order to estimate such  errors we utilize the following result:\enlargethispage*{5ex}
\begin{lemma}\label{lem:d2case}
  Let $\Phi(\cdot;\nex)$ be the collocation density interpolant of a smooth density function $\varphi$ at $\nex\in \Gamma_h(\nex)\subset\Gamma\subset\R^2$, constructed from a set of distinct collocation points $\{\ney_j\}_{j=1}^P\subset \Gamma_h(\nex)$. Let also $\chi:[a,b]\to \Gamma_h(\nex)$ be a smooth bijective parametrization of $\Gamma_h(\nex)$ and let $\nex=\chi(t)$, $t\in(a,b)$.  Then, the interpolation error functions
  \begin{equation}\label{eq:functions_to_estimate}
    e_\alpha(s) := \alpha\varphi(\chi(s)) - \gamma_0\Phi(\chi(s);\nex)\andtext e_\beta(s):=\beta\varphi(\chi(s)) - \gamma_1\Phi(\chi(s);\nex),
  \end{equation}
  satisfy
  \begin{align}
    |e^{(k)}_\alpha(s)|\lesssim h^{P-k}\andtext |e^{(k)}_\beta(s)|\lesssim h^{P-k},
  \end{align}
  for all $s\in [a,b]$ and $k=0,\ldots, P$,  provided   $|F^{(k)}_\ell(s)|\leq c_\ell$, with $F_\ell(s) = \gamma_\ell\Phi(\chi(s);\nex)$, for some $h$-independent constants $c_\ell$, $\ell=0,1$.
\end{lemma}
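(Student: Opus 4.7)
The plan is to view $e_\alpha$ and $e_\beta$ as smooth one-dimensional functions on $[a,b]$ that vanish at the $P$ parameter values $t_j:=\chi^{-1}(\ney_j)$, and to deduce the claimed estimates from an iterated Rolle-type argument. Indeed, the collocation conditions~\eqref{eq:collocation_conds} state precisely that $\gamma_0\Phi(\ney_j;\nex)=\alpha\varphi(\ney_j)$ and $\gamma_1\Phi(\ney_j;\nex)=\beta\varphi(\ney_j)$ for $j=1,\ldots,P$, so $e_\alpha(t_j)=e_\beta(t_j)=0$ for all $j$. Since both error functions are $C^\infty$ on $[a,b]$ (as linear combinations of compositions of smooth maps and of $G(\chi(\cdot),\nez_\ell)$, evaluated away from the source points $\nez_\ell$), one is in the standard framework of polynomial-like interpolation residuals.

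The first step is to invoke Rolle's theorem inductively: if a $C^{k}$ function has $P-k+1$ distinct zeros in an interval, its derivative has at least $P-k$ distinct zeros there. Applying this $k$ times to $e_\alpha$ yields a set of points $\{t_j^{(k)}\}_{j=1}^{P-k}\subset[a,b]$ at which $e_\alpha^{(k)}$ vanishes, for each $0\leq k\leq P-1$. For any $s\in[a,b]$, picking the nearest $t_j^{(k)}$ and using the fundamental theorem of calculus gives
\begin{equation}
|e_\alpha^{(k)}(s)|=\Big|\int_{t_j^{(k)}}^{s}e_\alpha^{(k+1)}(\tau)\de\tau\Big|\leq (b-a)\,\|e_\alpha^{(k+1)}\|_{C^0[a,b]}.
\end{equation}
Iterating this bound from $k$ up to $P-1$ produces $\|e_\alpha^{(k)}\|_{C^0[a,b]}\leq (b-a)^{P-k}\|e_\alpha^{(P)}\|_{C^0[a,b]}$, and likewise for $e_\beta$.

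It then remains to control $\|e_\alpha^{(P)}\|_{C^0[a,b]}$ and $\|e_\beta^{(P)}\|_{C^0[a,b]}$ by a constant independent of $h$, and to convert $(b-a)$ into $h$. The latter is immediate: because $\chi$ is a smooth bijection onto $\Gamma_h(\nex)$, its derivative is bounded away from zero on a fixed neighborhood of $t$, so $(b-a)\lesssim h$. For the former, expanding $e_\alpha^{(P)}=\alpha(\varphi\circ\chi)^{(P)}-(\gamma_0\Phi(\chi(\cdot);\nex))^{(P)}=\alpha(\varphi\circ\chi)^{(P)}-F_0^{(P)}$ and using on one hand the smoothness of $\varphi$ and $\chi$ (which bounds $(\varphi\circ\chi)^{(P)}$ uniformly in $h$) and on the other hand the assumed uniform bound $|F_0^{(P)}(s)|\leq c_0$ gives the desired $h$-independent estimate; the argument for $e_\beta^{(P)}$ is identical with $\beta$ and $F_1$.

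The one delicate point is the hypothesis $|F_\ell^{(k)}(s)|\leq c_\ell$ with constants independent of $h$: it is nontrivial because the coefficients $c_\ell(\nex)$ in~\eqref{eq:approx_interp_exp} come from solving the ill-conditioned local system~\eqref{eq:expansion-coefficients-system}, and one might fear that they grow as $h\to0$. However the lemma takes this bound as an assumption (reflecting the empirically observed stability of the pseudo-inverse procedure), so no further analysis is required here; it is precisely this hypothesis that translates the one-dimensional Rolle estimate into a usable $O(h^{P-k})$ bound. Aside from this, the only routine technicality is the chain-rule bookkeeping when differentiating $\varphi\circ\chi$ and $F_\ell=\gamma_\ell\Phi(\chi(\cdot);\nex)$ up to order $P$, which is absorbed into the implied constants.
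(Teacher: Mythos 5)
Your proof is correct and rests on the same underlying mechanism as the paper's: iterated Rolle's theorem applied to a smooth function that vanishes at the $P$ distinct collocation parameter values, combined with the assumed $h$-independent bound on $F_\ell^{(P)}$ to control the top derivative and the smooth bijectivity of $\chi$ to trade $(b-a)$ for $h$. The only packaging difference is that the paper introduces the auxiliary Lagrange polynomial $L$ (which, by the collocation conditions~\eqref{eq:collocation_conds}, interpolates both $\alpha\varphi\circ\chi$ and $F_0$ at the nodes) in order to split $e_\alpha^{(k)}$ by the triangle inequality and cite a textbook derivative-error bound, whereas you apply Rolle's theorem directly to $e_\alpha$ itself — a slightly more self-contained route yielding the same $O(h^{P-k})$ estimate.
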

\begin{proof}
Let $L(s):[a,b]\to\C$ be the $(P-1)$th-degree Lagrange interpolation polynomial of the function  $f(s) = \alpha\varphi(\chi(s))$ in the parameter space at the (distinct) points $\{s_j\}_{j=1}^P\subset (a,b)$ given by $s_j = \chi^{-1}(\nex_j)$, $j=1,\ldots, P$. By the interpolation-collocation conditions~\eqref{eq:collocation_conds} we have that  $L(s)$ is also the interpolation polynomial of $F_0(s) = \Phi(\chi(s);\nex)$. Therefore, using a well-known result of Lagrange interpolation~\cite[Sec.~5, Theorem~1]{isaacson:1994}, we  obtain
\begin{multline}
      \big|e^{(k)}_\alpha(s) \big| = \big| f^{(k)}(s) - F^{(k)}_0(s) \big|
 \leq |f^{(k)}(s)-L^{(k)}(s)|+|F^{(k)}_0(s)-L^{(k)}(s)| \\
 \leq \frac{(b-a)^{P-k}}{(P-k)!}(c_f + c_0),\quad k=0,\ldots, P-1, \label{eq:ineq_d2}
\end{multline}
where $|f^{(P)}(s)|\leq c_f$ and $|F^{(P)}_0(s)|\leq c_0$ for $s\in [a,b]$. Now, since $(b-a)\leq c_\chi h$, where $c_\chi$ is the Lipschitz continuity constant of $\chi^{-1}$, we conclude that  $|e^{(k)}_\alpha(s)| \lesssim h^{P-k}$.  The terms $e_\beta^{(k)}(s)$ can be estimated in a similar manner.
\end{proof}

We are now in a position to present the main result of this section:
\begin{theorem}\label{tm:d2case} Let $\Phi(\cdot;\nex)$ as in Lemma~\ref{lem:d2case} and assume that ${\rm dist}(\nex,\p \Gamma_h(\nex))>\mu h$ for some $h$-independent constant~$0<\mu<1$. Then, the error estimates
  \begin{subequations}\begin{align}
      \big|V_{\alpha,\beta}[\varphi](\nex)-\widetilde V_{\alpha,\beta}[\varphi](\nex)\big|\lesssim & ~h^{P+1}|\log h|\quad\mbox{and}\label{eq:SL_ERROR} \\
      \big|W_{\alpha,\beta}[\varphi](\nex)-\widetilde W_{\alpha,\beta}[\varphi](\nex)\big|\lesssim & ~h^{P-1}\label{eq:HS_ERROR}
    \end{align}
    hold for the Laplace and Helmholtz integral operators, and
    \begin{align}
      \big|V_{\alpha,\beta}[\varphi](\nex)-\widetilde V_{\alpha,\beta}[\varphi](\nex)\big|\lesssim & ~ h^{P}\quad\mbox{and}\label{eq:SL_ERROR_VEC}  \\
      \big|W_{\alpha,\beta}[\varphi](\nex)-\widetilde W_{\alpha,\beta}[\varphi](\nex)\big|\lesssim & ~ h^{P-1}\label{eq:HS_ERROR_VEC}
    \end{align}\label{eq:d2case}\end{subequations}
  hold for the elastostatic and elastodynamic integral operators, where the non-singular approximations  $\widetilde V_{\alpha,\beta}$ and $\widetilde W_{\alpha,\beta}$ are defined in~\eqref{eq:approx_ops}.
\end{theorem}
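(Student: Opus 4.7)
The plan is to exploit the exact Green-type identities of Lemma~\ref{lm:reg} and contrast them with the definitions in~\eqref{eq:approx_ops}. Since those definitions simply discard the contributions over the local patch $\Gamma_h(\nex)$, and since the jump terms $\half\gamma_0\Phi(\nex;\nex)$ and $\half\gamma_1\Phi(\nex;\nex)$ are identical on both sides, the errors collapse to
\begin{equation*}
V_{\alpha,\beta}[\varphi](\nex) - \widetilde V_{\alpha,\beta}[\varphi](\nex)
 = \int_{\Gamma_h(\nex)}\bigl(\gamma_{1,\by}G(\nex,\by)\bigr)^{\!\top} e_\alpha(\by)\de s(\by)
 - \int_{\Gamma_h(\nex)} G(\nex,\by) e_\beta(\by)\de s(\by),
\end{equation*}
and an analogous formula for $W_{\alpha,\beta}-\widetilde W_{\alpha,\beta}$ in which the kernels are replaced by $\gamma_{1,\nex}\gamma_{1,\by}G(\nex,\by)$ and $\gamma_{1,\nex}G(\nex,\by)$. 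Here $e_\alpha$ and $e_\beta$ are precisely the interpolation error functions introduced in Lemma~\ref{lem:d2case}.

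Next, I would pull back these local patch integrals via the smooth bi-Lipschitz bijection $\chi:[a,b]\to\Gamma_h(\nex)$ with $\nex=\chi(t)$ supplied by Lemma~\ref{lem:d2case}. Because $|\chi(t)-\chi(s)|\simeq|t-s|$ and $\delta:=b-a\lesssim h$, and since the hypothesis $\operatorname{dist}(\nex,\partial\Gamma_h(\nex))>\mu h$ translates, up to the Lipschitz constants of $\chi$, into $t\in[a+\tilde\mu\delta,b-\tilde\mu\delta]$ for some $\tilde\mu\in(0,1/2)$, the resulting univariate integrals fall squarely within the scope of Lemma~\ref{lem:1D_IO}. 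Guided by Table~\ref{tab:singularity-types}, the pulled-back kernels collapse, up to smooth bounded multipliers (arising from $\chi'$, the induced normal field, and the tensorial structure of $G$ in the elastic case), onto the canonical singular kernels $\log|t-s|$, $(t-s)^{-1}$, or $(t-s)^{-2}$.

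Combining the derivative bounds $|e_\alpha^{(k)}|,|e_\beta^{(k)}|\lesssim h^{P-k}$ from Lemma~\ref{lem:d2case} with the integral estimates of Lemma~\ref{lem:1D_IO}, I would then dispatch the four cases in turn. For Laplace/Helmholtz $V$, the $S$-contribution is bounded via $I_0$ by $\delta(1+|\log(\tilde\mu\delta)|)\|e_\beta\|_{C^0}\lesssim h^{P+1}|\log h|$, while the $K$-contribution, whose kernel is $\mathcal{O}(1)$, integrates trivially to $h^{P+1}$, yielding~\eqref{eq:SL_ERROR}. For Laplace/Helmholtz $W$, applying $I_2$ to the hypersingular $T$-term gives $h^{-1}\|e_\alpha\|_{C^0}+h\,|e_\alpha|_{C^{1,1}}+|\log\tilde\mu|\|e_\alpha\|_{C^1}\lesssim h^{P-1}$, which dominates the $\mathcal{O}(h^{P+1})$ contribution of the $K'$-term and gives~\eqref{eq:HS_ERROR}. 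For elastic $V$, the $I_1$ bound on the $R^{-1}$-type $K$-term yields $\delta\,|e_\alpha|_{C^{0,1}}+|\log\tilde\mu|\|e_\alpha\|_{C^0}\lesssim h^P$, dominating the $I_0$-based $S$-contribution and yielding~\eqref{eq:SL_ERROR_VEC}. Finally, for elastic $W$, the $T$-term is bounded by $I_2$ as $h^{P-1}$ and dominates the $I_1$-based $\mathcal{O}(h^P)$ estimate for the $K'$-term, giving~\eqref{eq:HS_ERROR_VEC}.

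The main technical obstacle is justifying rigorously the reduction of the surface kernels, after the $\chi$-pullback, to the three canonical univariate singular forms treated by Lemma~\ref{lem:1D_IO}: the smooth non-singular factors (geometric Jacobians, normals, elasticity tensor components) must be incorporated into effective densities whose $C^k$-norms remain controlled by Lemma~\ref{lem:d2case}, which is legitimate under the $C^\infty$ patch parametrization assumed at the end of Section~\ref{sec:quadrature}. A secondary subtlety is the endpoint contribution $g(a)/(t-a)+g(b)/(b-t)$ that arises in the identity used to derive the $I_2$ bound: it is finite thanks to the buffer hypothesis $\operatorname{dist}(\nex,\partial\Gamma_h(\nex))>\mu h$, but must be carefully tracked since $e_\alpha$ and $e_\beta$ vanish only at the interior collocation nodes $\{\ney_j\}_{j=1}^P$ and not at the patch endpoints.
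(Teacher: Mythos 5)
Your proposal follows essentially the same route as the paper's proof: both write the error $V_{\alpha,\beta}-\widetilde V_{\alpha,\beta}$ (and likewise for $W$) as local-patch integrals of the kernel against the interpolation error functions $e_\alpha,e_\beta$, pull back via the smooth parametrization $\chi$ to the canonical univariate singular integrals of Lemma~\ref{lem:1D_IO}, and then combine those bounds with the derivative estimates of Lemma~\ref{lem:d2case}, with all the resulting exponents matching. You are, if anything, more systematic than the paper, which carries out the Laplace $S$ and $T$ cases explicitly and then dispatches the remaining operators informally via Table~\ref{tab:singularity-types}, whereas you track the $I_0/I_1/I_2$ reductions for all four combinations.
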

\begin{proof}
  First,  we  prove the assertion for the Laplace single-layer and hypersingular operators, which are given by
  \begin{align}
    V_{0,-1}[\varphi](\nex) = S[\varphi](\nex)= & -\frac{1}{2 \pi}\int_{\Gamma}\log|\nex-\ney| \varphi(\ney) \mathrm{d} s(\ney)\quad\mbox{and}                                                                                                                                                                                                                                                           \\
    W_{1,0}[\varphi](\nex) = T[\varphi](\nex)=  & ~\frac{1}{2 \pi} {\rm f.p.}\int_{\Gamma}\left\{\frac{\nor(\ney) \cdot \nor(\nex)}{|\nex-\ney|^{2}}-2 \frac{\nor(\ney) \cdot(\nex-\ney)(\nex-\ney) \cdot \nor(\nex)}{|\nex-\ney|^{4}}\right\} \varphi(\ney) \mathrm{d} s(\ney),
  \end{align}
  respectively.  As in Lemma~\ref{lem:d2case}, we let $\chi:[a,b]\to \Gamma_h(\nex)$ denote a local smooth parametrization of $\Gamma_h(\nex)$. Note that by virtue of the injectivity and smoothness of $\chi$, there exist constants $c_\chi$ and $\tilde c_\chi$ such that $\tilde c_\chi h\leq \delta\leq c_\chi h $ with $\delta = b-a$.

  The error in the approximation of the single-layer operator is given by
  \begin{multline}
      (V_{0,-1}-\widetilde V_{0,-1})[\varphi](\nex)
 = -\frac{1}{2\pi}\int_{\Gamma_h(\nex)}\log|\nex-\ney|\lf\{\varphi(\ney)-\gamma_1\Phi(\ney;\nex)\rg\}\de s(\ney) \\
   +\frac{1}{2\pi}\int_{\Gamma_h(\nex)}\frac{(\nex-\ney)\cdot\nor(\ney)}{|\nex-\ney|^2}\gamma_0\Phi(\ney;\nex)\de s(\ney). \label{eq:errror_SL}
  \end{multline}
  Employing the local curve parametrization $\chi$, the first integral can be recast as
  \begin{multline}
      -\frac{1}{2\pi}\int_{\Gamma_h(\nex)}\log|\nex-\ney|\lf\{\varphi(\ney)-\gamma_1\Phi(\ney;\nex)\rg\}\de s(\ney) \\
      =I_0[g_0](t) - \frac{1}{4\pi}\int_{a}^b\log\lf(\frac{|\chi(t)-\chi(s)|^2}{(t-s)^2}\rg)\lf\{\varphi(\chi(s)) - \gamma_1\Phi(\chi(s);\nex)\rg\})|\chi'(s)|\de s ,\label{eq:errror_SL2}
  \end{multline}
  where $\nex=\chi(t)$ and
  $$
    g_0(s) =-\frac{1}{2\pi}|\chi'(s)|\lf\{\varphi(\chi(s)) - \gamma_1\Phi(\chi(s);\chi(t))\rg\}.
  $$
  Since the last integrals in~\eqref{eq:errror_SL} and~\eqref{eq:errror_SL2} can be bounded as $\lesssim  h^{P+1}$ using Lemma~\ref{lem:d2case}, the leading error term in~\eqref{eq:errror_SL}, for small values of $h>0$, is the weakly-singular integral $I_0[g_0](t)$. Now, in view of Lemma~\ref{lem:1D_IO}, we have
  \begin{equation}\label{eq:interm1}
    |I_0[g_0](t)|\leq 2  \delta\{1-\log (\tilde \mu\delta)\}\|g_0\|_{C^{0}[a,b]}\lesssim h |\log h|\|g_0\|_{C^0[a,b]},
  \end{equation}
  for some constant $\tilde\mu>0$ depending on $\mu$ and $c_{\chi}$. The norm of $g_0$ can be estimated using Lemma~\ref{lem:d2case} and the smoothness of $|\chi'|$,  to achieve $\|g_0\|_{C^0[a,b]}\lesssim \delta^{P}\lesssim  h^{P}$. Therefore, from~\eqref{eq:errror_SL},~\eqref{eq:errror_SL2}, and~\eqref{eq:interm1}, we  obtain
  $$\lf|(V_{0,-1}-\widetilde V_{0,-1})[\varphi](\nex)\rg|\lesssim  h^{P+1}|\log h|,$$
  which proves the bound in~\eqref{eq:SL_ERROR} for the Laplace single-layer operator.

  The error in the approximation of the hypersingular operator, on the other hand, is given by
  \begin{align}
      (W_{1,0}-\widetilde W_{1,0})[\varphi](\nex)
      &= -\frac{1}{2 \pi} {\rm f.p.}\int_{\Gamma_h(\nex)}\frac{\nor(\ney) \cdot \nor(\nex)}{|\nex-\ney|^{2}} \lf\{\varphi(\ney)-\gamma_0\Phi(\ney;\nex)\rg\} \mathrm{d} s(\ney) \notag \\
      & \qquad +\frac{1}{\pi} \int_{\Gamma_h(\nex)}\frac{\nor(\ney) \cdot(\nex-\ney)(\nex-\ney) \cdot \nor(\nex)}{|\nex-\ney|^{4}}  \lf\{\varphi(\ney)-\gamma_0\Phi(\ney;\nex)\rg\} \mathrm{d} s(\ney) \notag \\
      & \qquad -\frac{1}{2\pi}\int_{\Gamma_h(\nex)}\frac{(\nex-\ney)\cdot\nor(\nex)}{|\nex-\ney|^2}\gamma_1\Phi(\ney;\nex)\de s(\ney). \label{eq:hyper_bound}
    \end{align}
  The last two integrals in~\eqref{eq:hyper_bound} can be bounded as $\lesssim  h^{P+1}$ using Lemma~\ref{lem:d2case}. Therefore, the leading term in~\eqref{eq:hyper_bound} as $h\to0$ is the finite-part integral, that can be recast as
  \begin{equation}
    -\frac{1}{2 \pi} {\rm f.p.}\int_{\Gamma_h(\nex)}\frac{\nor(\ney) \cdot \nor(\nex)}{|\nex-\ney|^{2}} \lf\{\varphi(\ney)-\gamma_0\Phi(\ney;\nex)\rg\} \mathrm{d} s(\ney)= I_2[g_2](t),
  \end{equation}
  where
  \begin{align}
    g_2(s) = & ~\frac{1}{2\pi}\frac{(t-s)^2}{|\chi(t)-\chi(s)|^{2}}\nor(\chi(s)) \cdot \nor(\chi(t))|\chi'(s)|\lf\{\varphi(\chi(s)) - \Phi(\chi(s);\chi(t))\rg\}.
  \end{align}
  From Lemma~\ref{lem:1D_IO}, we have
  \begin{equation}\label{eq:interm2}
    |I_2[g_2](t)|\leq \frac{2}{\tilde\mu\delta}\|g_2\|_{C^0[a,b]}+\delta|g_2|_{C^{1,1}[a,b]}-2\log(\tilde\mu)\|g_2\|_{C^1[a,b]},
  \end{equation}
  where using the result of Lemma~\ref{lem:d2case}, the norms and semi-norm of $g_2$ can be estimated as
  \begin{equation}\label{eq:interm3}
    \|g_2\|_{C^0[a,b]}\lesssim  h^P,\quad |g_2|_{C^{1,1}[a,b]}\leq \|g_2\|_{C^2[a,b]}\lesssim  h^{P-2}, \andtext \|g_2\|_{C^1[a,b]}\lesssim  h^{P-1}.
  \end{equation}
  Combining~\eqref{eq:hyper_bound},~\eqref{eq:interm2}, and~\eqref{eq:interm3}, we thus arrive at
  $$\lf|(W_{1,0}-\widetilde W_{1,0})[\varphi](\nex)\rg|\lesssim  h^{P-1},$$
  which proves the bound in~\eqref{eq:HS_ERROR} for the Laplace hypersingular operator.

The proof  for the remaining integral operator can be performed by taking into account the leading kernel singularity associated to each kernel, which are summarized in Table~\ref{tab:singularity-types}. In addition, the (Helmholtz, elastodynamic) frequency-domain kernels have the same singular part as their zero-frequency counterpart, i.e. the kernel differences are bounded, so that estimates found for the zero-frequency singular operators carry over to their frequency-domain analogs.
\end{proof}

Finally, we comment on how this analysis can be extended to the $d=3$ case, corresponding to surfaces in three dimensions.  First, Lemma~\ref{lem:1D_IO}  can be used---through a change of variables to polar coordinates centered at $t= \chi^{-1}(\nex)\subset\R^2$ in the parameter space---to estimate the errors in the approximations~\eqref{eq:approx_ops} in terms of the size $h>0$ of $\Gamma_h(\nex)$ and the norm of the interpolation errors functions~\eqref{eq:functions_to_estimate} over $\Gamma_h(\nex)$. Then, an interpolation result akin to Lemma~\ref{lem:d2case} is needed.  On this regards we distinguish between surface discretizations based on quadrilateral and triangular geometric patches. In the former setting, an analysis similar to the one carried out above could  be performed to derive error estimates of the form
\begin{equation}\label{eq:bounds_3D}
  |\p^{\theta}e_\alpha(s)|\lesssim h^{p-|\theta|}\andtext|\p^{\theta}e_\beta(s)|\lesssim h^{p-|\theta|},
\end{equation}
for the interpolation errors functions~\eqref{eq:functions_to_estimate}, where $s=(s_1,s_2)$, $\theta=(\theta_1,\theta_2)$, and $|\theta|=\theta_1+\theta_2$,  for sets of $P=p^2$ collocation points given by tensor products of one dimensional grids consisting of $ p$ points per dimension. In the latter setting, in turn, results on Lagrange interpolation over triangles~(e.g.,~\cite[Sec. 5.1.1]{atkinson1997numerical}) suggest that choosing a total of $P=p(p+1)/2$ collocation points inside the triangle, in such a way that they uniquely determine a Lagrange interpolation polynomial of the form $L(\xi) = \sum_{0\leq|\theta|\leq p-1}c_{\theta}\xi^{\theta}$ in the parameter space, one could achieve interpolation error bounds such as~\eqref{eq:bounds_3D} with $p =(\sqrt{8P+1}-1)/2$. Finally, assuming the error estimates~\eqref{eq:bounds_3D} hold for the three-dimensional collocation density interpolant and following the arguments in the proof of Theorem~\ref{tm:d2case}, we arrive at the following error estimates for the three-dimensional Laplace and Helmholtz integral operators
\begin{subequations}\begin{align}
      \lf|V_{\alpha,\beta}[\varphi](\nex)-\widetilde V_{\alpha,\beta}[\varphi](\nex)\rg|\lesssim & ~h^{p+1}\quad\mbox{and}\label{eq:SL_ERROR_3D} \\
      \lf|W_{\alpha,\beta}[\varphi](\nex)-\widetilde W_{\alpha,\beta}[\varphi](\nex)\rg|\lesssim & ~h^{p-1},\label{eq:HS_ERROR_3D}
    \end{align}
and the following for the elastostatic and elastodynamic integral operators
    \begin{align}
      \lf|V_{\alpha,\beta}[\varphi](\nex)-\widetilde V_{\alpha,\beta}[\varphi](\nex)\rg|\lesssim & ~ h^{p}\quad\mbox{and}\label{eq:SL_ERROR_VEC_3D}  \\
      \lf|W_{\alpha,\beta}[\varphi](\nex)-\widetilde W_{\alpha,\beta}[\varphi](\nex)\rg|\lesssim & ~ h^{p-1},\label{eq:HS_ERROR_VEC_3D}
\end{align}\label{eq:d3case}\end{subequations}
where the parameter $p$ depends on the total number of quadrature/collocation points $P$, and on the shape of the geometric patches used in the discretization of the surface~$\Gamma$.

The results shown thereafter in Figures~\ref{fig:convergence_SLDL} and~\ref{fig:convergence_ADLHS} demonstrate that the error estimates in~\eqref{eq:d2case} and in~\eqref{eq:d3case} are in fact achieved in practice.\enlargethispage*{5ex}

\section{Numerical examples}\label{sec:numerics}

In this section we present a variety of numerical examples designed to validate the proposed high-order kernel regularization procedure, as well as demonstrate its capability to treat two- and three-dimensional problems of either scalar or vector nature.
\begin{figure}[b]
  \begin{subfigure}{1\linewidth}
  \centering
  \includegraphics[width=0.49\textwidth]{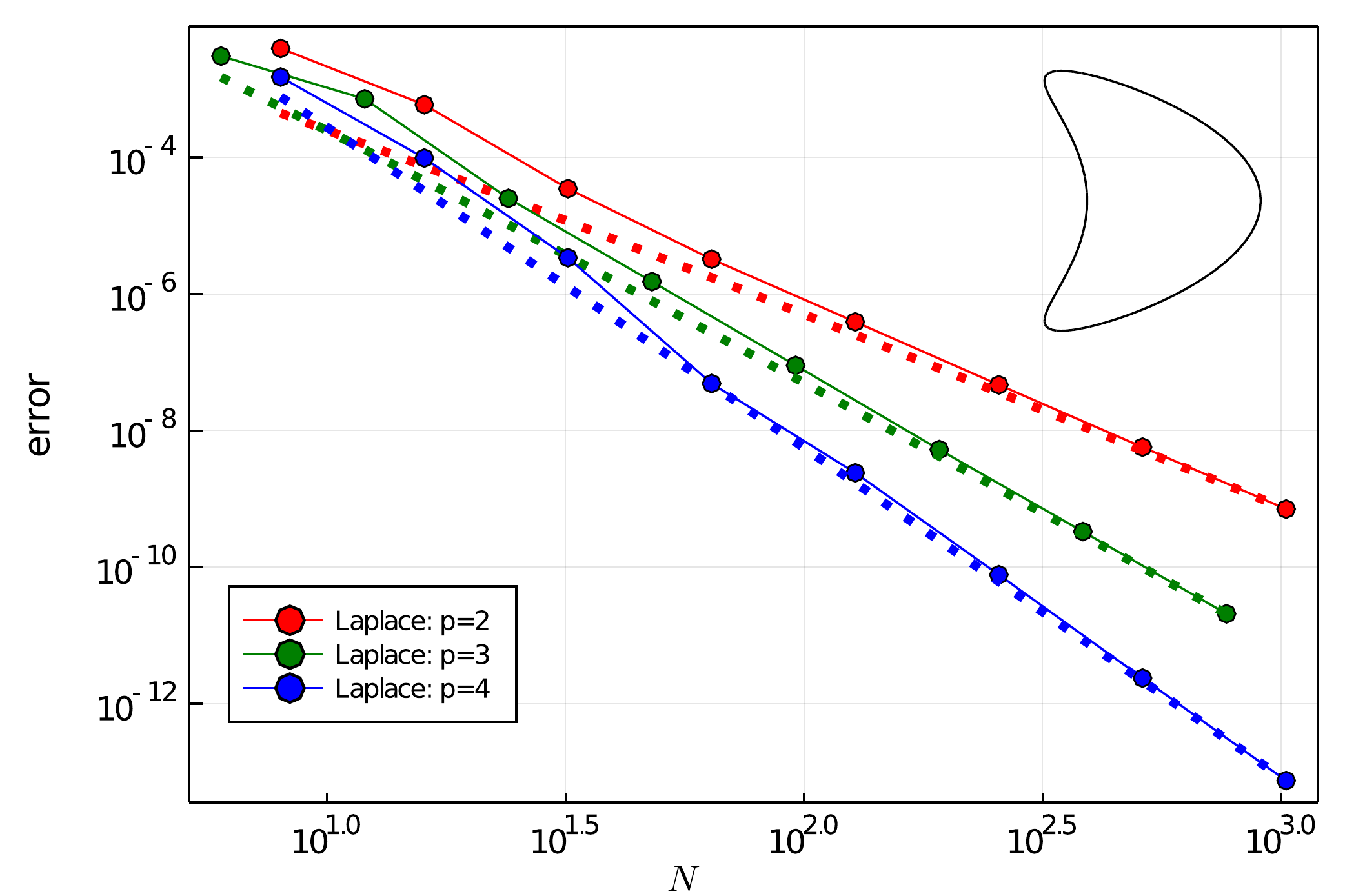}\ \ \includegraphics[width=0.49\textwidth]{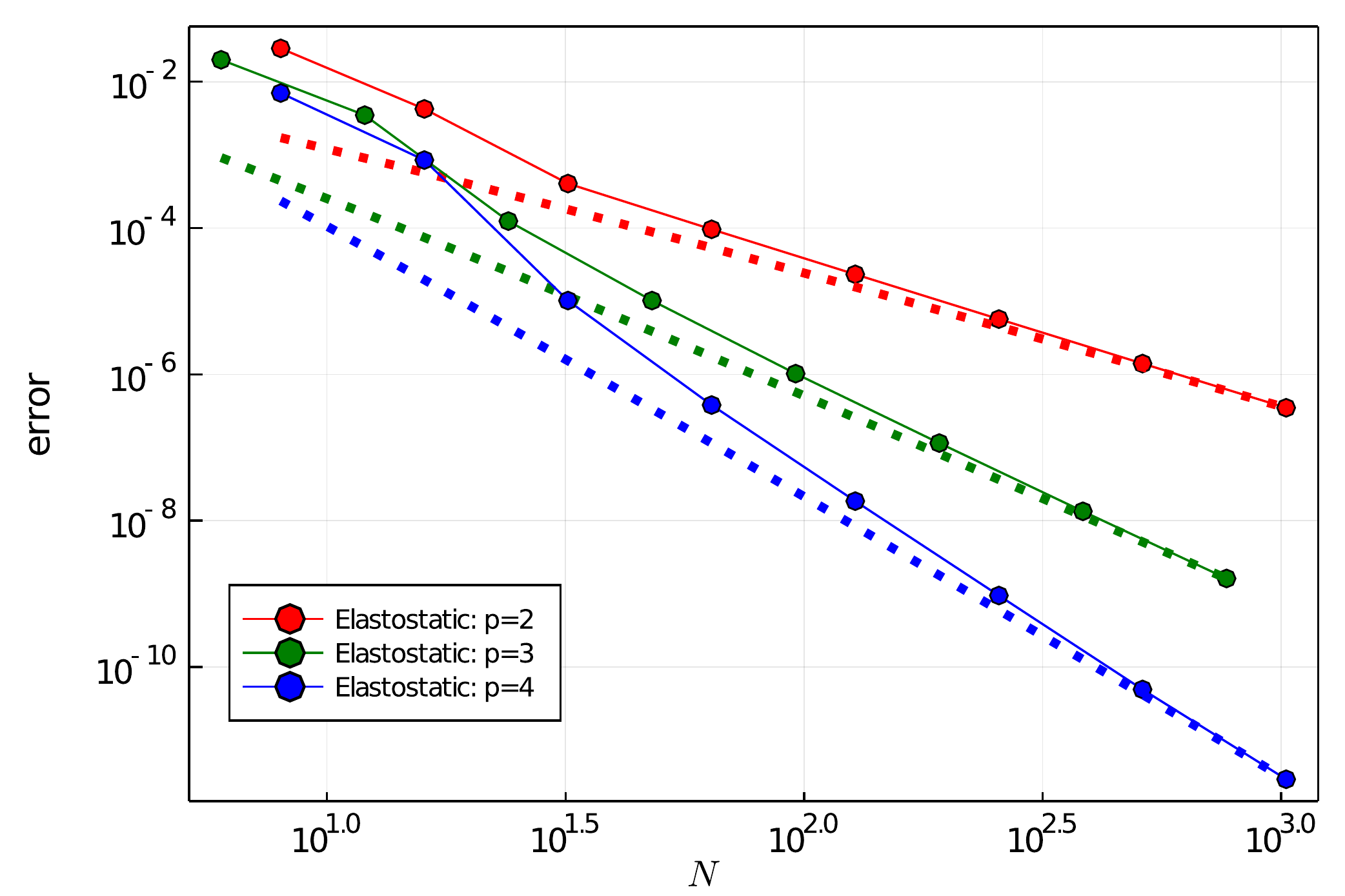}
       \caption{Convergence in two dimensions. The dotted lines indicate orders ${p\!+\!1}$ for the Laplace operator (left) and $p$ for the elastostatic operator (right). Up to logarithmic terms, the observed convergence orders agree with estimates~\eqref{eq:SL_ERROR} and \eqref{eq:SL_ERROR_VEC} in Theorem~\ref{tm:d2case}, with $h\propto N^{-1}$.}
  \label{fig:convergence-greens-2d}
  \end{subfigure}
  \begin{subfigure}{1\linewidth}
  \includegraphics[width=0.49\textwidth]{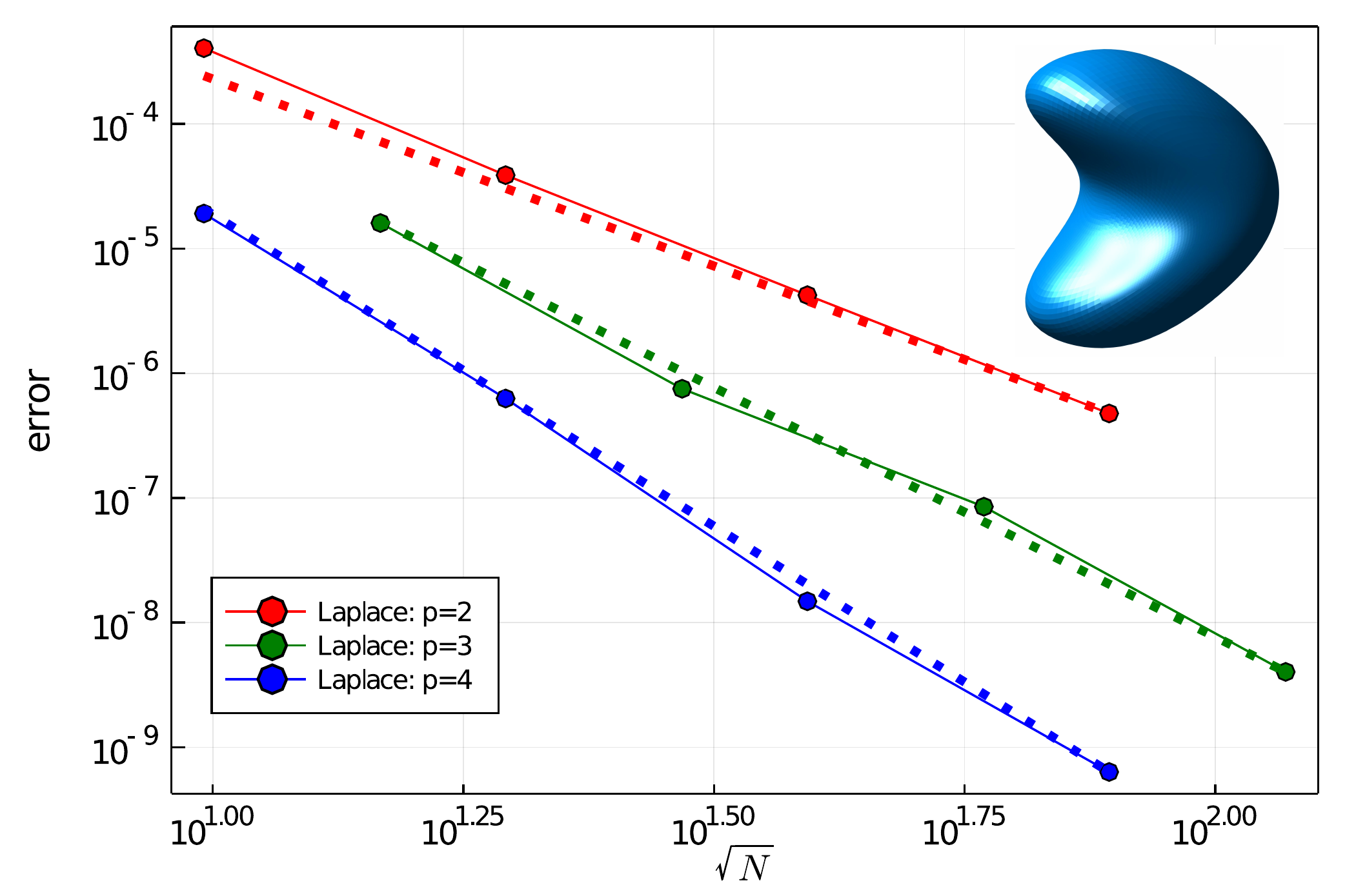}\ \ \includegraphics[width=0.49\textwidth]{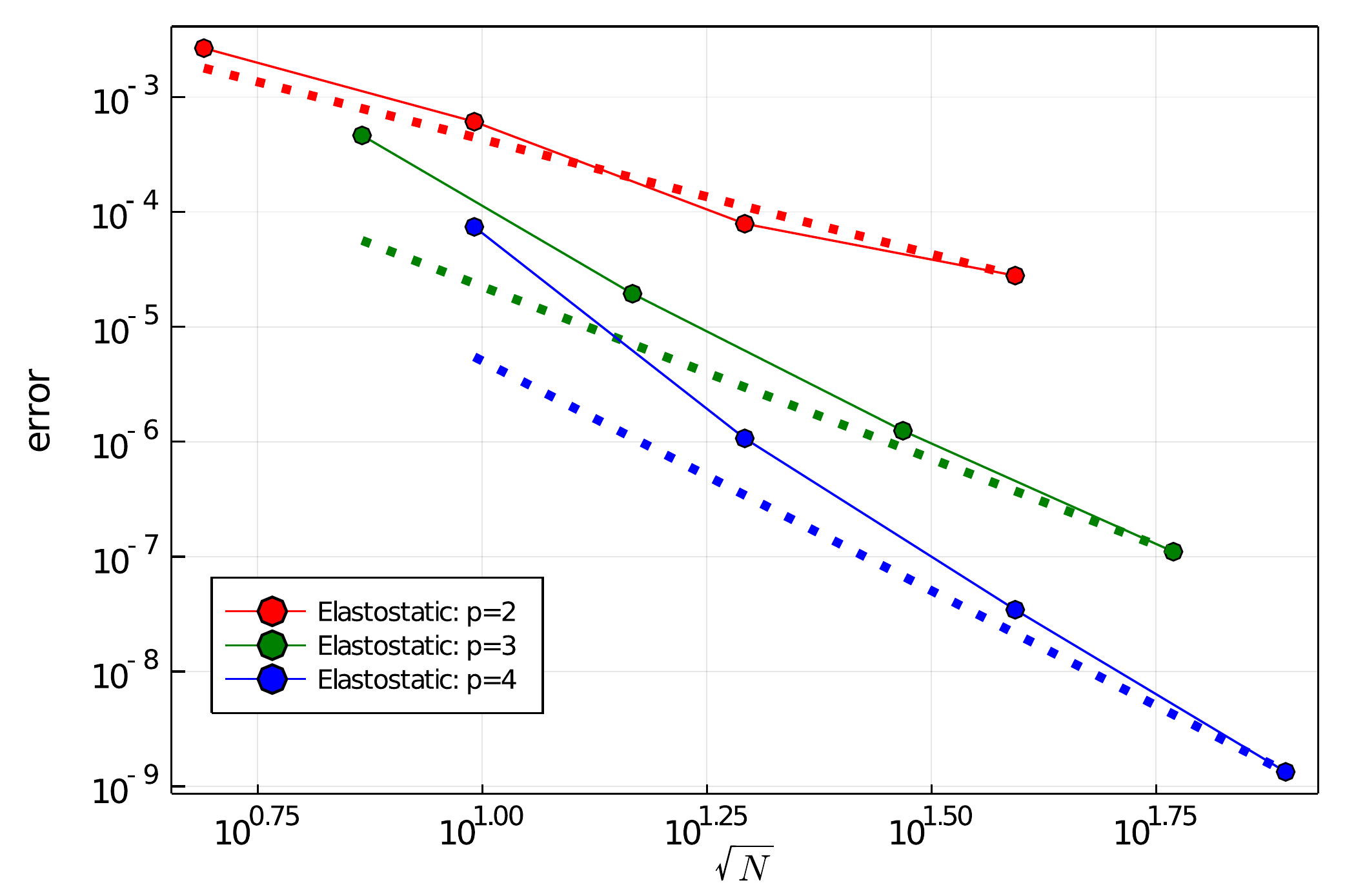}
\caption{Convergence in three dimensions. The dotted lines indicate orders ${p\!+\!1}$ for the Laplace operator (left) and $p$ for the elastostatic operator (right). The observed convergence orders agree with estimates~\eqref{eq:SL_ERROR_3D} and \eqref{eq:SL_ERROR_VEC_3D}, with $h\propto N^{-1/2}$.}
  \label{fig:convergence-greens-3d}
   \end{subfigure}
\caption{Numerical errors ($E_1$ in~\eqref{eq:errors_vec}) in the evaluation of Green's formula~\eqref{eq:greens-formula-1} for the Laplace (left) and elastostatic (right) single- and double-layer operators in two dimensions (a) and three dimensions (b). The results are shown for three
    different values of $p$ (with $p$  denoting the number of Gauss-Legendre quadrature/interpolation nodes per patch per dimension) and various total numbers $N$ of quadrature nodes. The reference slopes, shown as dotted lines, match the convergence orders established in Section~\ref{sec:error_analysis}.}\label{fig:convergence_SLDL}
\end{figure}

\subsection{Operator evaluation}\label{sec:conv-greens-ident}

In this first set of examples we consider the errors incurred in the evaluation of the on-surface Green's identities~\eqref{eq:greens-formula-1} and \eqref{eq:greens-formula-2}, when all four integral operators $S$, $K$, $K'$, and $T$, are approximated by the procedures presented in Section~\ref{sec:numer-discr}. These examples serve as an initial validation of the proposed methodology, for the PDEs~\eqref{eq:considered-PDEs} in both two and three spatial dimensions, as they allow us to easily measure the on-surface errors in the evaluation of the integral operators on given densities.

Throughout the present section we take~$\Gamma \subset \R^d$ as the kite-shaped curve~\cite[p.~79]{COLTON:2012} for $d=2$ (see the inset in Figure~\ref{fig:convergence-greens-2d}) and the bean-shaped surface~\cite[p.~104]{Bruno:2001ima} for $d=3$ (see the inset in Figure~\ref{fig:convergence-greens-3d}). For each PDE considered we construct an exact solution of the homogeneous PDE in the interior of $\Gamma$, given by $\uref(\ner) = G(\ner,\bx_s)$, where $\bx_s=(1,1)$ and $\bx_s = (1,1,1)$ for $d=2$ and $d=3$, respectively, and where $G$ is the {free-space Green's function} provided in Appendix~\ref{app:free_space_Green}. Since $\bx_s$ lies outside of $\Omega$, the function $\uref$ is a solution inside of $\Omega$ of the associated PDE. This solution is used as reference to assess the errors in the numerically approximated Green's formulae.\enlargethispage*{1ex}

The curve $\Gamma$ in the $d=2$ case, is partitioned into $M$ non-overlapping patches where $p$-point Gauss-Legendre quadrature rules are employed to integrate over each individual patch, so that the same number of quadrature points $P_m=p$ is used for all $m=1,\ldots, M$ (see Section~\ref{sec:quadrature}). In the $d=3$ case, we represent the surface as the union~$M$ non-overlapping logically-quadrilateral patches, as done in~\cite{perez2019planewave}, . We use a tensor-product quadrature rule comprising $p \times p$ Gauss-Legendre nodes to integrate over the patches so that the same number of quadrature points $P_m=p^2$ is used for all $m=1,\ldots,M$. Patches of approximately the same size $h>0$, with $h\propto 1/N$ in the $d=2$ case, and $h\propto 1/\sqrt{N}$ in the $d=3$ case, are used in these examples. This yields a total number of $N = Mp^{d-1}$ quadrature nodes on $\Gamma$, resulting in $N$ (resp. $Nd$) degrees of freedom for scalar (resp. vector) problems since a Nystr\"om discretization is used.

Letting $\mathrm{S}=\mathrm{V}_{0,-1}$, $\mathrm{K}=\mathrm{V}_{1,0}$, $\mathrm{K}'=\mathrm{W}_{0,-1}$, and $\mathrm{T}=\mathrm{W}_{0,1}$, be the matrix approximations of the corresponding operators in \eqref{eq:BIOS}, which are evaluated on given densities as per the procedure described in Section~\ref{sec:forward_map}, we consider two types of errors:\enlargethispage*{5ex}
\begin{align}\label{eq:errors_vec}
  E_1 := \frac{\|\tilde{\bold{u}} - \bold{u}\|_{\infty}}{\|\bold{u}\|_{\infty}}
  \andtext E_2 := \frac{\|\tilde{\bold{v}} - \bold{v}\|_{\infty}}{\|\bold{v}\|_{\infty}},
\end{align}
where the vectors
\begin{equation}
  \half\tilde{\bold{u}} := \mathrm{S}\bold{u} - \mathrm{K}\bold{v} \andtext
  \half\tilde{\bold{v}} := \mathrm{K}'\bold{u} - \mathrm{T}\bold{v}
\end{equation}
are given in terms of
\begin{equation}
  \bold{u} := \big[\gamma_0\uref(\by_1), \ldots, \gamma_0\uref(\by_N)\big]^T \andtext
  \bold{v} := \big[\gamma_1\uref(\by_1), \ldots, \gamma_1\uref(\by_N)\big]^T,
\end{equation}
with  $\ney_j$, $j=1,\ldots,N,$ denoting the quadratures points on $\Gamma$.

\begin{figure}[t]
  \begin{subfigure}{1\linewidth}
  \centering
  \includegraphics[width=0.49\textwidth]{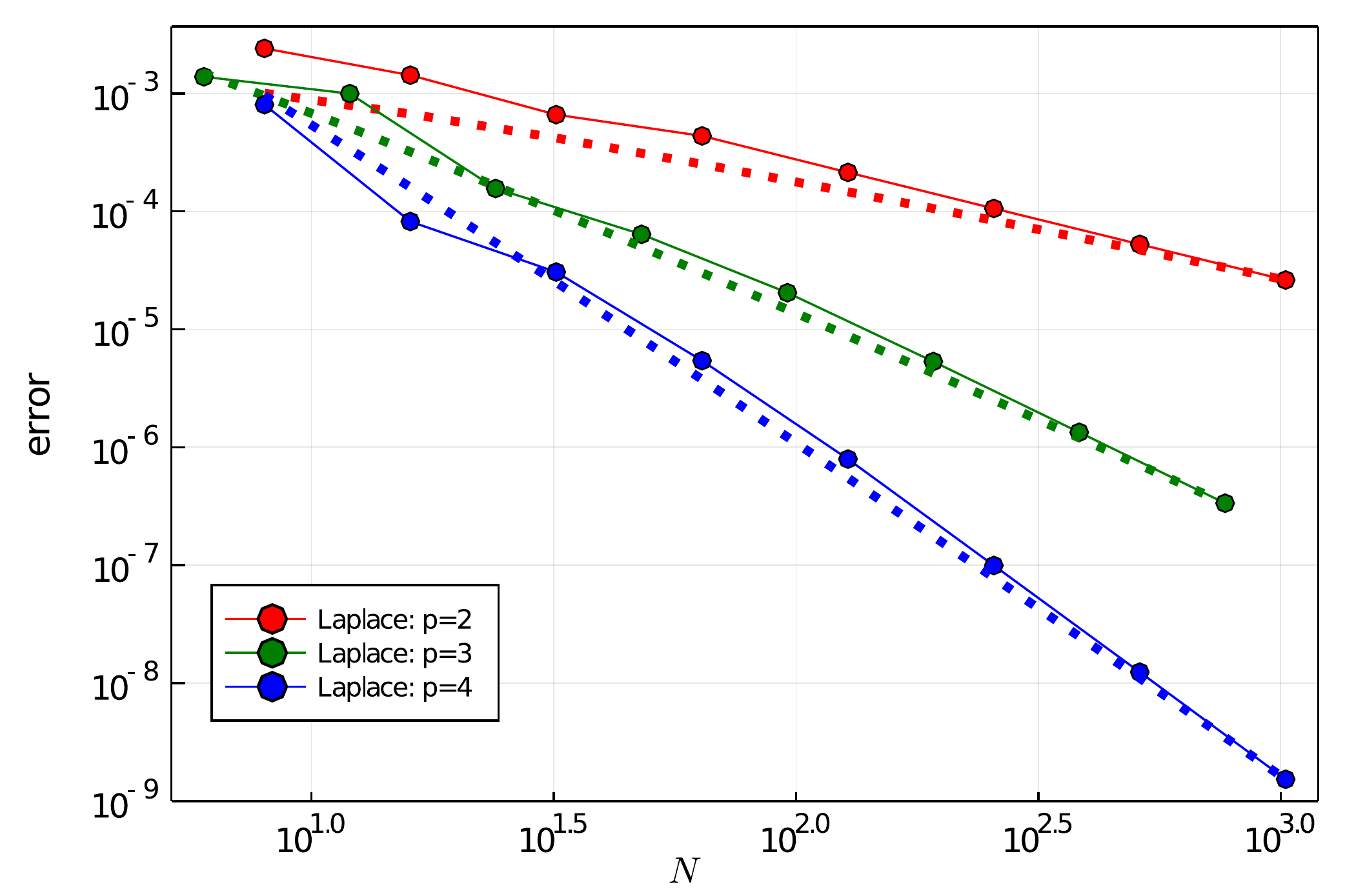}
   \includegraphics[width=0.49\textwidth]{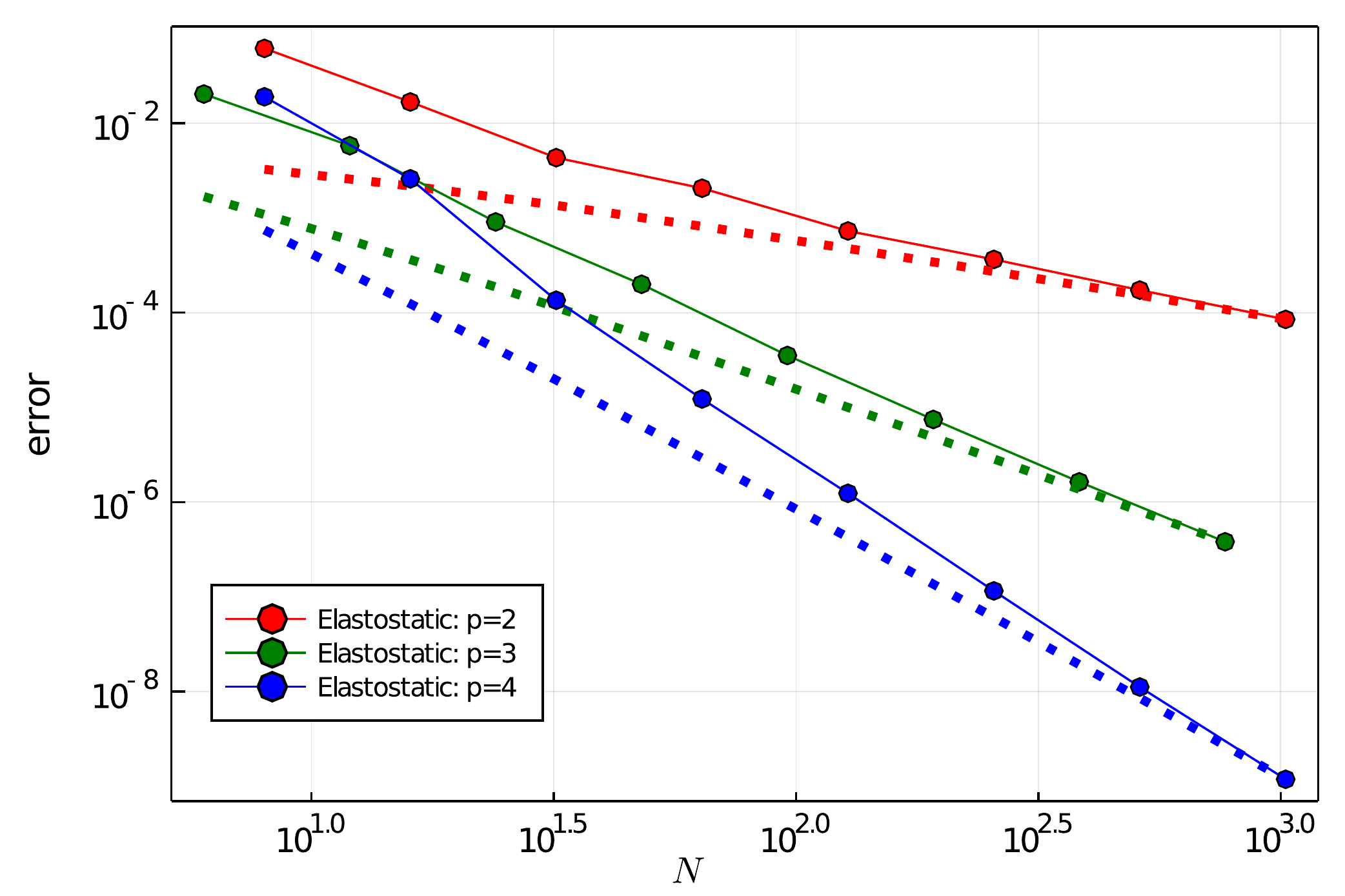}\\
  \caption{Convergence in two dimensions. The dotted lines indicate the order ${p\!-\!1}$ for Laplace (left) and elastostatic (right) operators. The observed convergence orders agree with estimates~\eqref{eq:HS_ERROR} and \eqref{eq:HS_ERROR_VEC} in Theorem~\ref{tm:d2case}, with $h\propto N^{-1}$.}
  \label{fig:convergence-greensp-2d}
  \end{subfigure}
    \begin{subfigure}{1\linewidth}
  \centering
  \includegraphics[width=0.49\textwidth]{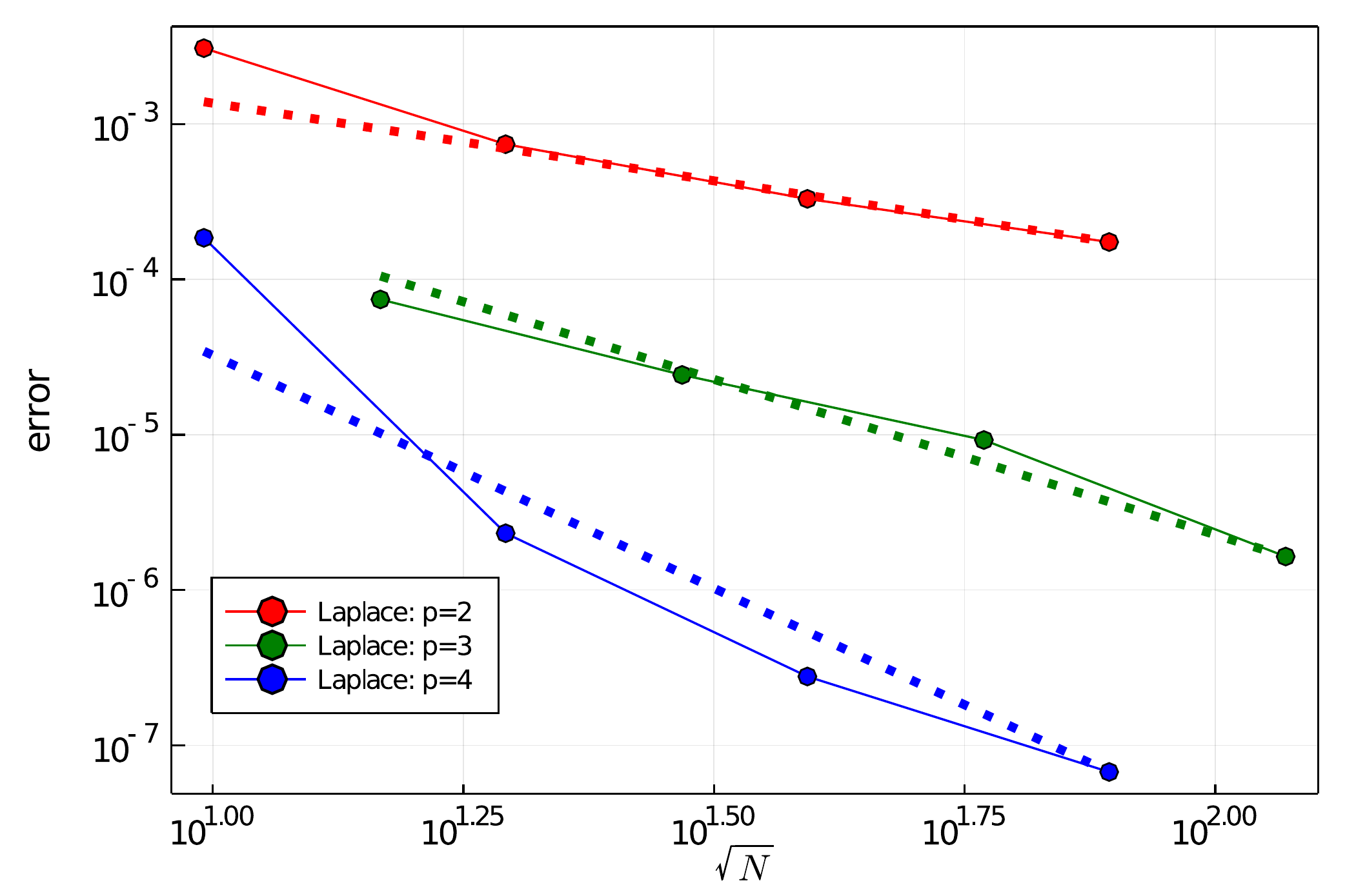}
  \includegraphics[width=0.49\textwidth]{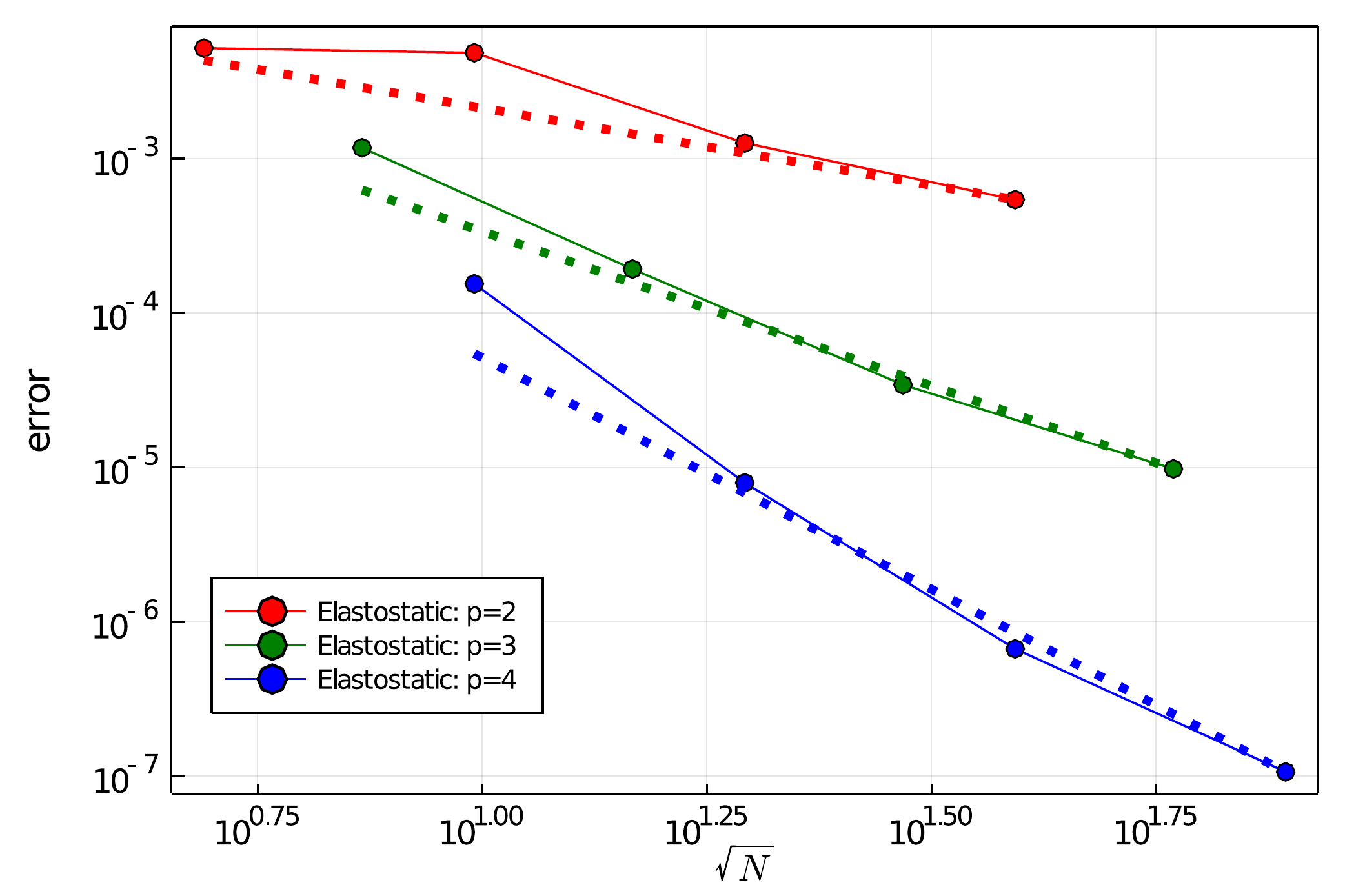}
  \caption{Convergence in three dimensions. The dotted lines indicate the order ${p\!-\!1}$ for Laplace (left) and elastostatic (right) operators. The observed convergence orders agree with estimates~\eqref{eq:HS_ERROR_3D} and \eqref{eq:HS_ERROR_VEC_3D}, with $h\propto N^{-1/2}$.}
  \label{fig:convergence-greensp-3d}
  \end{subfigure}
\caption{Numerical errors ($E_2$ in~\eqref{eq:errors_vec}) in the evaluation of Green's formula~\eqref{eq:greens-formula-2} for the Laplace (left) and elastostatic (right) adjoint double-layer and hypersingular operators in two (a) and three (b) dimensions. The results are shown for three
    different values of $p$ (with $p$  denoting the number of Gauss-Legendre quadrature/interpolation nodes per patch per dimension) and various total numbers $N$ of quadrature nodes. The reference slopes, shown as dotted lines, match the convergence orders established in Section~\ref{sec:error_analysis}.}\label{fig:convergence_ADLHS}
\end{figure}

The $E_1$ errors, corresponding to the combined evaluation of the single- and double-layer operators, are shown in Figure~\ref{fig:convergence-greens-2d} (resp.~\ref{fig:convergence-greens-3d}) for $d=2$ (resp. $d=3$) for various discretization sizes $h\propto N^{1/(d-1)}$ and numbers $p$ of quadrature points per dimension per patch. As discussed in Section~\ref{sec:error_analysis}, for a $p$-point (resp. $(p\times p)$-point) quadrature rule for $d=2$ (resp. $d=3$), we observe,  up to logarithmic factors, $E_1=\Ocal(h^{p+1})$ for the scalar problems and  $E_1=\Ocal(h^{p})$ errors for the vector problems. The difference in the convergence order between the scalar and vector PDEs is due to fact that the double-layer operator is of Cauchy principal value type in the latter case but not in the former, see Table~\ref{tab:singularity-types}. Then, figures~\ref{fig:convergence-greensp-2d} and~\ref{fig:convergence-greensp-3d} display the $E_2$ errors for $d=2$ and $d=3$, respectively, corresponding to the combined evaluation of the adjoint double-layer and hypersingular operators. As discussed in Section~\ref{sec:error_analysis}, the dominant errors in this case, stem from the approximation of the hypersingular operator, yielding $E_2=\Ocal(h^{p-1})$ in both two and three dimensions.

\subsection{Solution of boundary value problems}\label{sec:bvp}

In our next set of examples we apply the proposed methodology to the solution of exterior boundary value problems in the unbounded domain $\R^3\setminus\overline{\Omega}$ of boundary $\Gamma$, focusing on the (scalar) Helmholtz equation for a fixed wavenumber   $k = \omega / c =\pi$. The field $u^{\rm ref}(\ner) := G(\ner,\bold 0)$, generated by a unit point source at the origin, solves (for example) the exterior Dirichlet problem
\begin{equation}
  \Lcal u = 0 \text{ \ in }\R^3\setminus\overline{\Omega}, \quad \gamma_0 u=\gamma_0 u^{\rm ref}\text{ \ on }\Gamma.
\end{equation}
We numerically solve that problem by seeking $u$ as the combined single- and double-layer potential
\begin{equation}
  \label{eq:combined-field-representation}
  u(\br) = (\Dcal-i\omega\Scal)[\varphi](\br), \quad \ner\in\R^d\setminus\Gamma,
\end{equation}
which leads to the combined field integral equation (CFIE):
\begin{equation}
  \label{eq:CFIE} \frac{\varphi(\bx)}{2} + (K-i\omega S)[\varphi](\bx)=f(\bx) ,\quad \bx\in\Gamma,
\end{equation}
for the unknown density function $\varphi:\Gamma\to\C$, where $f:=\gamma_0u^{\rm ref}$. As is well-known, the CFIE admits a unique solution for all frequencies~\cite{COLTON:1983}.

The integral equation~\eqref{eq:CFIE} is discretized using the procedures presented in Section~\ref{sec:numer-discr} and iteratively solved by means of GMRES~\cite{saad1986gmres}.  The resulting numerical errors are measured by
\begin{align}\label{eq:error_ff}
  E^{\rm far} := \frac{\displaystyle \max_{j=1,\ldots,100}\big| \tilde u(\ner_j) - \uref(\ner_j)\big|}{\displaystyle\max_{j=1,\ldots,100}\big|\uref(\ner_j)\big|},
\end{align}
where the numerical solution $u$ is evaluated using~\eqref{eq:combined-field-representation} with $\varphi$ solving~\eqref{eq:CFIE} and the test points $\{\ner_j\}_{j=1}^{100}$ lie on a circle (resp. a sphere) of radius~5 enclosing~$\Gamma$ for $d=2$ (resp. $d=3$). Figure~\ref{fig:2d_kite_ff}, shows the two-dimensional convergence results, where $\Gamma$ is the kite-shaped curve previously used in Section~\ref{sec:conv-greens-ident}. The linear system solutions in these examples were obtained after about $22$ GMRES iterations for a relative error tolerance of $10^{-12}$.  No significant differences in the iteration count were observed between the various $p$ values used. Likewise, Figure~\ref{fig:3d_acorn_ff} displays the solution errors in the three-dimensional case, with $\Gamma$ taken as an acorn-shaped surface. Again, all simulations converged to a residual tolerance of $10^{-12}$ within about $30$ iterations, with no significant differences observed between different values of $p$.\enlargethispage*{1ex}
\begin{figure}[h!]
  \begin{subfigure}{0.5\linewidth}
  \centering
  \includegraphics[width=1\textwidth]{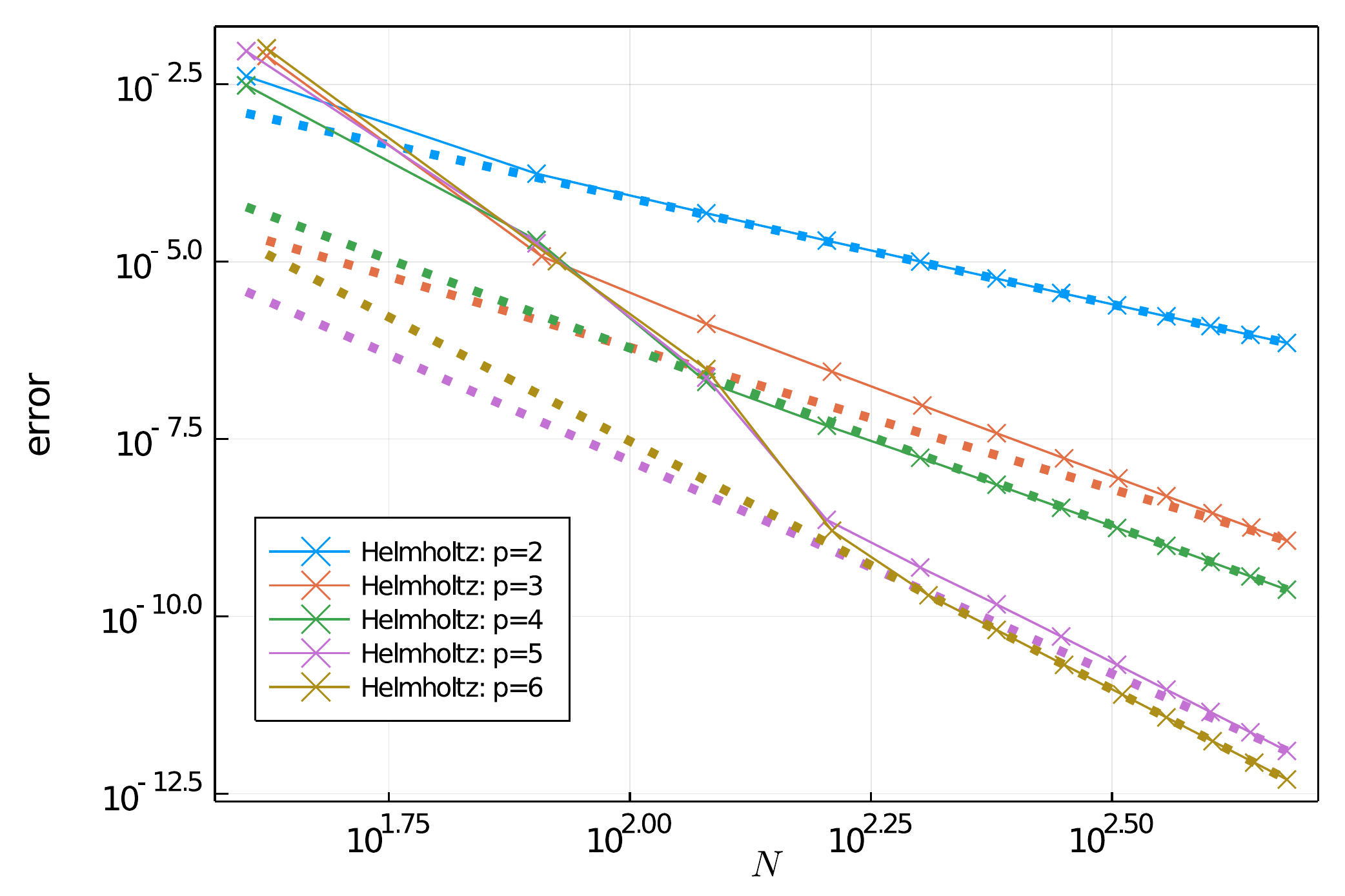}
  \caption{Field errors $E^{\rm far}$ (see~\eqref{eq:error_ff}) in the solution of the Helmholtz equation in two dimensions.\label{fig:2d_kite_ff}}
    \end{subfigure}\qquad
      \begin{subfigure}{0.45\linewidth}  \centering
        \includegraphics[width=0.95\textwidth]{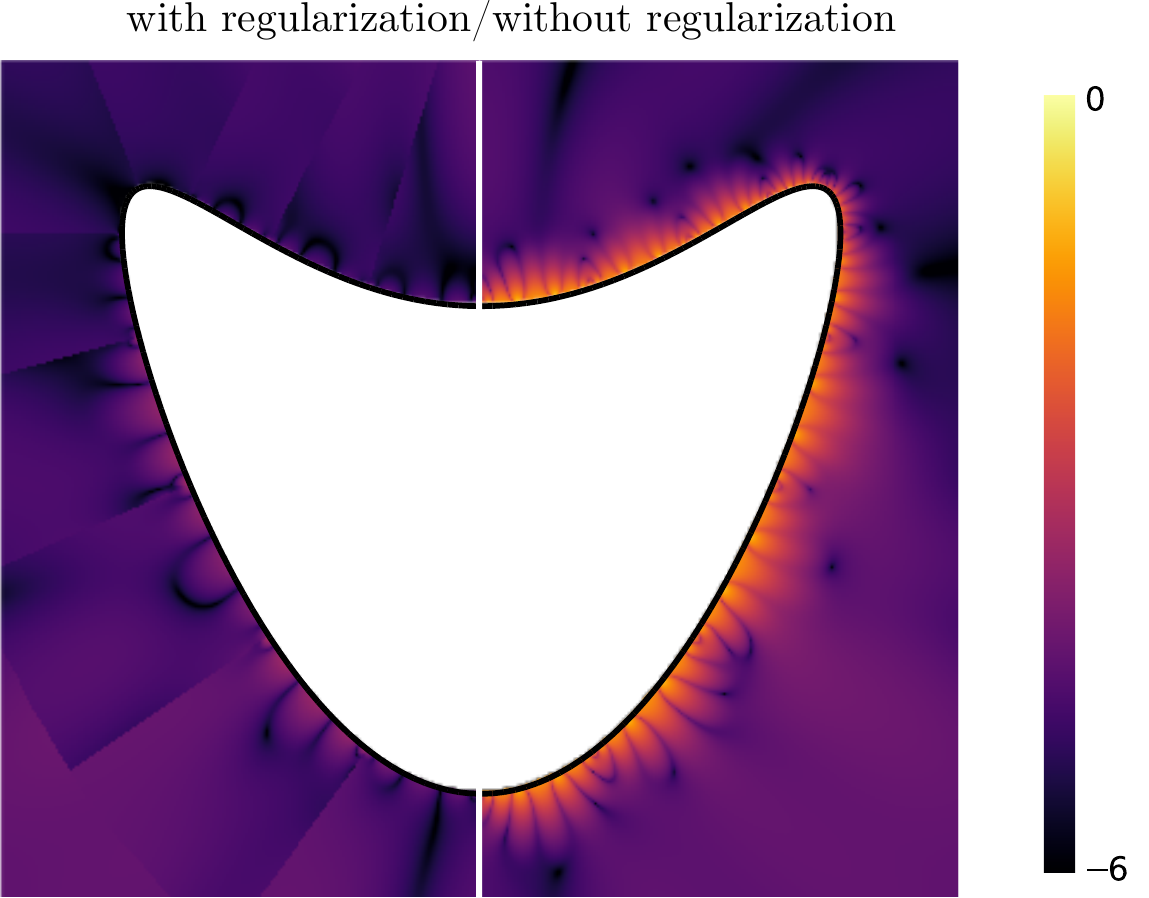}
  \caption{Near-field errors in two dimensions.\label{fig:kite_nf}}
  \end{subfigure}\\
    \begin{subfigure}{0.5\linewidth}
  \centering
\includegraphics[width=1\textwidth]{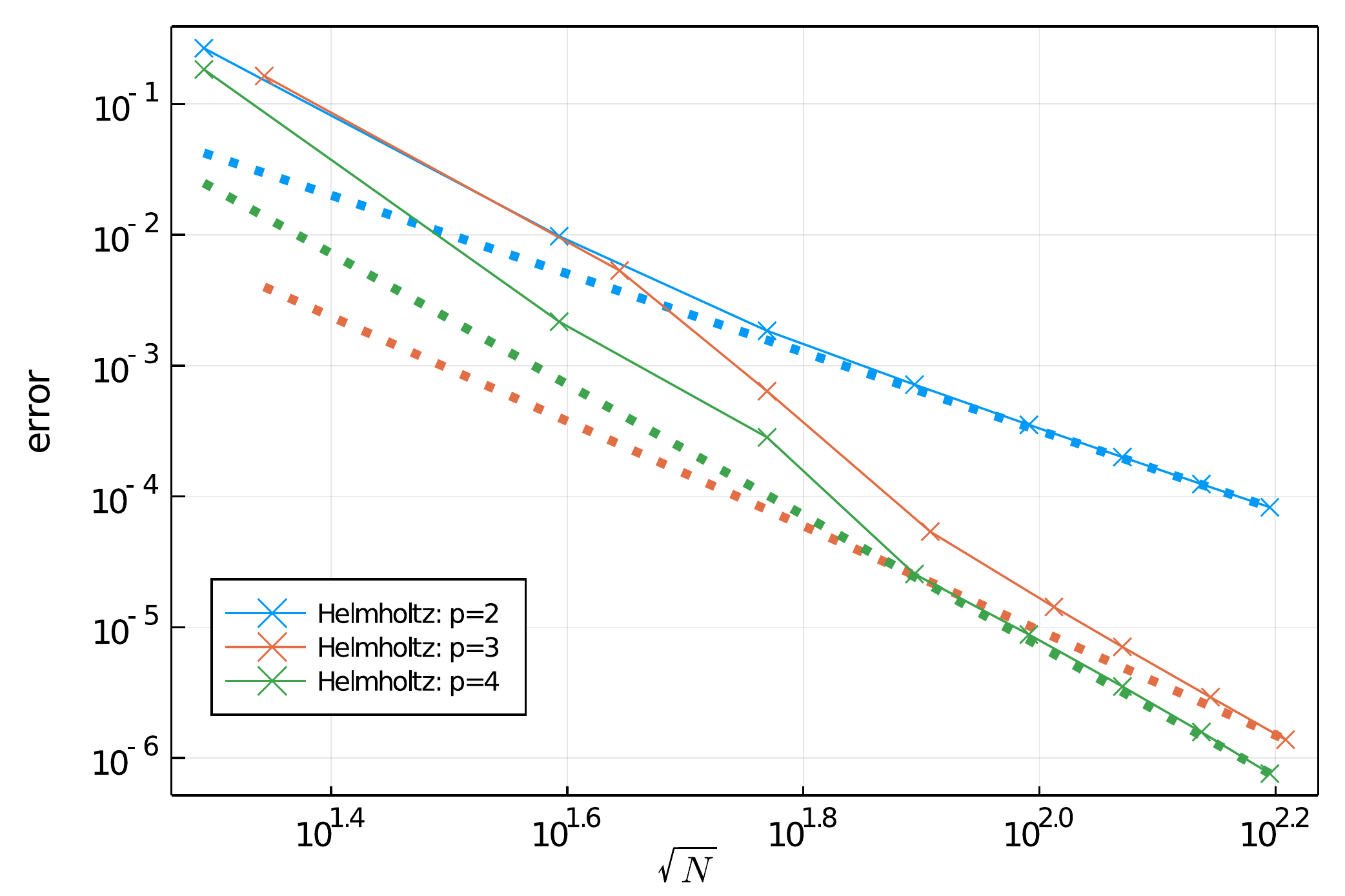}
  \caption{Field errors $E^{\rm far}$ (see~\eqref{eq:error_ff}) in the solution of the Helmholtz equation in three dimensions.\label{fig:3d_acorn_ff}}
  \end{subfigure}
    \begin{subfigure}{0.5\linewidth}
  \centering
        \includegraphics[width=1\textwidth]{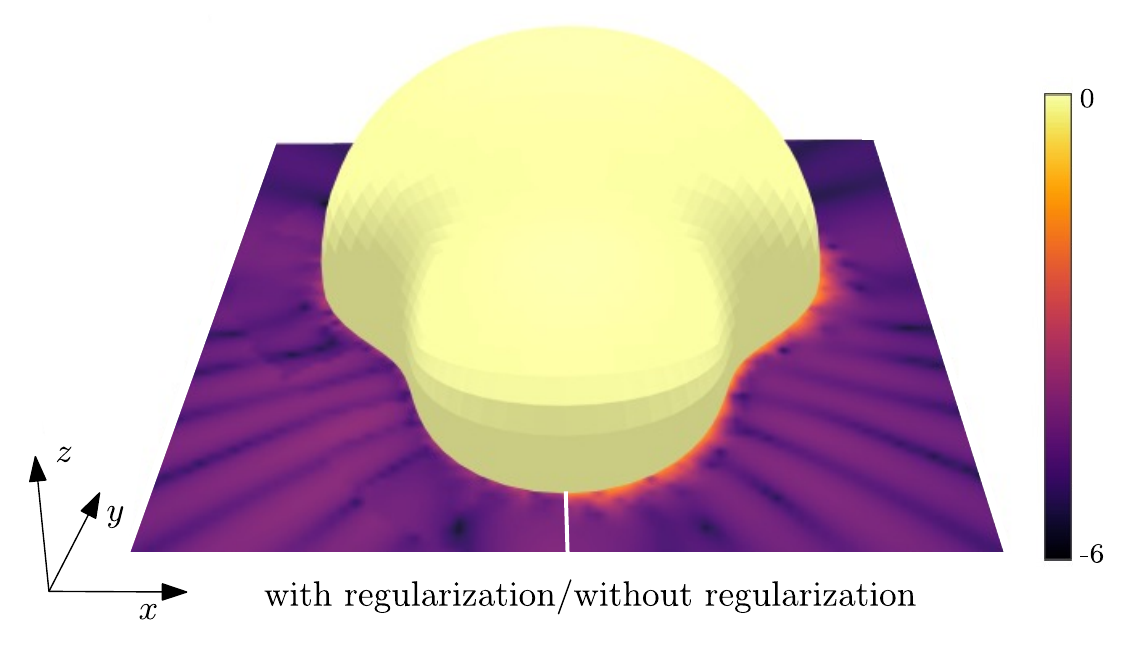}
  \caption{Near-field errors in three-dimensions.\label{fig:acorn_nf}}
  \end{subfigure}
  \caption{Solution of two- and three-dimensional exterior Dirichlet problems using the combined-field integral equation~\eqref{eq:CFIE}. (a) and (c): Convergence plots as the mesh size is refined for various values of $p$, where $p$ denotes the number of quadrature nodes per patch per dimension. The dotted lines represent reference slopes of order $p+1$. The right figure shows the field error $\log_{10}|\uref - \tilde{u}|$ with and without the near-field regularization.}
  \label{fig:exterior-dirichlet-pointsource-2d}
\end{figure}

As discussed in Section~\ref{sec:dens-interp}, it is also possible to regularize the nearly singular integrals that occur when evaluating the potential \eqref{eq:combined-field-representation} at observation points $\br\in\R^d\setminus\Gamma$ located near, but not on, $\Gamma$. The usefulness of this regularization is illustrated in Figures~\ref{fig:kite_nf} and~\ref{fig:acorn_nf} for $d=2$ and $d=3$, respectively, where the absolute solution error $\log_{10}|\uref - \tilde{u}|$ is plotted with or without regularization of the near-singularity (the regularization being in the former case used for all field points $\ner$ such that $\text{dist}(\ner,\Gamma) < 1$). As can be seen in Figures~\ref{fig:kite_nf} and~\ref{fig:acorn_nf}, significantly better results are obtained when regularization is applied.

\begin{remark}\label{rm:superconvergence-odd-p}
  The convergence of the field errors shown in Figures~\ref{fig:2d_kite_ff} and~\ref{fig:3d_acorn_ff} appears to be one order higher than predicted in Section~\ref{sec:error_analysis} for odd values of $p$. The same phenomenon was observed in \cite{perez2019planewave} in the context of a plane-wave DIM for Helmholtz equation, and in \cite{HDI3D} in the context of the harmonic DIM for the Laplace equation.
\end{remark}

\begin{figure}[t]
  \centering
  \begin{subfigure}{0.49\linewidth}
    \includegraphics[width=1\textwidth]{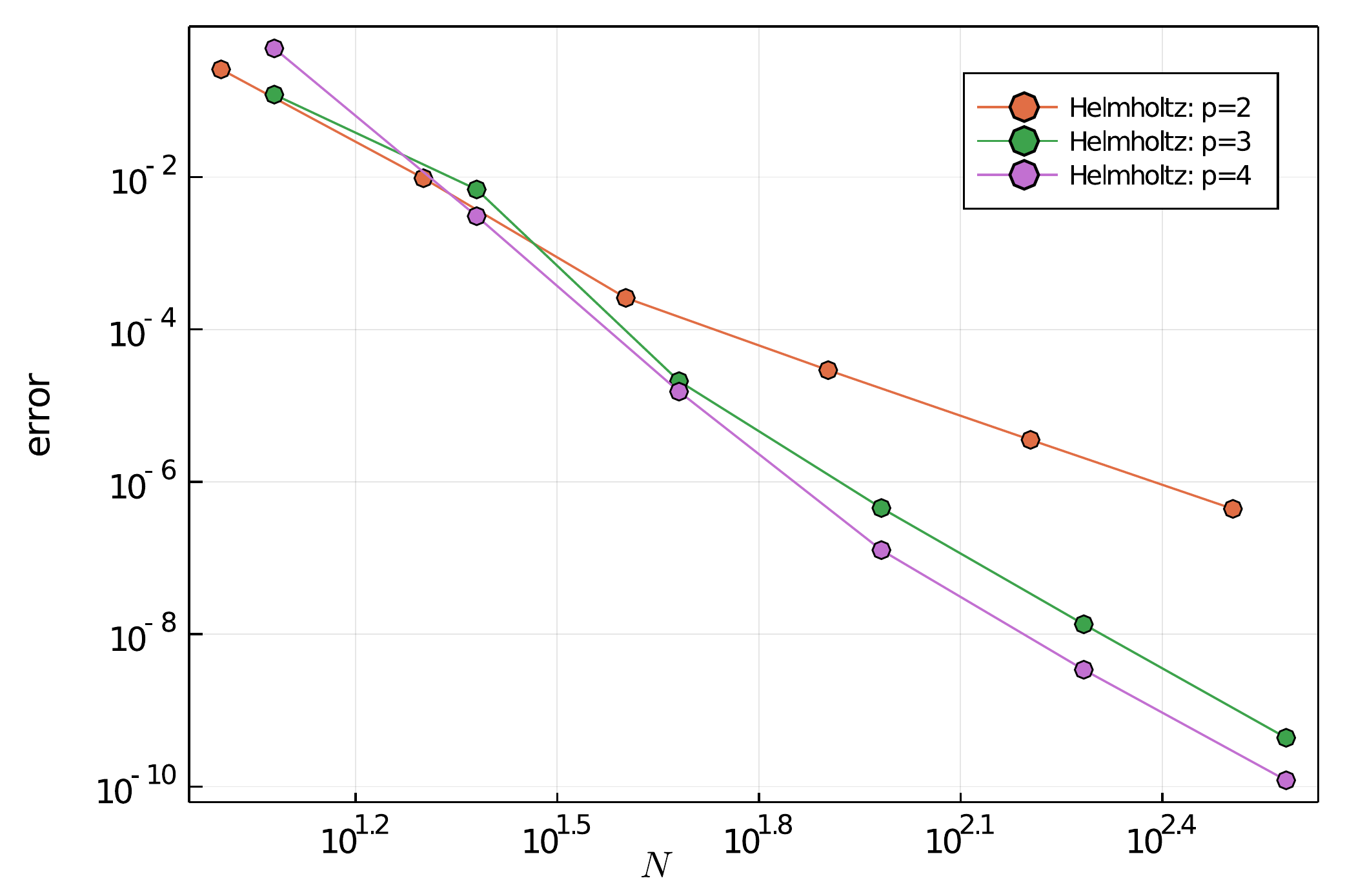}
    \caption{$E^{\rm far}$ $(d=2)$.}
  \end{subfigure}
  \begin{subfigure}{0.49\linewidth}
    \includegraphics[width=1\textwidth]{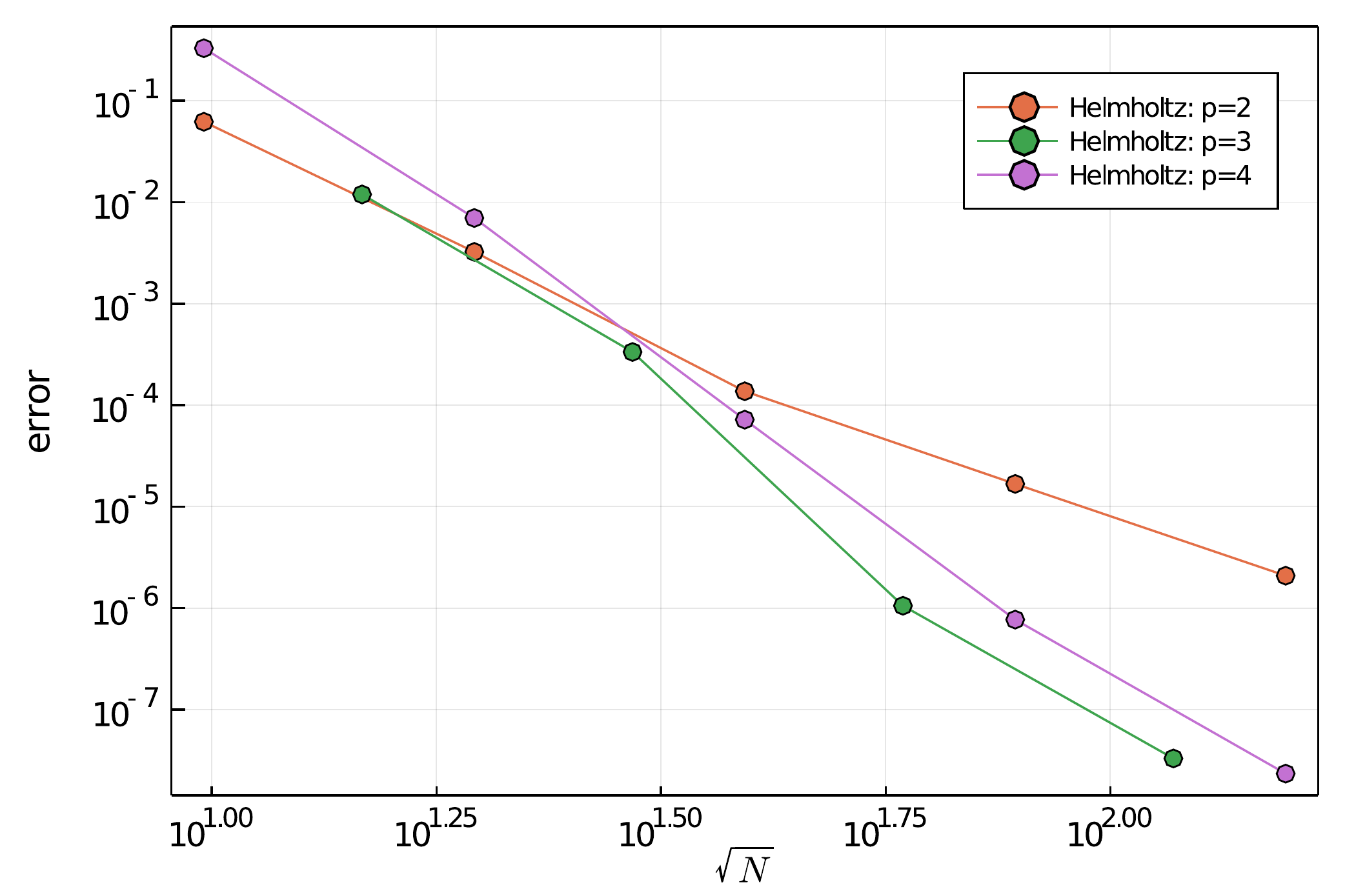}
    \caption{$E^{\rm far}$ $(d=3)$.}
  \end{subfigure}
  \caption{Sound-soft scattering of a plane wave by a unit circle (left) and a unit sphere (right); Helmholtz equation solution with $\omega=2\pi$ obtained by means of the CFIE~\eqref{eq:CFIE}. Relative solution errors $E^{\rm far}$ (see~\eqref{eq:error_ff}) evaluated at a set of evaluation points located on a circle/sphere of radius twice that of the obstacle. The value $p=3$ yields an improved convergence order equal to that for $p=4$ (see Remark~\ref{rm:superconvergence-odd-p}).}
  \label{fig:convergence-scattering-planewave}
\end{figure}

\begin{figure}[t]
  \centering
  \begin{subfigure}{0.49\linewidth}
    \includegraphics[width=1\textwidth]{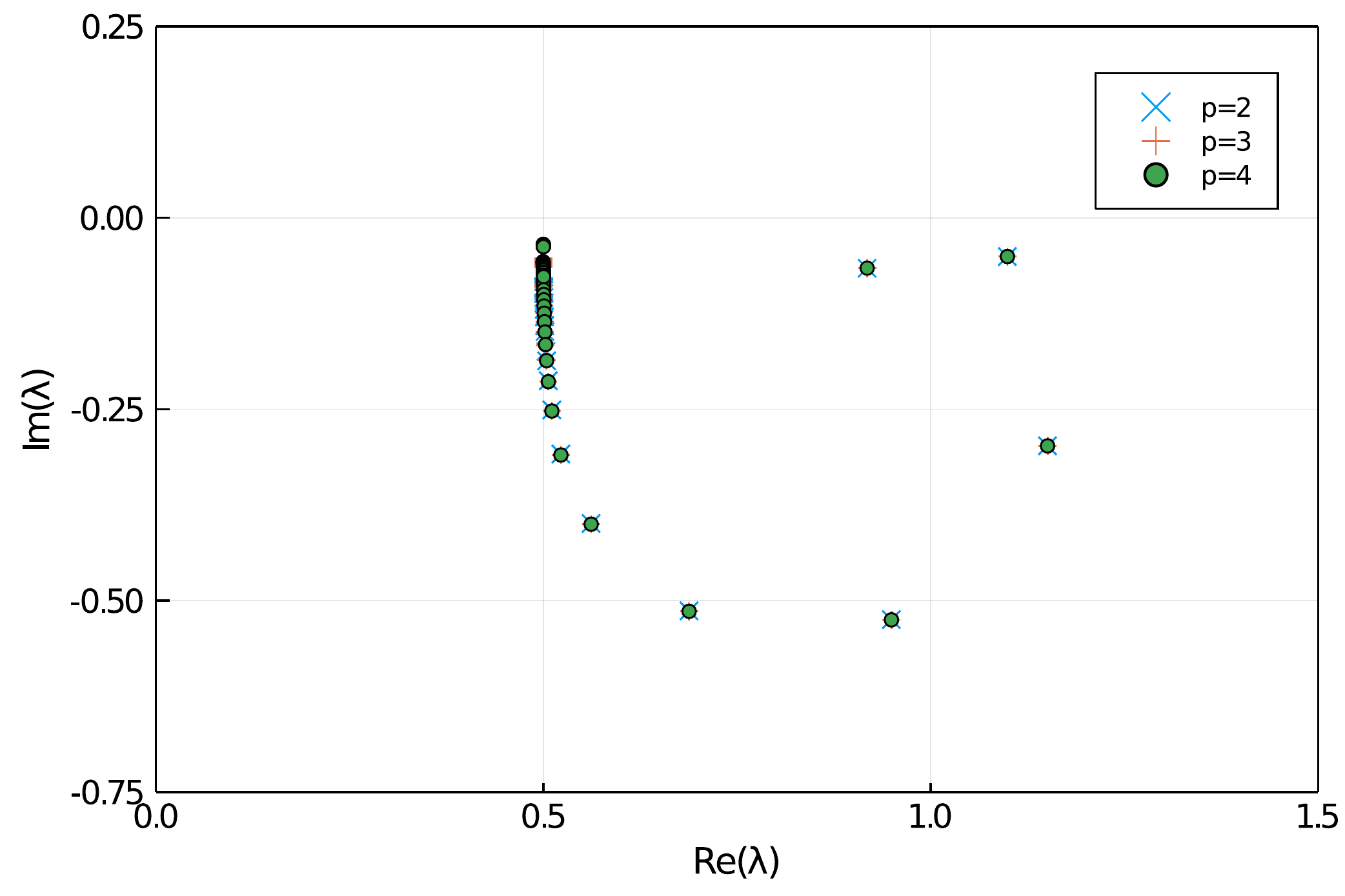}
    \caption{$d=2$.}
    \label{fig:spec-2d}
  \end{subfigure}
  \begin{subfigure}{0.49\linewidth}
    \includegraphics[width=1\textwidth]{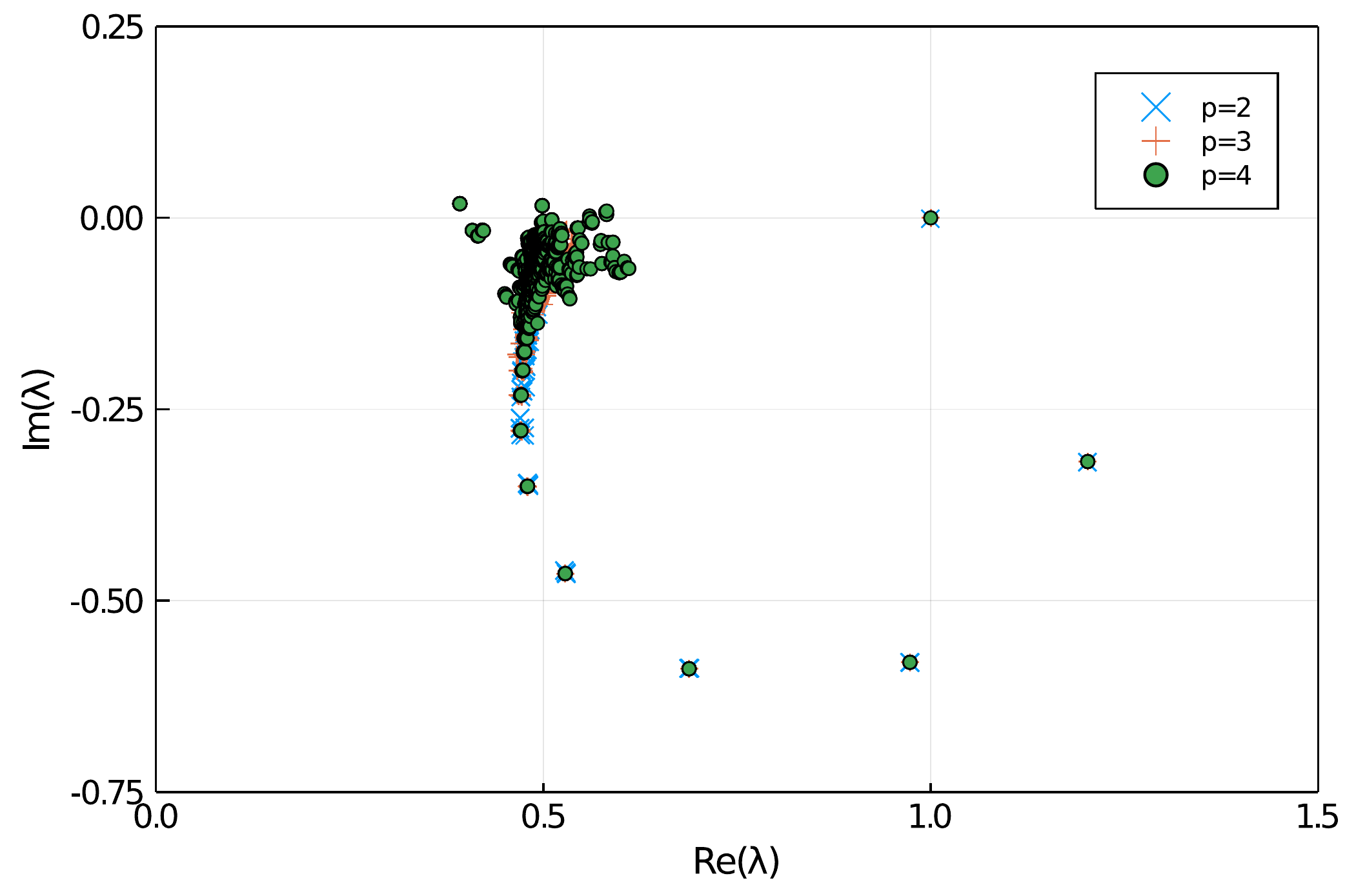}
    \caption{$d=3$.}
      \label{fig:spec-3d}
  \end{subfigure}
  \caption{Sound-soft scattering of a plane wave by a unit circle (left) and a unit sphere (right): spectrum of the CFIE operator.}
  \label{fig:convergence-scattering-planewave}
\end{figure}

In the next example we solve the classical problem of scattering of an incident plane wave by a sound-soft circular (resp. spherical) obstacle, for which an exact solution is known~\cite{martin2006multiple}. More precisely, letting the scattered field $u = u^\text{tot} - u^{\text{inc}}$ be given as the combined potential~\eqref{eq:combined-field-representation}, which satisfies the Sommerfeld radiation condition, we arrive at the CFIE~\eqref{eq:CFIE} with boundary data $f = -\gamma_0\uinc$ for $u^\inc(\ner) = \exp\big(\mathrm{i}\omega  \ner\cdot (1,0,0)\big)$. This setting is closer to realistic problems as it takes into account the effect of geometrical errors associated with the surface representation, which for complex objects is given by an approximation of the true geometry using e.g. triangular patches. Figure~\ref{fig:convergence-scattering-planewave} shows the relative solution error~\eqref{eq:error_ff}, the reference solution $\uref$ being this time (a truncated series approximation of) the exact solution. The iteration counts incurred by GMRES (with a relative tolerance set to $10^{{-12}}$) were found to be approximately constant, around $19 \pm 2$ for $d=2$ and $35 \pm 5$ for $d=3$, for all of the mesh sizes and values of $p$ considered. To further demonstrate that the resulting linear system preserves the well-conditioned behavior of the CFIE formulation, which yields eigenvalues $\lambda$ accumulating at~$1/2$ due to the compactness of the weakly-singular operators $S$ and $K$, we display in Figures~\ref{fig:spec-2d} and~\ref{fig:spec-3d} the spectrum of the resulting system matrix for $d=2$ and $d=3$, respectively. The clustering of the eigenvalues around $\lambda = 1/2$ is clearly visible in both cases.\enlargethispage*{1ex}

\subsection{Meshed surfaces}
\label{sec:hybrid-meshes}

The examples considered so far in this work involve simple (smooth) surfaces given as unions of non-overlapping logically-quadrilateral patches admitting analytical parametrizations. Complex three-dimensional objects of engineering interest, however, are often not available in this form. In fact, they are typically given as CAD models from which surface meshes can be produced using many mature codes. In this section, we consider the effect of using meshed surfaces produced by off-the-shelf software, demonstrating that our methodology works well with high-order (geometrical) elements generated through the freely-available mesh generation code Gmsh~\cite{geuzaine2009gmsh}.

\begin{figure}[b] \centering
  \includegraphics[width=0.85\textwidth]{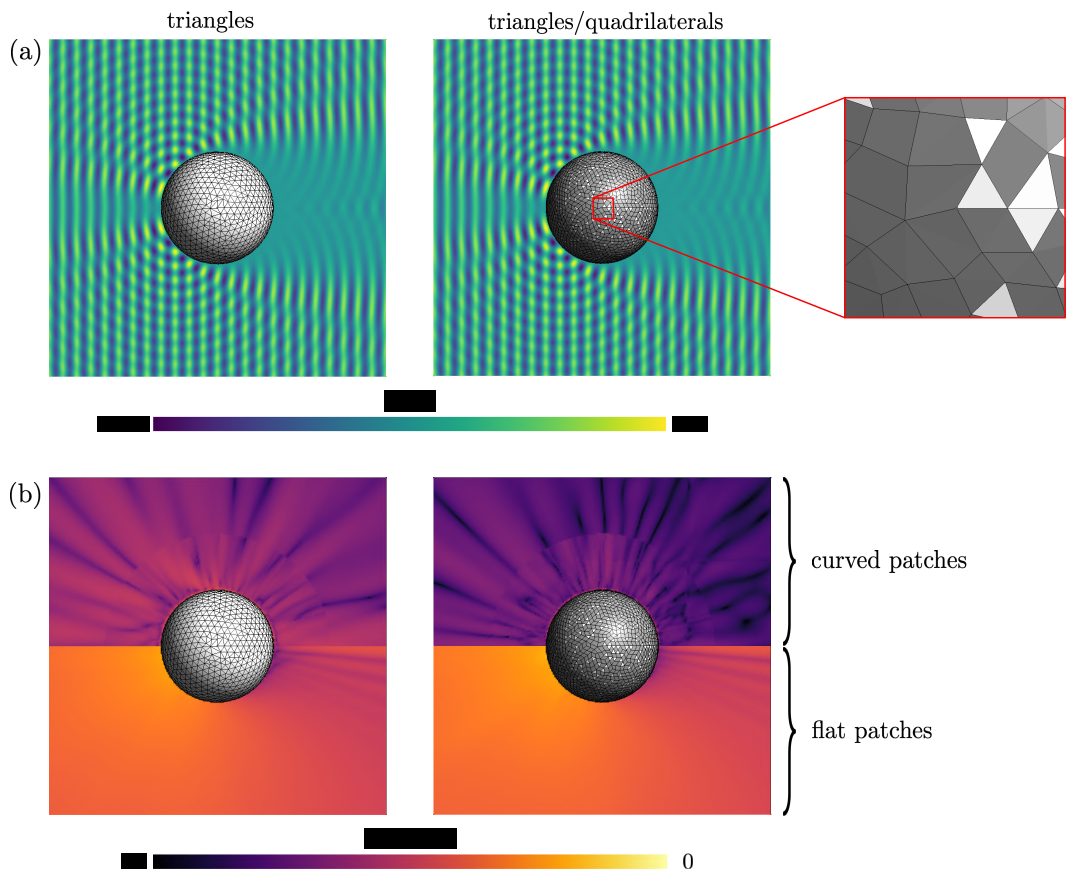}
  \caption{Solution of a scattering problem using different mesh types. The surface mesh color indicates the patch type (light gray for triangles, dark gray for quadrilaterals). The mesh either is fully triangular (left column) or consists mostly of quadrilateral patches and a few triangles (right column). The plots show the real part of the total field (top row) or the near-field errors computed using regularization (bottom row).}
    \label{fig:hybrid-meshes}
\end{figure}

To this aim, the next example explores various meshing strategies for a scattering problem onvolving a unit sphere. In particular, we consider surface meshes composed of patches of the following types:
\begin{enumerate}[noitemsep]
    \item[(a)] flat triangles,
    \item[(b)] curved (cubic) triangles,
    \item[(c)] flat hybrid triangles and quadrilateral, and
    \item[(d)] curved (cubic) hybrid triangles and quadrilaterals.
\end{enumerate}
Since our regularization technique is largely independent of the underlying geometrical representation, it is capable of handling in a unified manner surface meshes such as (d), thus providing significant flexibility. For validation purposes, we consider once again the CFIE formulation of the sound-soft scattering problem of a plane wave by the unit sphere, described in Section~\ref{sec:bvp}.

Figure~\ref{fig:hybrid-meshes}a plots the real part of the total (i.e. incident plus scattered) field obtained using patches of type (b) (left) or (d) (right, with an inset showing a zoomed view of the hybrid mesh). The 6- and 9-node quadrature rules depicted in~Figure~\ref{fig:quad_points} are used for the triangular and quadrilateral patches, respectively. As the CFIE problem~\eqref{eq:CFIE} is solved using a Nystr\"om method, the quadrature nodes are also the interpolation/collocation nodes, see Section~\ref{sec:numer-discr}. For triangular patches, these nodes define over the reference triangle a unique interpolation polynomial in $\mathbb{P}_2^{\Delta} = \mbox{span}\left\{x^{r} y^{s} : 0\leq r+s\leq 2, (r,s)\in \mathbb{N}^2 \right\}$. Similarly, for quadrilateral patches, the 9 quadrature nodes define over the reference square a unique interpolation polynomial in $\mathbb{P}_2^{\square} = \mbox{span}\left\{x^{r} y^{s} : \max\left\{r,s \right\} \leq 2, (r,s)\in \mathbb{N}^2 \right\}$. These observations suggest that the interpolation error is, in both cases, of order $\Ocal(h^3)$ where $h$ denotes the characteristic patch diameter.

Figure~\ref{fig:hybrid-meshes}b, in turn, shows the absolute errors obtained in these examples. Errors using curved (resp. flat) patches are plotted in the upper-half part (resp. lower-half part) of the panels, for triangular (left) or hybrid (right) meshes. All errors shown are evaluated against the (truncated) exact series solution. These results clearly demonstrate the improved accuracy attained by using curved patches, the number $N$ of degrees of freedom being identical. This meshing method will be used for the remaining examples of this article.

\subsection{Elastodynamic exterior Neumann problem}

We now consider a time-harmonic elastodynamic scattering problem, with physical parameters $\lambda=2$, $\mu=1$, $\rho=1$ and~$\omega=\pi$. We consider a P-wave incident field $u^\inc(\ner) = \bol{d} \exp(i k_L \bol{d} \cdot \ner)$, where $\bol{d} = (1,0,0)$ and $k_L = \omega \sqrt{\rho/(\lambda+2\mu)}$. The obstacle is a traction-free cavity, so that we impose the Neumann condition $\gamma_1 (u+u^\inc) = 0$ on $\Gamma$. We resort to an integral equation formulation based on representing the scattered displacement field $u$ by the combined field potential~\eqref{eq:combined-field-representation}. Taking the $\gamma_1$ trace of~\eqref{eq:combined-field-representation}, the following integral equation for the vector density $\varphi:\Gamma\to \C^3$ is obtained:
\begin{equation}\label{eq:elastody3D}
(i\omega)\lf\{\frac{\varphi(\bx)}{2} -K'[\varphi](\bx)\rg\}  +  T[\varphi](\bx) = - \gamma_1u^\inc(\bx),\quad\nex\in\Gamma.
\end{equation}
As in Section~\ref{sec:hybrid-meshes}, the surface $\Gamma$ is represented using curved (cubic) triangular patches over which integration/collocation is performed using 6 interior quadrature nodes.

The accurate solution of~\eqref{eq:elastody3D} requires regularization of the challenging elastodynamic hypersingular operator $T$, whose definition involves finite-part integrals. Additional difficulties arise from the fact that the hypersingular operator is not compact, leading to unfavorable spectral properties of the linear system arising from~\eqref{eq:elastody3D}. In the course of producing results for this example, we did not observe severe ill-conditioning of the linear systems, and only modestly large (always less than 100) GMRES iterations were required to meet the target tolerance. For larger problem sizes and/or higher-order quadrature rules, either analytical preconditioning (based on Calder\'on identities, see, e.g.~\cite{bruno2020regularized,chaillat:hal-02512652}) or algebraic preconditioning strategies may be required.

Figure~\ref{fig:scattering-elasticity}a displays the relative field error computed on a sphere (of radius $5$) surrounding the scatterer. The reference solution in this example was taken as a numerical approximation computed using a highly refined mesh. Two different surfaces $\Gamma$ are considered: a unit sphere, and a torus with outer radius equal to 1 and inner radius equal to 1/2. In addition, the real part of two components of the total displacement field, as well as the displacement magnitude, are displayed for $\omega=10\pi$ in Figures~\ref{fig:scattering-elasticity}b (sphere) and~\ref{fig:scattering-elasticity}c (torus). The sphere's mesh consists of $1656$ (curved) triangular patches, carrying $1656 \times 6 \times 3 = 29808$ degrees of freedom, and the GMRES solver converged within a residual tolerance of $10^{-4}$ in $53$ iterations. The torus mesh is made of $1248$ (curved) triangular patches, carrying a total of $1248\times 6 \times 3=22464$ degrees of freedom; the GMRES solver converged in $61$ iterations.\enlargethispage*{1ex}

\begin{figure}[t] \centering
  \includegraphics[width=0.75\textwidth]{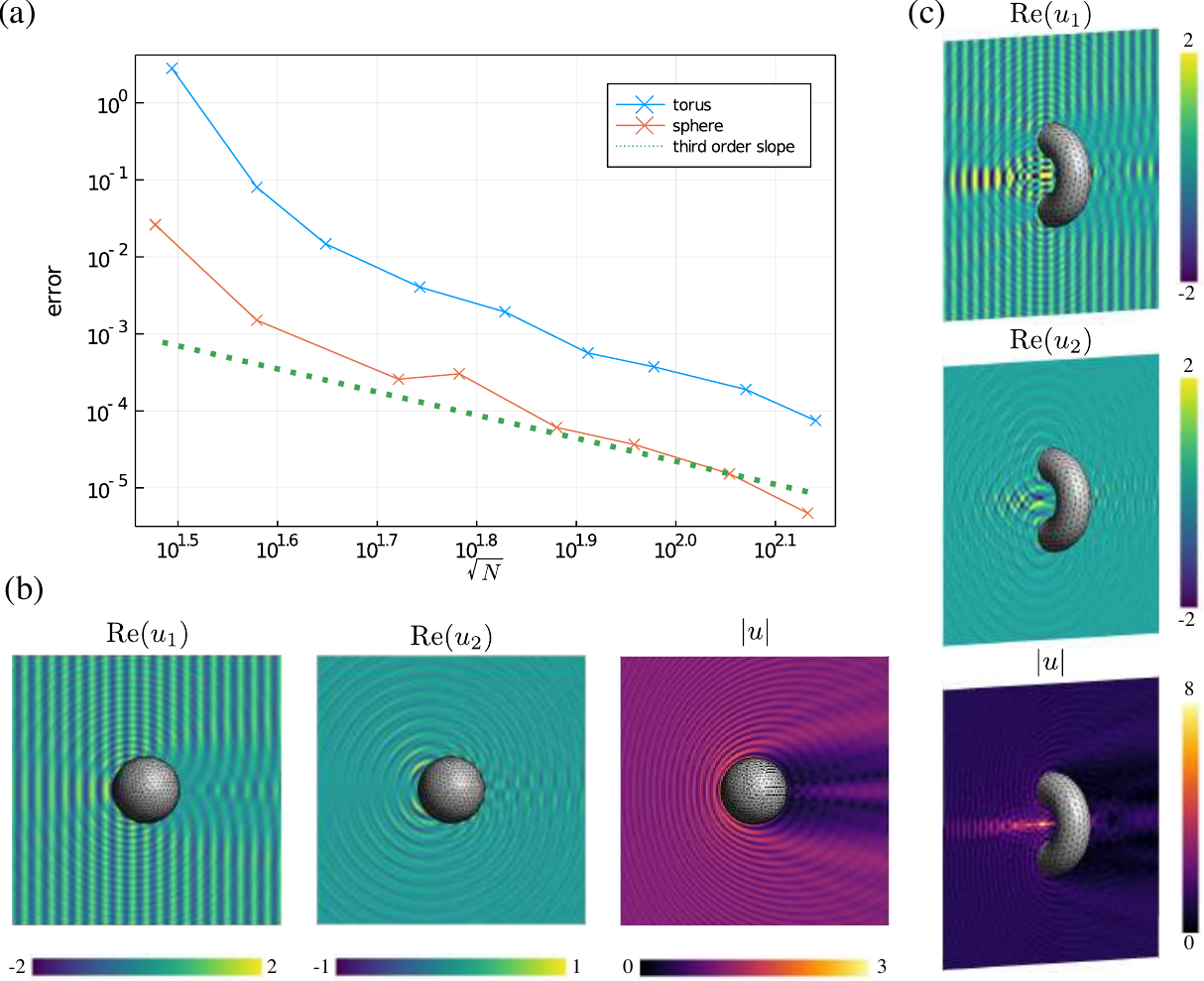}
  \caption{Time-harmonic elastodynamic scattering of a P-wave by a sphere or a torus. (a) Self convergence of the solution field evaluated on a sphere enclosing the scatterers. (b) and (c): Real part of the relevant components and magnitude of the total displacement field. In all cases the scatterer is represented using curved (cubic) triangular surface patches, and integration/interpolation within each patch is performed using a 6-point quadrature rule.}
  \label{fig:scattering-elasticity}
\end{figure}

\subsection{Large-scale exterior problem with complex geometry}

In this last set of examples we showcase the accuracy and efficiency of the overall proposed methodology used in combination with an in-house implementation of a hierarchical matrix compression algorithm~\cite{hackbusch2015hierarchical}, in conjunction with a diagonally-preconditioned GMRES~\cite{saad1986gmres}, on complex geometries of engineering interest. More precisely, we solve an exterior three-dimensional Helmholtz Neumann boundary value problem where the surface $\Gamma$ corresponds to an A319 airplane of approximate length and wingspan 34m and 36m, respectively\footnote{\url{https://gitlab.onelab.info/gmsh/gmsh/-/blob/gmsh_4_6_0/benchmarks/statreport/A319.brep}}. For the results that follow, the surface is approximated by means of (flat) triangular patches generated using Gmsh~\cite{geuzaine2009gmsh} (see Figure~\ref{fig:A319_mesh}). Throughout this section we employ a direct BIE formulation and thus solve for the unknown Dirichlet trace $\varphi =\gamma_0 u$ on the airplane surface $\Gamma$. The resulting direct second-kind BIE is given, with $g = \gamma_1 u$, by
\begin{equation}\label{eq:direct_BIE}
    -\frac{\varphi(\nex)}{2} + K[\varphi](\nex) = S[g](\nex),\quad\nex\in\Gamma.
\end{equation}

For validation purposes, we begin by constructing an exact exterior solution $u^{\rm ref}$ by placing a point source inside the airplane. The errors in the numerically produced solution $u$, which is obtained by solving~\eqref{eq:direct_BIE} for $\varphi = \gamma_0 u$ with $g=\gamma_1 u^{\rm ref}$, are assessed through~\eqref{eq:error_ff} by comparing it with $u^{\rm ref}$ at target points on a box sufficiently large to contain the airplane (see Figure~\ref{fig:A319_exact}).  The wavelength $\lambda=2\pi/\omega$ considered in this case is $\lambda=4$m, which yields about 10 wavelengths across the airplane.

In order to gauge the effect of the mesh size on the solution quality, we utilize meshes of approximate patch sizes $h=0.4$m, $0.2$m, $0.1$m, and~$0.05$m. These values give rise to triangulations made of $17,258$, $48,530$, $17,1694$ and $651,344$ patches, respectively. Additionally, the effect of the order of the method, which is directly associated with the number $P$ of quadrature nodes per element (see Section~\ref{sec:error_analysis}), is explored by considering the values $P=1,3$ and~$6$.
The large scale of the resulting linear system makes the use of fast methods essential in these cases, for which we resort here to a standard hierarchical matrix compression algorithm~\cite{hackbusch2015hierarchical}. In detail, a cluster tree is constructed so that at most~$128$ quadrature nodes are contained in each leaf box. The interaction matrices between spatially well-separated boxes in the tree are then represented in a compressed format using adaptive cross approximation (ACA) with a relative tolerance~$10^{-8}$.

\begin{figure} \centering
  \includegraphics[width=0.6\textwidth]{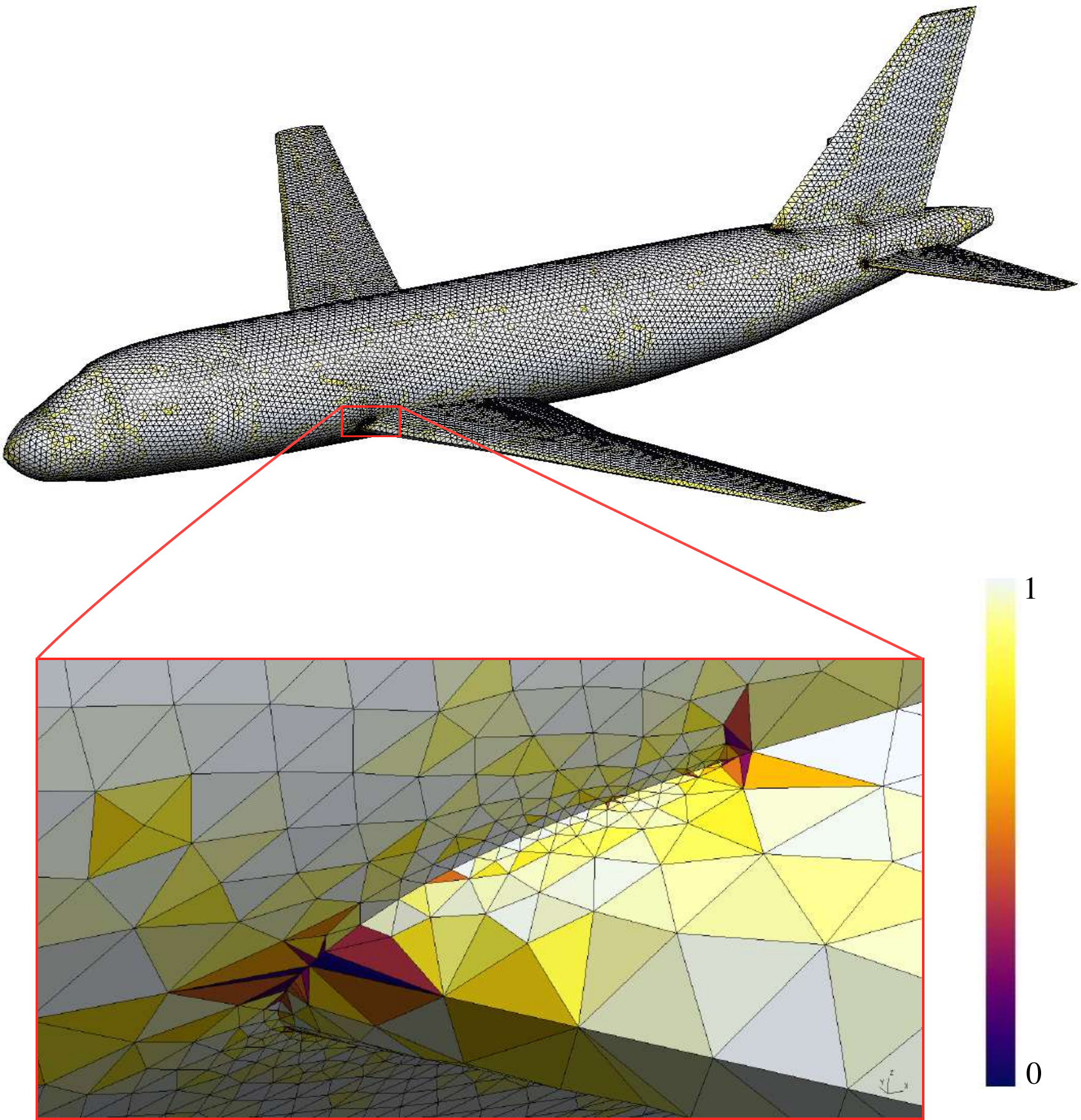}
  \caption{Airplane surface mesh with approximate mesh size $h=0.04$m utilized in some of the numerical examples reported in Table~\ref{tab:A319_error}. The elements are colored by the normalized ratio between radii of inscribed and circumscribed circles, representative of the mesh quality.}
  \label{fig:A319_mesh}
\end{figure}

Table~\ref{tab:A319_error} summarizes the results obtained, where the relative errors are displayed for  various orders and mesh sizes. Unlike the geometries so far considered, we observe a significantly larger number of GMRES iterations needed to reach the target tolerance $10^{-8}$ on the relative residual norm as $P$ increases. This is likely related to the quality of the surface meshes which contains elongated triangles as well as non-uniform patch sizes (see inset in Figure~\ref{fig:A319_mesh}). These results demonstrate the applicability of the proposed methodology to limited-quality complex meshes of engineering interest.\enlargethispage*{1ex}

\begin{remark}
The used mesh sizes correspond to about 10, 20, 40 or 80 patches per wavelength, respectively, so are (except for the coarsest one) finer than what usual engineering solution accuracy would require. Solution accuracy in BIE methods in general is strongly reliant on accurate evaluation of the singular element integrals. The high relative solution errors achieved by the refined discretizations show that our regularization methodology meets this requirement. To attain such solution accuracy levels in turn required the tighter-than-usual $10^{-8}$ tolerance on the GMRES solver, which partly accounts for the observed iteration counts.
\end{remark}

\begin{table}
  \centering
  \begin{tabular}{r|c|c|c|c}
  \toprule
    $P$ & $h$ (m) & $\#$ DOF & $\#$ iter. & error ($\%$)\\
    \hline
    1   & 0.40 & 17,258 & 270 &  4.6220  \\
    \bf{(a) 1}   & \bf{0.20} & \bf{48530} & \bf{429} &  \bf{2.1534}  \\
    1   & 0.10 & 171,694 & 478 &  0.7200 \\
    1   & 0.05 & 651,344 & 742 &  0.1730 \\
    \hline
    \bf{(b) 3}   & \bf{0.40} & \bf{51774} & \bf{526} &  \bf{0.1040} \\
    3   & 0.20 & 145,590 & 743 &  0.0339 \\
    3   & 0.10 & 515,082 & 798 &  0.0034 \\
    \hline
    6   & 0.40 & 103,548 & 932 & 0.0033    \\
    6   & 0.20 & 291,180  & 1203 & 0.0006  \\
    6   & 0.10 & 1,030,164 & 1615 & 0.0001  \\
    \bottomrule
  \end{tabular}
  \caption{Relative errors, measured using~\ref{eq:error_ff}, and numbers of GMRES iterations obtained in the solution of the point-source test problem for the A319 airplane displayed in Figure~\ref{fig:A319_mesh}, for various mesh sizes $h$ and numbers of quadrature nodes per patch $P$. The field errors corresponding to the cases marked with bold letters are displayed in Figure~\ref{fig:A319_exact}.}
  \label{tab:A319_error}
\end{table}

\begin{figure} \centering
  \includegraphics[width=1\textwidth]{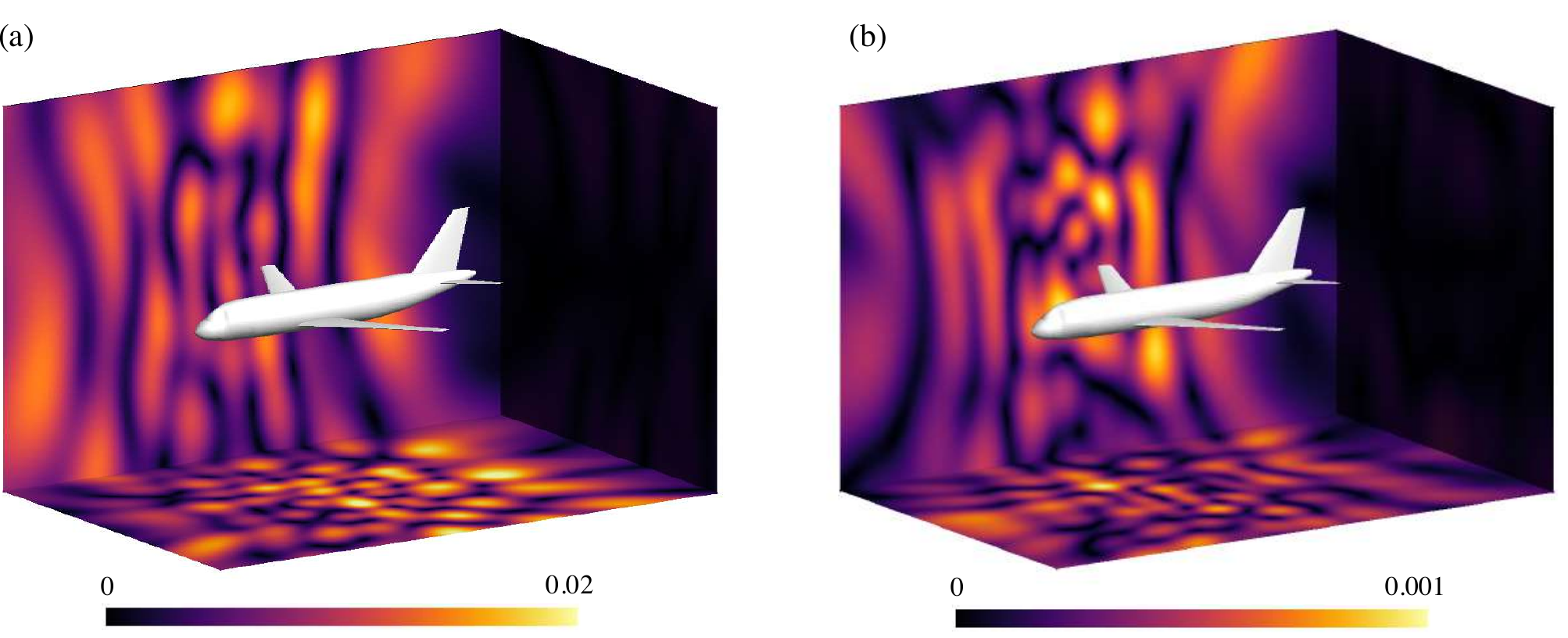}
  \caption{Field errors for $P=1$ and $h=0.2$m (left) and $P=3$, $h=0.4$m (right) corresponding to the entries marked with bold letters in Table~\ref{tab:A319_error}.}
  \label{fig:A319_exact}
\end{figure}

In our final example we solve the  Neumann (sound hard) scattering problem resulting from a plane wave $u^{\inc}(\ner) = \e^{i k \ner\cdot (1/\sqrt{2},0,-1/\sqrt{2})}$ impinging upon the airplane surface. The Dirichlet trace of the scattered field $\gamma_0 u^s$ is obtained by solving~\eqref{eq:direct_BIE} with $g=-\gamma_1 u^{\inc}$. The wavelength is taken to be $\lambda=1$m, which leads to approximately $35$ wavelengths across the airplane. A mesh size of $h=0.1$m is used with $P=3$ quadrature nodes per patch, so that the model is comprised of $N=515,082$ DOFs and features about 10 patches per wavelength. The real part and the magnitude of the total field are shown in Figures~\ref{fig:A319_scattering}(a) and \ref{fig:A319_scattering}(b), respectively. The quality of the numerical solution can be visually assessed by noticing that the normal derivative of the total field approximately vanishes at the surface of the scatterer, which due to the selected planewave direction, is observable on the top of the airplane's fuselage.

\begin{figure} \centering
  \includegraphics[width=1\textwidth]{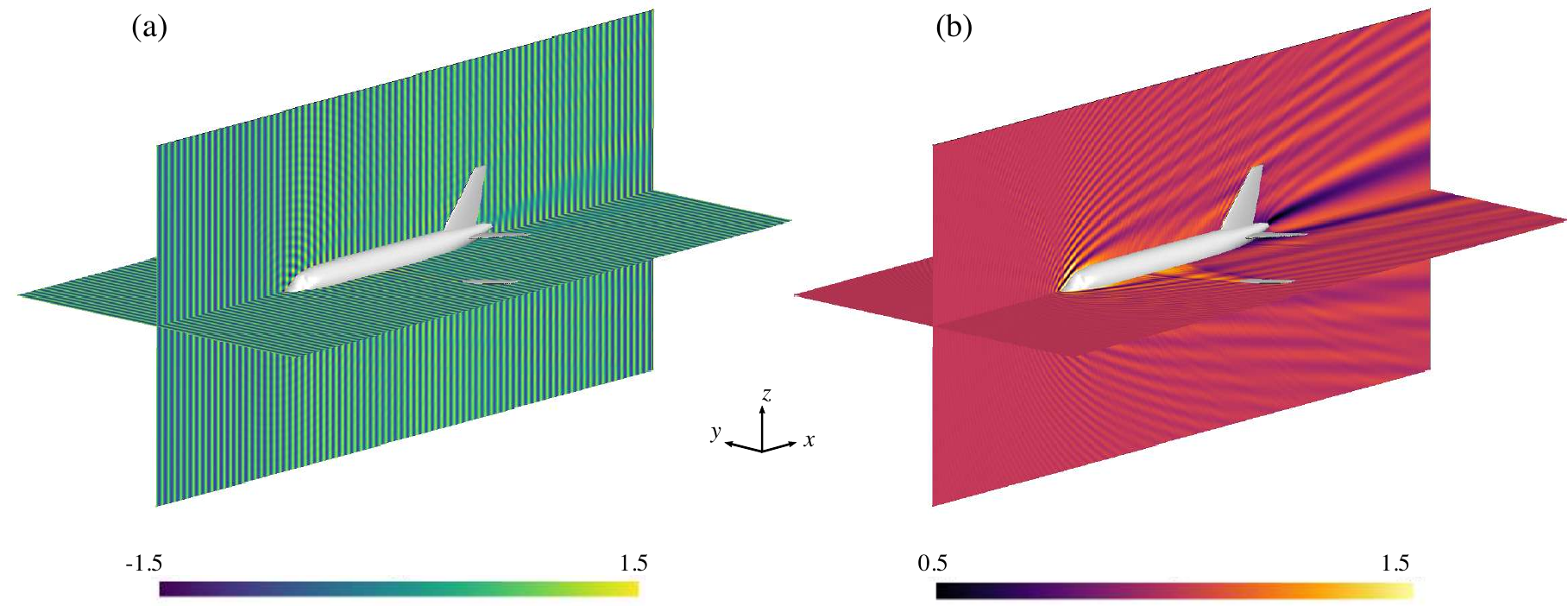}
  \caption{Sound-hard scattering off of an A319 airplane resulting from  an impinging planewave along the $(1,0,0)$ direction. The wavelength is $1$m, the mesh size is $h=0.1$m, and $P=3$ quadrature nodes per patch were used. The left figure shows the real part of the total field, while the right figure shows the magnitude of the total field.}
  \label{fig:A319_scattering}
\end{figure}

\section{Conclusions}

We introduced a general methodology that circumvents some of the technical present in previous high-order density interpolation techniques~\cite{HDI3D,perez2019planewave}. In particular, we developed a novel density interpolant that is effected by means of simple interpolation-collocation conditions that render the overall method both kernel and dimension independent. Its numerical accuracy as well as its compatibility with acceleration algorithms, was demonstrated through a series of numerical examples involving both simple surfaces of academic interest and more complex surfaces of engineering relevance. We are currently working toward extending this novel methodology to three-dimensional Maxwell equations as well as making it capable of handling challenging problems involving open surfaces (e.g., screens and cracks).

\appendix

\section{Free-space Green's functions}\label{app:free_space_Green}

For completeness we provide here the fundamental solutions $G(\br,\br') = \Gcal(\ner-\ner')$ used in this work. For time-harmonic elastodynamics, it is convenient to use the wavenumbers $k_L$ and $k_T$ of compressive (or logitudinal) and shear (or transversal) elastic waves, respectively, defined by\enlargethispage*{7ex}
\begin{equation}
  k^2_{L}=\frac{\rho\omega^2}{\lambda+2 \mu} , \qquad k_{T}=\frac{\rho\omega^2}{\mu}.
\end{equation}
The relevant two-dimensional fundamental solutions are given by:
\begin{equation}
  \Gcal(\bol R)
  = \left\{ \begin{aligned} \ \
 & -\frac{1}{2\pi}\log|\bol R|     && {\rm (Laplace)} \\
 & \frac{\mathrm{i}}{4}H_0^{(1)}(\omega|\bol R|) && {\rm (Helmholtz)} \\
 & \frac{\lambda+3 \mu}{4 \pi \mu(\lambda+2 \mu)}\left\{-\log|\bol R|\bold{I}+\frac{\lambda+\mu}{\lambda+3 \mu} \frac{1}{|\bol R|^{2}}\bol R\bol R^{\top}\right\} && {\rm (elastostatics)} \\
 & \frac{\mathrm{i}}{4\mu}\left\{A \bold{I} + B\bol R\bol R^{\top}\right\} && {\rm (elastodynamics)}
    \end{aligned} \right.
\end{equation}
with the auxiliary functions $A$ and $B$ given by
\begin{align}
  A & =H_{0}^{1}\left(k_{T} |\bol R|\right)-\left[H_{1}^{1}\left(k_{T} |\bol R|\right)-\frac{k_L^2}{k_T^2} H_{1}^{1}\left(k_{L} |\bol R|\right)\right], \\
  B & =-2 A-\left[H_{0}^{1}\left(k_{T} |\bol R|\right)-\frac{k_L^2}{k_T^2} H_{0}^{1}\left(k_{L} |\bol R|\right)\right].
\end{align}
Similarly, the three-dimensional fundamental solutions are given by:
\begin{equation}
  \Gcal(\bol R)
  = \left\{ \begin{aligned} \ \
 & \frac{1}{4\pi|\bol R|} && {\rm (Laplace)}, \\
 & \frac{\e^{i\omega |\bol R|}}{4\pi  |\bol R|} && {\rm (Helmholtz)} \\
 & \frac{\lambda+3 \mu}{8 \pi \mu(\lambda+2 \mu)}\left\{\frac{1}{|\bol R|}\bold{I}+\frac{\lambda+\mu}{\lambda+3 \mu} \frac{1}{|\bol R|^{3}}\bol R\bol R^{\top}\right\} && {\rm (elastostatics)} \\
 & \frac{1}{4 \pi\mu |\bol R|}\left\{A \bold{I} + B\bol R\bol R^{\top}\right\} && {\rm (elastodynamics)} \\
            \end{aligned} \right.
\end{equation}
with the auxiliary functions $A$ and $B$ now given by
\begin{align}
  A & =\left(1+\frac{\mathrm{i}}{k_T|\bol R|}-\frac{1}{k^2_T|\bol R|^{2}}\right) \mathrm{e}^{\mathrm{i} k_T|\bol R|}-\frac{k_L^2}{k_T^2}\left(\frac{\mathrm{i}}{k_L|\bol R|}-\frac{1}{k_L^2|\bol R|^{2}}\right) \mathrm{e}^{\mathrm{i} k_L|\bol R|}       \\
  B & =\left(\frac{3}{k_T^2|\bol R|^{2}}-\frac{3 \mathrm{i}}{k_T|\bol R|}-1\right) \mathrm{e}^{\mathrm{i} k_T|\bol R|}-\frac{k_L^2}{k_T^2}\left(\frac{3}{k^2_L|\bol R|^{2}}-\frac{3 \mathrm{i}}{k_L|\bol R|}-1\right) \mathrm{e}^{\mathrm{i} k_L|\bol R|}
\end{align}

\bibliographystyle{abbrv}
\bibliography{References,marc}

\end{document}